\crefname{equation}{}{}
\crefname{assumption}{Assumption}{Assumptions}
\newcolumntype{Y}{>{\centering\arraybackslash}X}
\newtheorem{theorem}{Theorem}[section]
\newtheorem{corollary}[theorem]{Corollary}
\newtheorem{lemma}[theorem]{Lemma}
\theoremstyle{definition}
\theoremstyle{remark}
\newtheorem{remark}[theorem]{Remark}
\numberwithin{theorem}{section}
\numberwithin{equation}{section}
\numberwithin{table}{section}
\numberwithin{figure}{section}
\newcommand{\R}{\mathbb R}
\newcommand{\N}{\mathbb N}
\renewcommand{\P}{\mathbb P}
\newcommand{\B}{\mathbb B}
\newcommand{\T}{\mathcal T}
\newcommand{\A}{\mathcal A}
\newcommand{\I}{\mathcal I}
\newcommand{\Q}{\mathcal Q}
\newcommand{\U}{\mathbf U}
\newcommand{\V}{\mathbf V}
\newcommand{\W}{\mathcal W}
\newcommand{\Z}{\mathbf Z}
\newcommand{\F}{\mathbf F}
\newcommand{\Tau}{{\boldsymbol\tau}}
\newcommand{\proj}{\mathcal P}
\newcommand{\ufd}{u_{h,\Tau}}
\newcommand{\norm}[2]{\left\lVert#1\right\rVert_{#2}}
\newcommand{\seminorm}[2]{\left |#1\right |_{#2}}
\newcommand{\normH}[1]{\left |#1\right |_{1,h}}
\newcommand{\normL}[1]{\left\lVert#1\right\rVert_{0,h}}
\newcommand{\normE}[1]{\left\lVert#1\right\rVert_{\star}}
\newcommand{\vertiii}[1]{{\left\vert\kern-0.25ex\left\vert\kern-0.25ex\left\vert #1  \right\vert\kern-0.25ex\right\vert\kern-0.25ex\right\vert}}
\renewcommand{\sp}[2]{\left(#1, #2\right)_{L^2(\Omega)}}
\newcommand{\spE}[2]{\left(#1, #2\right)_{L^2(E)}}
\newcommand{\spT}[2]{\left(#1, #2\right)_{L^2(I_n)}}
\newcommand{\jump}[2]{[#1]_{#2}}
\newcommand\ds{\,\text{d}s}
\newcommand\dt{\,\text{d}t}
\newcommand\dx{\,\text{d}x}
\begin{document}

\begin{abstract}
A novel space-time discretization for the (linear) scalar-valued dissipative wave equation is presented. It is a structured approach, namely, the discretization space is obtained tensorizing the Virtual Element (VE) discretization in space with the Discontinuous Galerkin (DG) method in time. As such, it combines the advantages of both the VE and the DG methods. The proposed scheme is implicit and it is proved to be unconditionally stable and accurate in space and time. 
\end{abstract}
	
\title[A DG-VEM method for the dissipative wave equation]{A DG-VEM method for the dissipative wave equation}
\author[P.F.~Antonietti, F.~Bonizzoni, M.~Verani]{Paola Francesca Antonietti$^*$, Francesca Bonizzoni$^*$, Marco Verani$^*$}
\address{${}^*$ MOX - Department of Mathematics, Politecnico di Milano, via Bonardi 9, 20133 Milano, Italy}
\email{paola.antonietti@polimi.it, francesca.bonizzoni@polimi.it, marco.verani@polimi.it}
\thanks{PFA, FB and MV are members of the INdAM Research group GNCS.
FB is partially funded by ``INdAM – GNCS Project", codice CUP\_E53C22001930001.
PFA and MV are partially funded by PRIN2017 n. 201744KLJL and PRIN2020 n. 20204LN5N5, funded by the Italian Ministry of Universities and Research (MUR). PFA is partially funded by Next Generation EU. The present research is part of the activities of “Dipartimento di Eccellenza 2023-2027".}

\maketitle

{\tiny {\bf Keywords.}~
Damped wave equation;
space-time discretization;
tensor product discretization;
virtual element method;
discontinuous Galerkin method;
stability and convergence analysis.
}\\
%

\section{Introduction}

In this paper we propose a space-time Virtual Element/Discontinuous Galerkin method for the (linear) scalar-valued dissipative wave equation in two- and three-dimensions. The method is based on Virtual Element (VE) for space discretization coupled with  discontinuous Galerkin (DG) finite element method for the time integration of the resulting second-order ordinary differential system. The model problem considered in this paper serves as a prototype model for the vector-valued (damped) elastic wave equation typically encountered in geophysical applications.

The Virtual Element method (VEM) has been introduced in \cite{Beirao-Brezzi-Cangiani-Manzini-Marini-Russo2013} for elliptic problems. VEMs for linear and nonlinear elasticity have been developed in \cite{BeiraoBrezziMarini_2013,GainTalischiPaulino_2014,BeiraoLovadinaMora_2015}, whereas VEMs for parabolic, plate bending, Cahn-Hilliard, Stokes,  Helmholtz and Laplace-Beltrami problems have been addressed in \cite{Vacca-Beirao2015,BrezziMarini_2013, AntoniettiBeiraoScacchiVerani_2016,AntoniettiBeiraoMoraVerani_2014,PerugiaPietraRusso_2016,SVEMbasic}.
VEMs for the space discretization of wave-type problems have been addressed in \cite{Vacca2017, Antonietti_et_al_2021, Antonietti_et_al_2022, Dassi_et_al_2022}.

Concerning time-integration of second-order differential systems stemming from space discretization of wave-type problems, classically, time marching schemes are based on (either implicit or explicit) finite differences approaches, e.g., we refer to  \cite{Ve07,Bu08} for a a general overview. On the other hand, space-time finite element methods for second-order hyperbolic equations have been largely developed, thanks to their ability to achieve high-order approximations in both space and time,  to accurately capture steep fronts, and their firm mathematical foundation, where stability and convergence can been proved.

Among space-time finite element methods, we can distinguish between ``structured" and ``unstructured" numerical approaches. In ``structured" approaches, the space-time grid is obtained as tensor product of space and time meshes; a non-exhaustive list of approaches includes \cite{Tezduyar06,StZa17,ErWi19,BaMoPeSc20,Antonietti-Mazzieri-Migliorini2020}. For such formulations, $h-$, $p-$ or $hp-$ adaptive refinement of the space-time domain can be  developed and implemented, see, e.g., \cite{georgoulis2021,CANGIANI2019}.  
On the other hand, ``unstructured" techniques, see, e.g., the seminal works \cite{Hughes88,Idesman07} make use of full space-time meshes, where time is treated as an additional dimension, see \cite{Yin00,AbPeHa06,DoFiWi16} for examples, and the recent contribution~\cite{gomez2022}. 
Among unstructured methods, we also mention Trefftz-type techniques \cite{KrMo16,BaGeLi17,BaCaDiSh18,MoiolaPerugia_2018,PerugiaSchoberlStocker_2020}, in which the numerical solution is looked for in the Trefftz space, and the tent-pitching paradigm \cite{GoScWi17}, in which the space-time elements are progressively built on top of each other in order to grant stability of the numerical scheme. Recently, in \cite{MoiolaPerugia_2018,PerugiaSchoberlStocker_2020} a combination of Trefftz and tent-pitching techniques has been proposed with application to first order hyperbolic problems. A tent-pitching scheme motivated by Friedrichs' theory can be found in \cite{GopalakrishnanMonkSepulveda_2015}.

The DG approach has been extensively used to approximate initial-value problems, where the DG paradigm shows certain advantages with respect to other implicit schemes such as the Johnson's method, see e.g. \cite{JOHNSON1993,ADJERID2011}. Indeed, thanks to the DG paradigm, the solution  at time-slab $[t_n,t_{n+1}]$ depends only on the solution at the time instant $t_n^-$. The use of DG methods in both space and time dimensions leads to a fully DG space-time formulation such as e.g., \cite{Delfour81,Vegt2006,WeGeSc2001,Antonietti-Mazzieri-Migliorini2020}.

Finally, a typical approach for second order differential equations consists in reformulating them as a system of first order hyperbolic equations. Thus, velocity is considered as an additional problem's unknown, yielding to doubling the dimension of the final linear system, cf. \cite{Delfour81,Hughes88,FRENCH1993,JOHNSON1993,ThHe2005,AntoniettiMazzieriMigliorini_2023}.

In this work we present a novel structured VEM/DG formulation that combines the VE advantages for space discretization together with those of the DG methods for time integration. The obtained scheme is implicit, unconditionally stable and provides an accurate approximation with respect to both space and time discretization errors.
Throughout the paper we will use the notation $x\lesssim y$ with the meaning $x\leq c y$, with $c$ positive constant independent of the discretization parameters. 

The paper is organized as follows. In Section~\ref{sec:problem_setting} we introduce the problem; its semi-discrete VEM approximation is discussed in Section~\ref{sec:VEM_space_discretization}, and in Section~\ref{sec:DG_time} we present DG discretization in time. Section~\ref{sec:VEM-DG} introduces the fully-discrete VEM-DG formulation and studies its well-posedness and stability, whereas in Section~\ref{sec:error_analysis} we prove \textit{a priori} error estimates in a suitable energy norm. Finally, in Section~\ref{sec:numerical_tests}, the method is validated through several numerical experiments in two dimensions (in space).

\section{Problem setting}
\label{sec:problem_setting}

Let $\Omega\subset\R^d$, $d=2,3$, be an open bounded convex polygonal domain. The problem we are interested reads as follows: for $T>0$, find $u\colon \Omega\times (0,T]\rightarrow\R$ such that
\begin{equation}
	\label{eq:pde}	
	\left\{\begin{array}{ll}
		u_{tt}+ \nu u_t-\Delta u=f,&\text{in }\Omega\times (0,T],\\
		u=0, &\text{on }\partial\Omega\times (0,T],\\
		u(\cdot,0)=u_0,\, u_t(\cdot,0)=z_0,&\text{in }\Omega,
	\end{array}\right.
\end{equation}
where $\nu\in\R^+$ is the dissipation coefficient, $f$ is the external force, $u_0$ and $z_0$ are the initial data, and $u_t,\, u_{tt}$ denote the first and second order time derivative of the unknown function $u$, respectively. Note that, by little modifications, our analysis extends to the case of (positive) bounded dissipation function $\nu\in L^\infty(\Omega)$.
By standard arguments, we derive the variational formulation of~\eqref{eq:pde}: given $f\in L^2(\Omega\times(0,T))$ and $u_0,\, z_0\in H^1_0(\Omega)$, find $u\in C^0\left(0,T; H^1_0(\Omega)\right)\cap C^1\left(0,T;L^2(\Omega)\right)$ such that, for all $v\in H^1_0(\Omega)$ and for a.e. $t\in (0,T)$
\begin{equation}
	\label{eq:pde_weak}
	\sp{u_{tt}(t)}{v} + \nu \sp{u_{t}(t)}{v} + a(u(t),v) = \sp{f(t)}{v},
\end{equation}
supplemented with the initial conditions $u(\cdot,0)=u_0$, $u_t(\cdot,0)=z_0$, where $\sp{\bullet}{\bullet}$ denotes the $L^2(\Omega)$-inner product, and $a\colon H^1_0(\Omega)\times H^1_0(\Omega)\rightarrow\R$ is defined as $a(w,v)=\sp{\nabla w}{\nabla v}$. 
Following \cite{Duvant-Lions2012} it is possible to prove existance and uniqueness of the solution to problem~\eqref{eq:pde_weak}.

\section{Space discretization based on the VEM}
\label{sec:VEM_space_discretization}

In this section we apply the VEM to discretize problem~\eqref{eq:pde_weak} in space. In particular, we follow~\cite{Vacca2017}, where the VE space discretization of~\eqref{eq:pde_weak} with damping $\nu=0$ is introduced. We start recalling the ingredients of the VEM that we will need, with focus on the two-dimensional case (the three-dimensional case being analogous but more technical). For a complete presentation, we refer to~\cite{Beirao-Brezzi-Cangiani-Manzini-Marini-Russo2013, Ahmad-Alsaedi-Brezzi-Marini-Russo2013, Beirao-Brezzi-Marini-Russo2013}.

\subsection{VE space}
\label{sec:VE_space}

Let $\T_h$ be a (not necessarily conforming) decomposition of $\Omega$ into $n_P$ non-overlapping (open) polygons $E_\ell$ with flat faces, i. e., $\bar\Omega=\cup_{\ell=1}^{n_P} \bar E_\ell$ with $E_\ell\cap E_{\ell'}=\emptyset$ for $\ell\neq\ell'$. Let $h_E\coloneqq\operatorname{diam}(E)$ and $h\coloneqq\max_{E\in\T_h}h_E$. In the following, we assume that (i) each element $E\in\T_h$ is star-shaped with respect to a ball of radius $\gamma\, h_E$; (ii) the distance between any two vertices of $E$ is larger than $c\, h_E$, for $\gamma, c>0$ constants independent of $h$ and $E$.

Let $k\in\N$ denote the polynomial degree of the method. For any fixed $E\in\T_h$, we introduce the following notations:
\begin{enumerate}[label=(\roman*)]
\item
$\P_k(E)$ is the set of polynomials on $E$ of total degree less or equal to $k$;
\item
$\B(\partial E)\coloneqq\{v\in C^0(\partial E)\ \textrm{s.t. }v|_e\in\P_k(e)\ \textrm{for all edge }e\subset\partial E\}$;
\item 
$\Pi^{\nabla,E}\colon H^1(E)\rightarrow \P_k(E)$ is the energy projection operator defined by
\begin{equation}
	\label{eq:PiNabla}
	a^E(q_k,w-\Pi^{\nabla,E}w)=0\quad\forall\, q_k\in\P_k(E),	
\end{equation}
where $a^E\colon H^1(E)\times H^1(E)\rightarrow\R$ is the local counterpart of the bilinear form $a(\bullet,\bullet)$, namely, $a^E(v,w)=\int_E\nabla v\cdot\nabla w\, \dx$ for all $v,\, w\in H^1(E)$, and $a(v,w)=\sum_{E\in\T_h}a^E(v,w)$ for all $v,\, w\in H^1(\Omega)$. To fix the constant in the definition~\eqref{eq:PiNabla} of $\Pi^{\nabla,E}w$, we further require $\int_E\Pi^{\nabla,E}w\, \dx=0$;
\item
$\Pi^{0,E}\colon L^2(E)\rightarrow \P_k(E)$ is the $L^2$-orthogonal projection operator defined by
\begin{equation}
	\label{eq:Pi0}
	\spE{q_k}{w-\Pi^{0,E}w}=0\quad\forall\, q_k\in\P_k(E).
\end{equation}
There exists a positive constant $C$ such that, for all $u\in H^{s+1}(E)$ with $0\leq s\leq k$, there holds
\begin{gather}
	\label{eq:proj_approx}
	\norm{u-\Pi^{0,E} u}{L^2(E)}\leq C h_E^{s+1} \seminorm{u}{H^{s+1}(E)},
\end{gather}
where $h_E$ is the diameter of the element $E$.
(See \cite{Brenner-Scott2008}).
\end{enumerate}

We can now introduce the (local) enhanced VE space
\begin{equation}
	\label{eq:VEM_space_local}
	W^E_h\coloneqq\left\{ w\in V^E_h\ \textrm{s.t. }\spE{w-\Pi^{\nabla,E}w}{q}=0\ \textrm{for all }q\in\P_k(E)/\P_{k-2}(E)\right\},
\end{equation}
where $V^E_h$ denotes the (local) augmented VE space
\[
V^E_h\coloneqq\left\{w\in H^1(E)\ \textrm{s.t. }w\in\B_k(\partial E)\ \textrm{and }\Delta w\in\P_k(E)\right\},
\]
and $\P_k(E)/\P_{k-2}(E)$ denotes the set of polynomials of total degree $k$ on $E$ that are $L^2$-orthogonal to all polynomials of total degree $k-2$ on $E$ (with the convention $\P_{-1}\coloneqq \emptyset$).  Note, in particular, that $\P_k(E)\subset W^E_h(E)$.
The space $W^E_h$ is equipped with the following set of (local) degrees of freedom (DOFs):
\begin{itemize}
\item
nodal values at all $n_E$ vertices of the polygon $E$;
\item
nodal values at $k-1$ Gauss-Lobatto quadrature points of every edge $e\in\partial E$;
\item 
(for $k\geq 2$) moments up to order $k-2$ in $E$, namely, for $w\in W^E_h$,
\[
\spE{w}{q_{k-2}}\quad\textrm{for all }q_{k-2}\in\P_{k-2}.
\]
\end{itemize}
In particular, $\dim(W^E_h)=n_E k + \frac{k(k-1)}{2}$. It is important to notice that both the energy projection and the $L^2$-orthogonal projection operators are computable only on the basis of degrees of freedom above.

The global enhanced VE space is given by
\begin{equation}
	\label{eq:VEM_space_global}
	W_h\coloneqq\left\{v\in H^1_0(\Omega)\ \textrm{s.t. }v|_E\in W^E_h\ \textrm{for all } E\in\T_h \right\}.
\end{equation}
It is equipped with the following set of (global) DOFs:
\begin{itemize}
	\item
	nodal values at all $n_V$ vertices of $\T_h$;
	\item
	nodal values at $k-1$ Gauss-Lobatto quadrature points of all $n_e$ edges of $\T_h$;
	\item (for $k\geq 2$) moments up to order $k-2$ in all $n_P$ polygons of $\T_h$, namely, for $w\in W_h$,
	\[
	\spE{w}{q_{k-2}}\quad\textrm{for all }q_{k-2}\in\P_{k-2}(E);
	\]
\end{itemize}
and it has dimension $\dim(W_h)=n_V + (k-1)n_e+n_P\frac{(k-1)(k-2)}{2}$. In the following, we set $N_h\coloneqq\dim(W_h)$.

Given a smooth enough function $u$, we define its VE interpolant $u_I$ as the function in $W_h$ verifying, for all $j=1,\ldots, N_h$
\begin{equation}
	\label{eq:VE-interp}
	\textrm{dof}_j(u) = \textrm{dof}_j(u_I), 
\end{equation}
where $\textrm{dof}_j$ is the operator associating its argument to the $j$-th (global) DOF. In~\cite{Beirao-Brezzi-Cangiani-Manzini-Marini-Russo2013} it is shown that there exists a positive constant $C$ such that, for all $E\in \T_h$, there holds
\begin{equation}
	\label{eq:inter-estimate}
	\norm{u-u_I}{L^2(E)}+h\seminorm{u-u_I}{H^1(E)}
	\leq C h^{k+1} \seminorm{u}{H^{k+1}(E)}.
\end{equation}

\subsection{VE bilinear forms}

Based on the classical observation that, given an arbitrary pair of VE functions $v_h,\, w_h\in W^E_h$, the quantities $a^E(v_h,w_h),\, \spE{v_h}{w_h}$ can not be computed, we introduce computable approximations $a^E_h,\, m_h^E\colon W^E_h\times W^E_h\rightarrow\R$, given by
\begin{gather}
	\label{eq:VEM_bilinear_forms}
	\begin{aligned}
		a^E_h(v_h,w_h)&\coloneqq a^E(\Pi^{\nabla,E}v_h,\Pi^{\nabla,E}w_h)
			+ S^E\left((Id-\Pi^{\nabla,E})v_h,(Id-\Pi^{\nabla,E})w_h \right),\\
		m^E_h(v_h,w_h)&\coloneqq\spE{\Pi^{0,E}v_h}{\Pi^{0,E}w_h)}
			+ R^E\left((Id-\Pi^{0,E})v_h,(Id-\Pi^{0,E})w_h \right),
	\end{aligned}
\end{gather}
where $S^E,\, R^E\colon W^E_h\times W^E_h\rightarrow\R$ are symmetric stabilizing bilinear forms fulfilling, for all $v_h\, w_h\in W^E_h$ with $\Pi^{\nabla,E}v_h=0$, $\Pi^{0,E}w_h=0$,
\begin{gather}
	\begin{aligned}
		a^E(w_h,w_h)\lesssim &R^E(w_h,w_h) \lesssim a^E(w_h,w_h),\\
		\spE{w_h}{w_h}\lesssim &S^E(w_h,w_h)\lesssim \spE{w_h}{w_h}.
	\end{aligned}
\end{gather} 
In particular, the local virtual bilinear forms defined in~\eqref{eq:VEM_bilinear_forms} fulfill the $k$-consistency and stability properties, namely, for all $q_h\in \P_k(E)$ and $w_h\in W^E_h$
\begin{gather}
	\label{eq:consistency}
	a^E_h(q_h,w_h)=a^E(q_h,w_h),\quad
	m^E_h(q_h,w_h)=m^E(q_h,w_h),
\end{gather}
and 
\begin{gather}
	\label{eq:stability}
	\begin{aligned}
	a^E(w_h,w_h)\lesssim &a^E_h(w_h,w_h)\lesssim a^E(w_h,w_h),\\
	\spE{w_h}{w_h}\lesssim &m^E_h(w_h,w_h) \lesssim \spE{w_h}{w_h}.
	\end{aligned}
\end{gather}

\begin{remark}
In the numerical experiments, we will approximate the stabilizing forms $S^E(\bullet,\bullet)$, $R^E(\bullet,\bullet)$ with the computable bilinear forms $S^E_h(\bullet,\bullet)$, $R^E_h(\bullet,\bullet)$ defined as follows:
\begin{gather*}
	\begin{aligned}
	S_h^E(v_h,w_h)&\coloneqq
	\sum_{r=1}^{N_E} \textrm{DOF}_r\Big( (\textrm{Id}-\Pi^{\nabla,E})v_h \Big)
	\textrm{DOF}_r\Big( (\textrm{Id}-\Pi^{\nabla,E})w_h \Big),\\
	R_h^E(v_h,w_h)&\coloneqq
	|E|\sum_{r=1}^{N_E} \textrm{DOF}_r\Big( (\textrm{Id}-\Pi^{0,E})v_h \Big)
	\textrm{DOF}_r\Big( (\textrm{Id}-\Pi^{0,E})w_h \Big),
	\end{aligned}
\end{gather*}
where $|E|$ is the area of the polygon $E$, $N_E\coloneqq\dim(W^E_h)$ and $\{\textrm{DOF}_r\}_{r=1}^{N_E}$ denotes the set of local DOFs introduced in Section~\ref{sec:VE_space}.
\end{remark}

The global virtual bilinear forms $a_h,\, m_h\colon W_h\times W_h\rightarrow\R$ are then defined, for all $v_h\, w_h\in W^E_h$, as
\begin{gather*}
	a_h(v_h,w_h)\coloneqq\sum_{E\in\T_h} a^E_h(v_h,w_h),\quad
	m_h(v_h,w_h)\coloneqq\sum_{E\in\T_h} m^E_h(v_h,w_h).
\end{gather*}
From \eqref{eq:stability} it follows that the global virtual bilinear forms are continuous, namely, 
\begin{gather}
	\label{eq:cont_VEM_forms}
	\begin{aligned}
	a_h(v,w)&\lesssim\norm{\nabla v}{L^2(\Omega)}\norm{\nabla w}{L^2(\Omega)},\\
	m_h(v,w)&\lesssim\norm{v}{L^2(\Omega)}\norm{ w}{L^2(\Omega)}.
	\end{aligned}
\end{gather}
We define the discrete $H^1$-seminorm and the discrete $L^2$-norm as follows
\begin{gather}
	\label{eq:VEM-norms}
	\normH{\bullet}^2\coloneqq a_h(\bullet,\bullet),\quad
	\normL{\bullet}^2\coloneqq m_h(\bullet,\bullet).
\end{gather}
Combining \eqref{eq:stability} and \eqref{eq:cont_VEM_forms}, we find that, for all $v_h,w_h\in W_h$, there holds
\begin{gather}
	\label{eq:CS-mh}
	m_h(v,w)\lesssim \normL{v_h}\normL{w_h}.
\end{gather}

\subsection{VE semi-discrete variational problem}

We define the VE approximation to the loading term $f(t)$ for all $t\in (0,T)$ as
\begin{gather*}
	f_h(t)|_E\coloneqq \Pi^{0,E} f(T) \textrm{ for all }E\in \T_h,
\end{gather*}
and the VE approximated initial conditions $u_{0,h},\, z_{0,h}$ as the VE interpolants of $u_0,\, z_0$, specifically, $u_{h,0},\, z_{h,0}$ are piecewise polynomials of degree less than or equal to $k$, with evaluations of DOFs coinciding with those of $u_0,\, z_0$ (see~\eqref{eq:VE-interp}).

The VE semi-discrete approximation to~\eqref{eq:pde_weak} reads: find $u_h\in C^0\left(0,T; W_h\right)\cap C^1\left(0,T;W_h\right)$ such that, for all $v_h\in W_h$ and for a.e. $t\in (0,T)$
\begin{equation}
	\label{eq:pde_weak_sd}
	m_h(u_{h,tt}(t), v_h ) +  \nu m_h(u_{h,t}(t), v_h ) + a_h(u_h(t),v_h) = \sp{f_h(t)}{v_h},
\end{equation}
supplemented with the initial conditions $u_h(\cdot,0)=u_{h,0}$, $u_{h,t}(\cdot,0)=z_{h,0}$.
By classical arguments, it is possible to show that problem \eqref{eq:pde_weak_sd} admits a unique solution $u_h(t)$ (see Section~\ref{thm:VEM_wp}).
Moreover, there holds the following stability result.
\begin{theorem}
\label{thm:VEM_stability}
Let $f_h\in L^2(0,T;L^2(\Omega))$. Then, the unique solution $u_h$ to problem~\eqref{eq:pde_weak_sd} fulfills the following inequality, for all $t\in(0,T)$
\begin{equation}
	\label{eq:VEM_stability}
	\normH{u_h(t)}^2 + \normL{u_{h,t}(t)}^2 
	\lesssim \normH{u_{h,0}}^2 + \normL{z_{h,0}}^2 +\norm{f_h}{L^2(0,t;L^2(\Omega))}^2.
\end{equation}
\end{theorem}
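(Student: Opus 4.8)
The plan is to derive a discrete energy identity by testing the semi-discrete equation against the discrete velocity, and then close the estimate by a Grönwall-type argument. Since $m_h$ is symmetric and positive definite, problem~\eqref{eq:pde_weak_sd} is equivalent to a finite-dimensional linear second-order ODE system $M\ddot U+\nu M\dot U+AU=F$ with $F\in L^2(0,T)$; hence the coefficient vector satisfies $U\in H^2(0,T)$, which makes all the time-differentiation manipulations below rigorous in an a.e./absolutely continuous sense.

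First I would fix $t\in(0,T)$ and take $v_h=u_{h,t}(t)$ in~\eqref{eq:pde_weak_sd}. Using the symmetry of the bilinear forms $m_h$ and $a_h$ together with the definitions~\eqref{eq:VEM-norms} of the discrete norms, the first and third terms are exact time derivatives, namely
\[
m_h(u_{h,tt},u_{h,t})=\tfrac12\tfrac{d}{dt}\normL{u_{h,t}}^2,\qquad
a_h(u_h,u_{h,t})=\tfrac12\tfrac{d}{dt}\normH{u_h}^2 .
\]
Introducing the discrete energy $E(t):=\normL{u_{h,t}(t)}^2+\normH{u_h(t)}^2$, this produces the energy balance
\[
\tfrac12\tfrac{d}{dt}E(t)+\nu\normL{u_{h,t}(t)}^2=\sp{f_h(t)}{u_{h,t}(t)}.
\]
Because $\nu>0$, the dissipation term on the left is nonnegative and may be discarded. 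For the forcing term I would apply the Cauchy--Schwarz inequality in $L^2(\Omega)$ and then the norm equivalence $\norm{w_h}{L^2(\Omega)}\lesssim\normL{w_h}$, which is exactly the left-hand stability bound in~\eqref{eq:stability}, to get
\[
\sp{f_h(t)}{u_{h,t}(t)}\le\norm{f_h(t)}{L^2(\Omega)}\norm{u_{h,t}(t)}{L^2(\Omega)}
\lesssim\norm{f_h(t)}{L^2(\Omega)}\,\normL{u_{h,t}(t)} .
\]

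Finally I would close the estimate. Using Young's inequality on the last bound gives $\tfrac{d}{dt}E(t)\lesssim\norm{f_h(t)}{L^2(\Omega)}^2+E(t)$, and integrating from $0$ to $t$ and invoking Grönwall's lemma yields $E(t)\lesssim e^{ct}\big(E(0)+\norm{f_h}{L^2(0,t;L^2(\Omega))}^2\big)$; since $t\le T$, the exponential factor is a constant independent of $h$. Recalling that $E(0)=\normL{z_{h,0}}^2+\normH{u_{h,0}}^2$ then gives precisely~\eqref{eq:VEM_stability}. (Alternatively, one can avoid Grönwall entirely: bounding $\sp{f_h}{u_{h,t}}\lesssim\norm{f_h}{L^2(\Omega)}\sqrt{E(t)}$ leads to $\tfrac{d}{dt}\sqrt{E}\lesssim\norm{f_h}{L^2(\Omega)}$, and integrating in time and using Cauchy--Schwarz produces the same result with an explicit $T$-dependent constant.) The only genuinely delicate point is the transition between the continuous $L^2(\Omega)$-norm and the discrete norm $\normL{\cdot}$ used to absorb the source term, which is where the stability bounds~\eqref{eq:stability} are essential; everything else follows routinely from the symmetry of the discrete forms and the nonnegativity of the damping.
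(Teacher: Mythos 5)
Your proof is correct, and its starting point (testing with $v_h=u_{h,t}$ to obtain the energy identity) coincides with the paper's; however, the way you close the estimate is genuinely different. The paper integrates the identity in time and \emph{keeps} the dissipation term: using \eqref{eq:stability} it bounds $\nu\int_0^t m_h(u_{h,t},u_{h,t})\,\ds\gtrsim\nu\norm{u_{h,t}}{L^2(0,t;L^2(\Omega))}^2$, estimates the forcing by Cauchy--Schwarz and Young with a free parameter $\varepsilon$, and absorbs the resulting term $\tfrac{\varepsilon}{2}\norm{u_{h,t}}{L^2(0,t;L^2(\Omega))}^2$ into that dissipation by taking $\varepsilon$ small relative to $\nu$. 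No Gr\"onwall lemma is invoked, and the resulting constant is independent of $T$; the price is that it degenerates as $\nu\to0$, so the paper's argument genuinely needs the damping. You instead discard the (nonnegative) dissipation term and close with Gr\"onwall, which yields a constant growing like $e^{cT}$ (or linearly in $T$ with your alternative square-root argument) but uniform in $\nu\ge 0$: your proof covers the undamped case $\nu=0$ as well, which is worth noting since the paper's own validation experiment in Section~\ref{sec:numerical_tests} is run with $\nu=0$ and is explicitly declared outside the scope of the theory. Both constants are admissible under the paper's convention that hidden constants need only be independent of the discretization parameters, so both proofs establish \eqref{eq:VEM_stability}; they trade $T$-robustness against $\nu$-robustness. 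Finally, your preliminary observation that the coefficient vector lies in $H^2(0,T)$, so that the a.e.\ differentiation of the energy is licit, addresses a point the paper passes over in silence.
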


\begin{proof}
Choosing the test function $v_h=u_{h,t}(t)$ in problem~\eqref{eq:pde_weak_sd}, and integrating in time between $0$ and $t$, we find
\begin{gather*}
	\begin{aligned}
	&\int_0^t m_h(u_{h,tt}(s), u_{h,t}(s) )\, \ds 
	+ \nu  \int_0^t m_h(u_{h,t}(s), u_{h,t}(s) )\, \ds 
	+ \int_0^t a_h(u_h(s),u_{h,t}(s) )\, \ds\\
	&\qquad= \int_0^t \sp{f_h(s)}{u_{h,t}(s)}\, \ds.
	\end{aligned}
\end{gather*}
Observe that
\begin{gather*}
    \begin{aligned}
	\int_0^t m_h(u_{h,tt}(s), u_{h,t}(s) )\, \ds 
	&= \frac{1}{2} \int_0^t \frac{d}{ds} m_h(u_{h,t}(s), u_{h,t}(s) ) \, \ds \\
	&= \frac{1}{2} \left( \normL{u_{h,t}(t)}^2 - \normL{z_{h,0}}^2\right),
    \end{aligned}
\end{gather*}
and, analogously,
\begin{gather*}
	\int_0^t a_h(u_{h}(s), u_{h,t}(s) )\, \ds 
	= \frac{1}{2} \int_0^t \frac{d}{ds} a_h(u_{h}(s), u_{h}(s) ) \, \ds 
	= \frac{1}{2} \left( \normH{u_{h}(t)}^2 - \normH{u_{h,0}}^2\right).
\end{gather*}
Moreover, using~\eqref{eq:stability} we find that
\begin{gather*}
	\int_0^t m_h(u_{h,t}(s), u_{h,t}(s) )\, \ds 
	\gtrsim \int_0^t \norm{u_{h,t}(s)}{L^2(\Omega)}^2\, \ds
	\gtrsim\norm{u_{h,t}}{L^2(0,t;L^2(\Omega))}^2.
\end{gather*}
Finally, using the Cauchy-Schwarz and Young's inequalities, we find
\begin{gather*}
	\begin{aligned}
	\int_0^t \sp{f_h(s)}{u_{h,t}(s)}\, \ds
	&\leq \norm{f_h(t)}{L^2(0,t;L^2(\Omega))} \norm{u_{h,t}}{L^2(0,t;L^2(\Omega))}\\
	&\leq \frac{1}{2\varepsilon} \norm{f_h(t)}{L^2(0,t;L^2(\Omega))}^2 
	+ \frac{\varepsilon}{2}\norm{u_{h,t}}{L^2(0,t;L^2(\Omega))}.
	\end{aligned}
\end{gather*}
Hence, \eqref{eq:VEM_stability} follows for $\varepsilon$ sufficiently small. 
\end{proof}

\begin{remark}
\label{rem:VEM_stability_q}
Let the further assumption $u_h\in H^{q}(0,T;H^1_0(\Omega)$ for $q\geq 2$ hold, and denote with $\partial^q_t u_h$ the $q$-th time derivative of $u_h$ (which still fulfills homogeneous Dirichlet boundary conditions on $\partial\Omega$). Then, $w_h\coloneqq \partial^q_t u_h$ satisfies, for all $v_h\in W_h$ and for a.e. $t\in (0,T)$,
\begin{equation*}
	m_h(w_{h,tt}(t), v_h ) +  \nu m_h(w_{h,t}(t), v_h ) + a_h(w_h(t),v_h) = \sp{\partial^q_t f_h(t)}{v_h},
\end{equation*}
coupled with initial conditions $w_h(0)=w_{h,t}(0)=0$.
Theorem~\ref{thm:VEM_stability} then states that
 \begin{equation}
	\label{eq:VEM_stability_q}
	\normH{w_h(t)}^2 + \normL{w_{h,t}(t)}^2 
	 \lesssim \norm{f_h^{(q)}}{L^2(0,t;L^2(\Omega))}^2.
\end{equation}

\end{remark}

\subsection{Error analysis}

To perform the error analysis for the semi-discrete problem, we need to introduce the modified energy projection $\proj^\nabla\colon H^1_0(\Omega)\rightarrow W_h$, where, for any $u\in H^1_0(\Omega)$, $\proj^\nabla u\in W_h$ satisfies, for all $v_h\in W_h$
\begin{gather}
	\label{eq:proj_nabla}
	a_h(\proj^\nabla u,v_h) = a_h(u,v_h).
\end{gather}
Similarly, we define the the modified $L^2$-projection $\proj^0\colon L^2(\Omega)\rightarrow W_h$, where, for any $u\in L^2(\Omega)$, $\proj^0 u\in W_h$ satisfies, for all $v_h\in W_h$
\begin{gather}
	\label{eq:proj_0}
	m_h(\proj^0 u,v_h) = m_h(u,v_h).
\end{gather}
We recall the following approximation results, whose proofs can be found in \cite{Vacca-Beirao2015} and \cite{Vacca2017}, respectively.
\begin{lemma}
\label{lem:proj_nabla_estimate}
For all $u\in H^1_0(\Omega)\cap H^{k+1}(\Omega)$, there holds
\begin{align}
	\label{eq:proj_nabla_estimate1}
	\seminorm{u-\proj^\nabla u}{H^1(\Omega)}&\lesssim  h^k\seminorm{u}{H^{k+1}(\Omega)}, \\
	\label{eq:proj_nabla_estimate2}
	\norm{u-\proj^\nabla u}{L^2(\Omega)}&\lesssim h^{k+1}\seminorm{u}{H^{k+1}(\Omega)}.
\end{align}
\end{lemma}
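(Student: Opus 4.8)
The plan is to establish \eqref{eq:proj_nabla_estimate1} by a Galerkin-orthogonality (Céa-type) argument exploiting the coercivity and $k$-consistency of the discrete form $a_h$, and then to deduce the $L^2$ bound \eqref{eq:proj_nabla_estimate2} by Aubin--Nitsche duality. Throughout, $u_I\in W_h$ denotes the VE interpolant \eqref{eq:VE-interp} of $u$, and $\Pi^{\nabla,E}u\in\P_k(E)$ the elementwise energy projection.

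For \eqref{eq:proj_nabla_estimate1} I would first split $\seminorm{u-\proj^\nabla u}{H^1(\Omega)}\le\seminorm{u-u_I}{H^1(\Omega)}+\normH{u_I-\proj^\nabla u}$, where the equivalence of $\normH{\bullet}$ with the (broken) $H^1$-seminorm on $W_h$ follows from the stability bounds \eqref{eq:stability}. The first term is controlled by the interpolation estimate \eqref{eq:inter-estimate}, so it only remains to bound $e_h\coloneqq u_I-\proj^\nabla u\in W_h$. By coercivity, $\normH{e_h}^2=a_h(e_h,e_h)=a_h(u_I,e_h)-a_h(\proj^\nabla u,e_h)$, and I would replace $a_h(\proj^\nabla u,e_h)$ using the defining relation \eqref{eq:proj_nabla}; inserting $\Pi^{\nabla,E}u$ elementwise and using consistency \eqref{eq:consistency} (which gives $a^E_h(\Pi^{\nabla,E}u,e_h)=a^E(\Pi^{\nabla,E}u,e_h)$), one is left with the remainder $\sum_{E\in\T_h}\big(a^E_h(u_I-\Pi^{\nabla,E}u,e_h)-a^E(u-\Pi^{\nabla,E}u,e_h)\big)$. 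Each group of terms is estimated by Cauchy--Schwarz together with the continuity of $a^E_h$ (again \eqref{eq:stability}): the factor multiplying $\normH{e_h}$ is bounded, after a triangle inequality, by $\seminorm{u-u_I}{H^1(E)}+\seminorm{u-\Pi^{\nabla,E}u}{H^1(E)}$, i.e. by \eqref{eq:inter-estimate} and standard polynomial approximation of $\Pi^{\nabla,E}u$. Dividing by $\normH{e_h}$ gives $\normH{e_h}\lesssim h^{k}\seminorm{u}{H^{k+1}(\Omega)}$, which combined with \eqref{eq:inter-estimate} yields \eqref{eq:proj_nabla_estimate1}.

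For \eqref{eq:proj_nabla_estimate2} I would use duality: let $\phi\in H^1_0(\Omega)$ solve $a(\phi,v)=\sp{u-\proj^\nabla u}{v}$ for all $v\in H^1_0(\Omega)$; since $\Omega$ is convex, $H^2$ elliptic regularity gives $\norm{\phi}{H^2(\Omega)}\lesssim\norm{u-\proj^\nabla u}{L^2(\Omega)}$. Taking $v=u-\proj^\nabla u$ and inserting $\phi_I$ yields $\norm{u-\proj^\nabla u}{L^2(\Omega)}^2=a(u-\proj^\nabla u,\phi-\phi_I)+a(u-\proj^\nabla u,\phi_I)$; the first term is bounded by \eqref{eq:proj_nabla_estimate1} times the $O(h)$ interpolation bound for $\phi-\phi_I$ from \eqref{eq:inter-estimate}, producing the extra power of $h$, while the second is handled exactly as in the first part through \eqref{eq:proj_nabla} and \eqref{eq:consistency}, the variational-crime remainder again costing one additional power of $h$.

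The main obstacle is not the abstract skeleton but the careful tracking of the two \emph{variational crimes}---replacing $u$ by $u_I$ and the exact form $a$ by its computable surrogate $a_h$---which is what forces the repeated insertion of $\Pi^{\nabla,E}u$ so that consistency \eqref{eq:consistency} can be invoked, with the nonpolynomial remainders absorbed by \eqref{eq:inter-estimate} and polynomial approximation. A secondary technical point is the duality step, which hinges on the $H^2$ regularity of the dual solution, available here precisely because $\Omega$ is assumed convex.
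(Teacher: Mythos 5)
The paper does not actually prove this lemma: it is imported from the literature (``We recall the following approximation results, whose proofs can be found in \cite{Vacca-Beirao2015} and \cite{Vacca2017}''), so there is no in-paper argument to compare against. Your sketch correctly reconstructs the standard proof from the cited reference: the $H^1$ bound via a Strang-type consistency argument (split through $u_I$, insert $\Pi^{\nabla,E}u$ elementwise so that $k$-consistency \eqref{eq:consistency} applies, absorb the remainders with \eqref{eq:inter-estimate} and polynomial approximation of $\Pi^{\nabla,E}u$), and the $L^2$ bound via Aubin--Nitsche, where the consistency term $a(u-\proj^\nabla u,\phi_I)$ gains its extra power of $h$ only after inserting polynomial projections of \emph{both} $u$ and the dual solution $\phi$, and where the $H^2$ regularity of $\phi$ is legitimately available because $\Omega$ is assumed convex. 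One caveat worth flagging: your remainder identity $\sum_{E}\big(a^E_h(u_I-\Pi^{\nabla,E}u,e_h)-a^E(u-\Pi^{\nabla,E}u,e_h)\big)$ tacitly interprets the definition \eqref{eq:proj_nabla} as $a_h(\proj^\nabla u,v_h)=a(u,v_h)$, i.e.\ with the \emph{exact} form on the right-hand side; this is the definition used in \cite{Vacca-Beirao2015} and is clearly what is intended, but the paper's literal statement (with $a_h$ on both sides) would instead leave a stabilization remainder $S^E\big((\mathrm{Id}-\Pi^{\nabla,E})u,(\mathrm{Id}-\Pi^{\nabla,E})e_h\big)$, whose control requires extending $S^E$, and its boundedness, off the discrete space $W^E_h$.
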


\begin{lemma}
\label{lem:proj_0_estimate}
For all $u\in H^{k+1}(\Omega)$, there holds
\begin{gather}
	\label{eq:proj_0_estimate}
	\norm{u-\proj^0 u}{L^2(\Omega)}\lesssim h^{k+1}\seminorm{u}{H^{k+1}(\Omega)}.
\end{gather}
\end{lemma}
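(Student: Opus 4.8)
The plan is to bound $u-\proj^0u$ by comparing $\proj^0 u$ with the virtual element interpolant $u_I\in W_h$, whose approximation properties are already recorded in \eqref{eq:inter-estimate}, and then to control the purely discrete remainder $e_h\coloneqq u_I-\proj^0 u\in W_h$ through the consistency and stability of $m_h$. First I would write, by the triangle inequality,
\begin{equation*}
	\norm{u-\proj^0 u}{L^2(\Omega)}\leq \norm{u-u_I}{L^2(\Omega)}+\norm{e_h}{L^2(\Omega)},
\end{equation*}
and estimate the first summand directly by the interpolation bound \eqref{eq:inter-estimate}, which already supplies the target order $h^{k+1}\seminorm{u}{H^{k+1}(\Omega)}$. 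Everything then reduces to showing that the discrete function $e_h$ satisfies $\norm{e_h}{L^2(\Omega)}\lesssim h^{k+1}\seminorm{u}{H^{k+1}(\Omega)}$.

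Since $e_h\in W_h$, the norm equivalence coming from the stability property \eqref{eq:stability} lets me pass to the mesh-dependent norm, $\norm{e_h}{L^2(\Omega)}^2\lesssim\normL{e_h}^2=m_h(e_h,e_h)$, and then expand
\begin{equation*}
	\normL{e_h}^2=m_h(u_I,e_h)-m_h(\proj^0 u,e_h)=m_h(u_I,e_h)-\sp{u}{e_h},
\end{equation*}
where in the last equality I used the defining relation \eqref{eq:proj_0} of $\proj^0$ (its right-hand side being understood as the exact $L^2(\Omega)$-inner product). The decisive step is to recast this as an element-wise consistency error: on each $E\in\T_h$ I would insert the local polynomial $L^2$-projection $\Pi^{0,E}u\in\P_k(E)\subset W^E_h$ and invoke the $k$-consistency \eqref{eq:consistency}, which gives $m^E_h(\Pi^{0,E}u,e_h)=\spE{\Pi^{0,E}u}{e_h}$. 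Adding and subtracting this quantity turns each local contribution into
\begin{equation*}
	m^E_h(u_I,e_h)-\spE{u}{e_h}=m^E_h(u_I-\Pi^{0,E}u,e_h)-\spE{u-\Pi^{0,E}u}{e_h},
\end{equation*}
so that only approximation errors of $u$ by $u_I$ and by $\Pi^{0,E}u$ remain.

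It then remains to bound the two terms on the right. The continuity of $m^E_h$ (upper bound in \eqref{eq:stability}) together with Cauchy--Schwarz gives $|m^E_h(u_I-\Pi^{0,E}u,e_h)|\lesssim\norm{u_I-\Pi^{0,E}u}{L^2(E)}\norm{e_h}{L^2(E)}$, while a further triangle inequality combined with \eqref{eq:inter-estimate} and the polynomial estimate \eqref{eq:proj_approx} (with $s=k$) yields $\norm{u_I-\Pi^{0,E}u}{L^2(E)}\lesssim h^{k+1}\seminorm{u}{H^{k+1}(E)}$; the term $\spE{u-\Pi^{0,E}u}{e_h}$ is handled identically using \eqref{eq:proj_approx}. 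Summing over $E\in\T_h$, applying the discrete Cauchy--Schwarz inequality, and using the norm equivalence once more to reabsorb $\norm{e_h}{L^2(\Omega)}\lesssim\normL{e_h}$, I arrive at $\normL{e_h}^2\lesssim h^{k+1}\seminorm{u}{H^{k+1}(\Omega)}\,\normL{e_h}$, hence the required bound on $\norm{e_h}{L^2(\Omega)}$. The only genuinely delicate point is this last reduction to a consistency error: one must insert precisely the polynomial $\Pi^{0,E}u$ so that the non-computable part of $m^E_h$ is annihilated by \eqref{eq:consistency}, leaving behind only quantities that the approximation estimates \eqref{eq:inter-estimate} and \eqref{eq:proj_approx} can control.
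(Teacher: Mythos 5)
Your proof is correct, and it is essentially the argument the paper points to: the paper itself offers no proof of this lemma (it only cites \cite{Vacca-Beirao2015} and \cite{Vacca2017}), and the standard proof in those references proceeds exactly as you do — split via the VE interpolant, pass to the $m_h$-norm by the stability bounds \eqref{eq:stability}, and reduce the discrete remainder to a consistency error by inserting the elementwise polynomial projection $\Pi^{0,E}u$ so that \eqref{eq:consistency} applies. Two minor caveats, both harmless here: the right-hand side of \eqref{eq:proj_0} must indeed be read as the exact inner product $\sp{u}{v_h}$ (as you flag, the stabilization part of $m_h$ is undefined for generic $u\in L^2(\Omega)$), and your use of \eqref{eq:inter-estimate} presupposes that the interpolant $u_I\in W_h\subset H^1_0(\Omega)$ of $u$ is well defined (i.e.\ $u$ has admissible DOFs), an imprecision inherited from the paper's own statement of \eqref{eq:inter-estimate} and immaterial for the way the lemma is used.
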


Let us denote 
$$
\seminorm{\bullet}{L^1(0,t;H^{k+1}(\Omega)}\coloneqq\int_0^t\seminorm{\bullet}{H^{k+1}(\Omega)}\,\ds,\quad 
\seminorm{\bullet}{L^2(0,t;H^{k+1}(\Omega)}\coloneqq\int_0^t\seminorm{\bullet}{H^{k+1}(\Omega)}^2\,\ds.
$$
We are now ready to state the following convergence result, which extends \cite[Theorem 3.3]{Vacca2017} to the case $\nu>0$.

\begin{theorem}
\label{thm:VEM_estimate1}
Let $u$, $u_h$ be the unique solutions of problems \eqref{eq:pde_weak} and \eqref{eq:pde_weak_sd}, respectively. Assume that $u\in C^2\left(0,T; H^1_0(\Omega)\right)$, $u_0,\, z_0\in H^{k+1}(\Omega)$ and $u_t,\, u_{tt},\, f\in L^2\left(0,T;H^{k+1}(\Omega)\right)$, with $k\geq 1$ integer. Then, there holds
\begin{gather}
	\label{eq:VEM_estimate1}
	\begin{aligned}
	&\norm{u_h(t)-u(t)}{H^1(\Omega)} + \norm{u_{h,t}(t)-u_t(t)}{L^2(\Omega)}\\
	& \lesssim h^{k}\Big( 
	\seminorm{u_0}{H^{k+1}(\Omega)}
	+ h \seminorm{z_0}{H^{k+1}(\Omega)}
	+ h \seminorm{f}{L^2(0,t;H^{k+1}(\Omega))}\\
	& + \seminorm{u_{t}}{L^1(0,t;H^{k+1}(\Omega))}
	 + h \seminorm{u_{t}}{L^2(0,t;H^{k+1}(\Omega))} 
	 + h \seminorm{u_{tt}}{L^1(0,t;H^{k+1}(\Omega))} 
	 +  h \seminorm{u_{t}}{L^2(0,t;H^{k+1}(\Omega))} 
	 \Big).
	\end{aligned}
\end{gather}
\end{theorem}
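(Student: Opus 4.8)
The plan is to follow the classical projection-based error splitting of \cite[Theorem 3.3]{Vacca2017}, introducing the energy (Ritz) projection $\proj^\nabla$ to separate an \emph{approximation} part, controlled purely by the projection estimates of \cref{lem:proj_nabla_estimate,lem:proj_0_estimate}, from a \emph{discrete} part, controlled by the stability machinery of \cref{thm:VEM_stability}. Concretely, I would write $u_h-u=\theta-\rho$ with $\rho:=u-\proj^\nabla u$ and $\theta:=u_h-\proj^\nabla u\in W_h$, and bound each of the two norms on the left of \eqref{eq:VEM_estimate1} by the triangle inequality, $\norm{u_h(t)-u(t)}{H^1(\Omega)}\le\normH{\theta(t)}+\norm{\rho(t)}{H^1(\Omega)}$ and similarly for the velocities.

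First I would dispose of $\rho$. Since $\proj^\nabla$ is linear and time-independent it commutes with $\partial_t$, so \cref{lem:proj_nabla_estimate} gives $\norm{\rho(t)}{H^1(\Omega)}\lesssim h^k\seminorm{u(t)}{H^{k+1}(\Omega)}$ and $\norm{\rho_t(t)}{L^2(\Omega)}\lesssim h^{k+1}\seminorm{u_t(t)}{H^{k+1}(\Omega)}$. Because the right-hand side of \eqref{eq:VEM_estimate1} contains no pointwise-in-time $H^{k+1}$ norm, I would replace these via the fundamental theorem of calculus, $\seminorm{u(t)}{H^{k+1}(\Omega)}\le\seminorm{u_0}{H^{k+1}(\Omega)}+\seminorm{u_t}{L^1(0,t;H^{k+1}(\Omega))}$ and $\seminorm{u_t(t)}{H^{k+1}(\Omega)}\le\seminorm{z_0}{H^{k+1}(\Omega)}+\seminorm{u_{tt}}{L^1(0,t;H^{k+1}(\Omega))}$, which already produces the initial-data and $L^1$-in-time terms of the estimate.

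Next I would derive the error equation for $\theta$. Subtracting the semidiscrete problem \eqref{eq:pde_weak_sd} from the continuous one \eqref{eq:pde_weak} tested against $v_h\in W_h$, and using the defining property \eqref{eq:proj_nabla} of $\proj^\nabla$ so that the stiffness terms combine into $a(u,v_h)$, which \eqref{eq:pde_weak} in turn replaces by $\sp{f}{v_h}-\sp{u_{tt}}{v_h}-\nu\sp{u_t}{v_h}$, I obtain that $\theta$ solves a discrete damped-wave problem $m_h(\theta_{tt},v_h)+\nu m_h(\theta_t,v_h)+a_h(\theta,v_h)=\langle G(t),v_h\rangle$ whose source $G$ collects the load error $\sp{f-f_h}{v_h}$, the mass-consistency error $\sp{u_{tt}}{v_h}-m_h(\proj^\nabla u_{tt},v_h)$, and its $\nu$-weighted first-order analogue $\sp{u_t}{v_h}-m_h(\proj^\nabla u_t,v_h)$. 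Each of these I would split, inserting $\proj^0$ and invoking the $k$-consistency \eqref{eq:consistency}, into a genuine projection error (estimated by \cref{lem:proj_0_estimate,lem:proj_nabla_estimate}, of order $h^{k+1}$) plus a VEM variational-crime term $\sp{\cdot}{\cdot}-m_h(\cdot,\cdot)$ evaluated on smooth virtual functions and controlled through \eqref{eq:stability} and \eqref{eq:CS-mh}; the initial values $\theta(0),\theta_t(0)$ measure the distance between the projected and interpolated initial data and are of the same order.

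Finally I would run the energy argument of \cref{thm:VEM_stability} on $\theta$: testing with $v_h=\theta_t$ and integrating in time turns the left-hand side into $\tfrac12\bigl(\normL{\theta_t(t)}^2+\normH{\theta(t)}^2\bigr)+\nu\int_0^t\normL{\theta_t}^2$, where the damping integral is sign-definite and is the only genuinely new ingredient with respect to the undamped case $\nu=0$. The subtle point — and the step I expect to be the main obstacle — is the temporal bookkeeping needed to reproduce exactly the mixed $L^1/L^2$ structure of \eqref{eq:VEM_estimate1}: I would treat the source terms differently inside one and the same Gronwall inequality, estimating the load error and the $\nu$-weighted term by Cauchy–Schwarz and Young (yielding the $L^2(0,t;H^{k+1}(\Omega))$ contributions) while estimating the second-time-derivative/mass-consistency term by the $L^1(0,t;L^2(\Omega))\times L^\infty(0,t;L^2(\Omega))$ pairing and absorbing the $L^\infty$-in-time energy factor into $\sup_s(\normL{\theta_t}^2+\normH{\theta}^2)^{1/2}$ on the left. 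Combining the resulting bound on $\theta$ with the $\rho$-estimates of the first step then yields \eqref{eq:VEM_estimate1}, the care throughout going into controlling all VEM consistency errors uniformly in $h$.
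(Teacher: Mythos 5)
Your proposal follows essentially the same route as the paper's proof: the same Ritz-projection splitting $u_h-u=\theta\pm\rho$, the same treatment of $\rho$ via Lemma~\ref{lem:proj_nabla_estimate} and the fundamental theorem of calculus (producing the initial-data and $L^1$-in-time terms), the same error equation for $\theta$ with the three sources (load error, mass-consistency error for $u_{tt}$, and the $\nu$-weighted one for $u_t$), each bounded at order $h^{k+1}$ by the projection and consistency estimates. The one point where you diverge is your anticipated ``main obstacle'': no mixed $L^1$--$L^\infty$ pairing or Gronwall absorption inside the $\theta$ energy estimate is needed, because the paper simply invokes Theorem~\ref{thm:VEM_stability} with all three sources measured in $L^2(0,t;L^2(\Omega))$ --- the mixed $L^1/L^2$ structure of \eqref{eq:VEM_estimate1} comes entirely from the $\rho$ bounds, which you had already obtained in your second step, so your extra bookkeeping is correct but superfluous.
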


\begin{proof}
The proof follows the same steps as the proof of \cite[Theorem 3.3]{Vacca2017}. We set
\begin{gather*}
	u_h(t)-u(t) = \left( u_h(t) - \proj^\nabla u(t)\right) + \left(\proj^\nabla u(t) - u(t)\right) \eqqcolon \theta(t) + \rho(t).
\end{gather*}
We bound the $\norm{\rho(t)}{H^1(\Omega)}$ by using~\eqref{eq:proj_nabla_estimate1} 
\begin{gather}
	\label{eq:rho_a}
	\begin{aligned}
	\norm{\rho(t)}{H^1(\Omega)} &\lesssim h^k \seminorm{u(t)}{H^{k+1}(\Omega)}
	= h^k \left( \seminorm{u_0}{H^{k+1}(\Omega)} + \int_0^t \seminorm{u_t(s)}{H^{k+1}(\Omega)} \, ds\right)\\
	&= h^k\left( \seminorm{u_0}{H^{k+1}(\Omega)} + \seminorm{u_t}{L^1(0,t;H^{k+1}(\Omega))}\right).
	\end{aligned}
\end{gather}
Similarly, thanks to~\eqref{eq:proj_nabla_estimate2}, we find
\begin{gather}
	\label{eq:rho_b}	
	\norm{\rho_t(t)}{L^2(\Omega)} \lesssim h^{k+1} \seminorm{u_t(t)}{H^{k+1}(\Omega)}
	= h^{k+1}\left( \seminorm{z_0}{H^{k+1}(\Omega)} + \seminorm{u_{tt}}{L^1(0,t;H^{k+1}(\Omega))}\right).
\end{gather}
Note that we can use the estimate~\eqref{eq:proj_nabla_estimate2} since we are assuming $\Omega$ convex.
In order to bound the norm of $\theta(t)$, we note that, for all $v_h\in W_h$ there holds 
\begin{gather*}
	\begin{aligned}
	&m_h(\theta_{tt}(t),v_h) + \nu m_h(\theta_{t}(t),v_h) + a_h(\theta(t),v_h)\\
	&\quad = \sp{f_h(t)-f(t)}{v_h} 
	+ \left[\sp{u_{tt}(t)}{v_h} - m_h(\proj^\nabla u_{tt}(t),v_h)\right]\\
	&\quad\quad+ \nu \left[\sp{u_{t}(t)}{v_h} - m_h(\proj^\nabla u_{t}(t),v_h)\right]\\
	&\quad\eqqcolon \sp{\varphi(t)}{v_h}
	+ \sp{\eta_1(t)}{v_h}
	+ \sp{\eta_2(t)}{v_h},
	\end{aligned}
\end{gather*}
where $\eta_1(t),\,\eta_2(t)\in W_h$ are the Riesz representation of the operators  $\sp{u_{tt}(t)}{\bullet} - m_h(\proj^\nabla u_{tt}(t),\bullet)$ and $\sp{u_{t}(t)}{\bullet} - m_h(\proj^\nabla u_{t}(t),\bullet)$ on the dual space of $W_h$.
Then, $\theta(t)$ is the unique solution of the following weak problem: for all $v_h\in W_h$ there holds
\begin{gather*}
	\left\{\begin{array}{l}
		m_h(\theta_{tt}(t),v_h) + \nu m_h(\theta_{t}(t),v_h) + a_h(\theta(t),v_h)
		= \sp{\varphi(t)+\eta_1(t)+\eta_2(t)}{v_h}\\
		\theta(0) = u_{h,0}-\proj^\nabla u_0\\
		\theta_t(0) = z_{h,0}-\proj^\nabla z_0.
	\end{array}\right.
\end{gather*}
By applying Theorem \ref{thm:VEM_stability} we find:
\begin{equation}
	\label{eq:bound_theta1}
	\begin{aligned}
	&\normH{\theta(t)}^2 + \normL{\theta_t(t)}^2 \\
	&\lesssim \normH{\theta_{0}}^2 + \normL{\theta_t(0)}^2 
	+ \norm{\varphi}{L^2(0,t;L^2(\Omega))}^2 
	+ \norm{\eta_1}{L^2(0,t;L^2(\Omega))}^2 
	+ \norm{\eta_2}{L^2(0,t;L^2(\Omega))}^2 .
	\end{aligned}
\end{equation}
On the other hand, using \eqref{eq:proj_approx}, we find
\begin{gather}
	\label{eq:phi_bound}
	\begin{aligned}
 	\norm{\varphi}{L^2(0,t;L^2(\Omega))}^2
	&=\int_0^t\norm{\varphi(s)}{L^2(\Omega)}^2\,\ds
	=\int_0^t\norm{f_h(s)-f(s)}{L^2(\Omega)}^2\,\ds\\
	&=\int_0^t\sum_{E\in\T_h}\norm{\Pi^{0,E}f(s)-f(s)}{L^2(E)}^2\,\ds\\
	&\lesssim h^{2(k+1)} \seminorm{f}{L^2(0,t;H^{k+1}(\Omega))}^2.
	\end{aligned}
\end{gather}
To bound $\norm{\eta_1}{L^2(0,t;L^2(\Omega))}^2$, we recall \cite[equation (32)]{Vacca2017}: for all $t\in (0,T)$, there holds
\begin{gather*}
	\sp{\eta_1(t)}{v_h}\lesssim h^{k+1} \seminorm{u_{tt}(t)}{H^{k+1}(\Omega)}\norm{v_h}{L^2(\Omega)},
\end{gather*}
yielding
\begin{gather*}
	\norm{\eta_1(t)}{L^2(\Omega)}
	=\sup_{0\neq v_h\in W_h} \frac{\sp{\eta_1(t)}{v_h}}{\norm{v_h}{L^2(\Omega)}}
	\lesssim h^{k+1} \seminorm{u_{tt}(t)}{H^{k+1}(\Omega)}.
\end{gather*}
Hence,
\begin{gather}
	\label{eq:eta1_bound}
	\begin{aligned}
 	\norm{\eta_1}{L^2(0,t;L^2(\Omega))}^2
	&=\int_0^t \norm{\eta_1(s)}{L^2(\Omega)}^2\,\ds
	\lesssim h^{2(k+1)} \seminorm{u_{tt}}{L^2(0,t;H^{k+1}(\Omega))}^2.
	\end{aligned}
\end{gather}
Finally, to bound $\seminorm{\eta_2}{L^2(0,t;L^2(\Omega))}^2$, we observe that, for all $v_h\in W_h$ there holds
\begin{gather*}
	\begin{aligned}
		&\sp{\eta_2(t)}{v_h}
		= \sp{u_t(t)}{v_h} - m_h(\proj^\nabla u_t(t), v_h)\\
		&\quad=\sum_{E\in \T_h}\left[ \spE{u_t(t)}{v_h} - m^E_h(\proj^\nabla u_t(t), v_h)\right]\\
		&\quad=\sum_{E\in \T_h}\left[ \spE{u_t(t)-\Pi^{0,E}u_t(t)}{v_h}
			- m^E_h(\proj^\nabla u_t(t)-\Pi^{0,E}u_t(t), v_h)\right]\\
		&\quad\lesssim \sum_{E\in \T_h}
		\left( \norm{u_t(t)-\Pi^{0,E}u_t(t)}{L^2(E)} + \norm{\proj^\nabla u_t(t)-\Pi^{0,E}u_t(t)}{L^2(E)} \right)
		\norm{v_h}{L^2(E)},
	\end{aligned}
\end{gather*}
where in the third equality we used that $\spE{\Pi^{0,E}u_t(t)}{v_h} = m_h^E(\Pi^{0,E}u_t(t),v_h)$, and the last inequality follows by the Cauchy Schwarz inequality and \eqref{eq:cont_VEM_forms}. Hence, by using \eqref{eq:proj_approx} and applying Lemma \ref{lem:proj_nabla_estimate}, we obtain
\begin{gather*}
	\norm{\eta_2(t)}{L^2(\Omega)}
	=\sup_{0\neq v_h\in W_h} \frac{\sp{\eta_1(t)}{v_h}}{\norm{v_h}{L^2(\Omega)}}
	\lesssim h^{k+1} \seminorm{u_{t}(t)}{H^{k+1}(\Omega)},
\end{gather*}
hence
\begin{gather}
	\label{eq:eta2_bound}
	\seminorm{\eta_2}{L^2(0,t;L^2(\Omega))}^2\lesssim h^{2(k+1)} \seminorm{u_{t}}{L^2(0,t;H^{k+1}(\Omega))}^2.
\end{gather}
The norm of the initial data are derived in \cite[equations (33)-(34)]{Vacca2017}:
\begin{gather}
	\label{eq:init}
	\begin{aligned}
	\normH{\theta_{0}}^2 &\lesssim h^{2k} \seminorm{u_0}{H^{k+1}(\Omega)}^2,\\
	\normL{\theta_t(0)}^2 &\lesssim h^{2(k+1)} \seminorm{z_0}{H^{k+1}(\Omega)}^2.
	\end{aligned}
\end{gather}
Combining \eqref{eq:init}, \eqref{eq:phi_bound}, \eqref{eq:eta1_bound} and \eqref{eq:eta2_bound}, we obtain
\begin{gather}
	\label{eq:theta}
	\begin{aligned}
	&\normH{\theta(t)}^2 + \normL{\theta_t(t)}^2
	\lesssim h^{2k} \Big[\seminorm{u_0}{H^{k+1}(\Omega)}^2 
	+ h^{2} \seminorm{z_0}{H^{k+1}(\Omega)}^2 \\
	&\quad+h^{2}\big(
	 \seminorm{f}{L^2(0,t;H^{k+1}(\Omega))}^2
	+ \seminorm{u_{tt}}{L^2(0,t;H^{k+1}(\Omega))}^2
	+ \seminorm{u_{t}}{L^2(0,t;H^{k+1}(\Omega))}^2\big)\Big].
	\end{aligned}
\end{gather}
Collecting \eqref{eq:rho_a}, \eqref{eq:rho_b} and \eqref{eq:theta}, we conclude 	\eqref{eq:VEM_estimate1}.
\end{proof}

\subsection{Algebraic formulation}
\label{sec:VEM_alg}

Now, we introduce the algebraic formulation of~\eqref{eq:pde_weak_sd} that will be instrumental for the DG discretization in time (see Section~\ref{sec:DG_time}). To this end, we denote with $N_h\colon=\dim(W_h)$, and with $\{\varphi_i\}_{i=1}^{N_h}$ the set of VE basis functions for $W_h$. 
We write, for all $t\in (0,T)$
\begin{gather}
	\label{eq:u_sd}
	u_h(t,x)=\sum_{j=1}^{N_h} U_j(t) \varphi_j(x),
\end{gather}
where $U_j(t)$ is the $j$-th global DOF of $u_h(t)$. Inserting~\eqref{eq:u_sd} into~\eqref{eq:pde_weak_sd} with $v_h=\varphi_i$, we obtain the following system of second-order differential equations
\begin{gather}
	\label{eq:algebraic_form_sd}
	M_h \ddot \U(t) + \nu M_h \dot \U(t) + A_h \U(t) = \F_h(t)
\end{gather}
where 
\begin{itemize}
	\item 
	$\U(t)\coloneqq [U_1(t),\ldots,U_{N_h}(t)]^T\in\R^{N_h}$;
	\item
	$\dot \U(t)\coloneqq [\dot U_1(t),\ldots, \dot U_{N_h}(t)]^T\in\R^{N_h}$ is the vector collecting the DOF of the first temporal derivative of $u$, i.e., $u_{h,t}(t,x)=\sum_{j=1}^{N_h} \dot U_j(t) \varphi_j(x)$;
	\item
	$\ddot \U(t)\coloneqq [\ddot U_1(t),\ldots, \ddot U_{N_h}(t)]^T\in\R^{N_h}$ is the vector collecting the DOF of the second temporal derivative of $u$, i.e., $u_{h,tt}(t,x)=\sum_{j=1}^{N_h} \ddot U_j(t) \varphi_j(x)$;
	\item
	$\F_h(t)\coloneqq[F_1(t),\ldots,F_{N_h}(t)]^T\in\R^{N_h}$ with $F_i(t)\coloneqq\sp{f_h(t)}{\varphi_i}$ for all $i=1,\ldots,N_h$;
	\item
	$M_h,\, A_h\in \R^{N_h\times N_h}$ are the mass and stiffness matrices with elements given as
\begin{gather}
	\label{eq:mass_stiff_VEM}
	\textrm{for all }i,j=1,\ldots,N_h\quad
	(M_h)_{i,j}\coloneqq m_h(\varphi_j,\varphi_i)\quad
	(A_h)_{i,j}\coloneqq a_h(\varphi_j,\varphi_i).
\end{gather}
\end{itemize}
Equation \eqref{eq:algebraic_form_sd} is supplemented with the initial conditions $\U(0)=\U_{h,0}$, $\dot\U(0)=\Z_{h,0}$, where the vector $\U_{h,0}\in\R^{N_h}$ ($\Z_{h,0}\in\R^{N_h}$, respectively) collects the DOFs of $u_{h,0}$ ($z_{h,0}$, respectively).

\section{DG discretization in time}
\label{sec:DG_time}

In this section we first recall the DG (in time) finite dimensional space introduced in \cite{Antonietti-Mazzieri-DalSanto-Quarteroni2017,Antonietti-Mazzieri-Migliorini2020}, and then we apply the DG time integration scheme to \eqref{eq:algebraic_form_sd}. Let the time interval $(0,T]$ be partitioned into $N_T$ time-slabs, i.e., $(0,T]=\cup_{n=1}^{N_T} I_n$, with $I_n\coloneqq(t_{n-1},t_n]$ and $0=t_0<t_1<\cdots<t_n<\cdots<t_{N_T}=T$. We denote with $\tau_n$ the length of the $n$-th time-slab $\tau_n\coloneqq t_n-t_{n-1}$, and we collect the elements of the set $\{\tau_n\}_{n=1}^{N_T}$ in the vector $\Tau$. Moreover, we denote with $\I_{\Tau}$ the partition of the time interval.
Given a sufficiently regular function $v$, we define the time jump operator at $t_n$ for any $n\geq 0$ as
\begin{gather}
	\label{eq:time_jump}
	\jump{v}{n}\coloneqq v(t_n^+)-v(t_n^-),
\end{gather}
where 
\begin{gather*}
	v(t_n^+)=\lim_{\varepsilon\rightarrow 0^+} v(t_n+\varepsilon),\quad
	v(t_n^-)=\lim_{\varepsilon\rightarrow 0^-} v(t_n+\varepsilon).
\end{gather*}
\begin{figure}
    \begin{center}
         \includegraphics[width=0.6\textwidth]{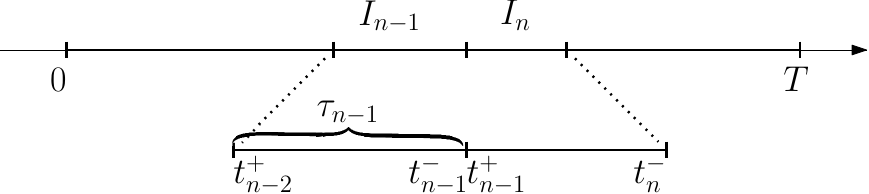}
         \caption{(Top) Example of time partition $\I_\Tau$. (Bottom) Zoom on the time-slabs $I_{n-1}\cup I_n$.}
         \label{fig:time}
    \end{center}
\end{figure}
See Figure \ref{fig:time} for an example of time partition as well as the graphical representation of $t_{n-1}^-,\, t_{n-1}^+$.

Given $r_n\in\N$, we denote the space of polynomials on $I_n$ of degree less than or equal to $r_n$ as $\P_{r_n}(I_n)$, and we define the functional space of piecewise polynomials of degree at least 2 on $\I_{\Tau}$ as
\begin{gather}
	\label{eq:DG_space}
	W_{\Tau}\coloneqq\left\{v\in L^2(0,T) \textrm{ such that } v|_{I_n}\in \P_{r_n}(I_n)\textrm{ with }r_n\geq 2\textrm{ for all }n=1,\ldots, N_T\right\}.
\end{gather}
Since the unknown of \eqref{eq:algebraic_form_sd} is a vector with length $N_h$, we need to introduce the multi-variate version of $W_{\Tau}$. Given the multi-index $\mathbf{r}=(r_1,\ldots,r_{N_\tau})\in\N^{N_T}$, with components $r_n\geq 2$ for all $n=1,\ldots, N_T$, we define
\begin{gather*}
	[W_{\Tau}]^{N_h}\coloneqq\left\{\begin{array}{l}
	\mathbf{V}=(v_1,\ldots,v_{N_h})\in [L^2(0,T)]^{N_h} \colon v_{j}\in W_{\Tau}\ \forall\, j=1,\ldots,N_h
	\end{array}\right\}.
\end{gather*}

Multiplying \eqref{eq:algebraic_form_sd} by a test function $\dot \V\in [W_{\Tau}]^{N_h}$ and integrating on $I_n$, we get
\begin{gather}	
	\label{eq:fd_In}
	\begin{aligned}
		&\spT{M_h \ddot \U}{\dot \V} + \nu \spT{M_h \dot \U}{\dot \V} + \spT{A_h \U}{\dot \V}\\
		&\quad 
		+  M_h \jump{\dot \U}{n}\cdot \dot \V(t^+_n) + A_h \jump{\U}{n}\cdot \V(t^+_n)
		 = \spT{\F_h}{\dot \V}
	\end{aligned}
\end{gather}
where the first two terms in the second row of \eqref{eq:fd_In} are zero since $\U(t)\in C^2(0,T)$, hence they can be added to the equation.
Summing over all time-slabs, we find the following problem: find $\U_\Tau\in [W_{\Tau}]^{N_h}$ such that, for all $\dot\V\in [W_{\Tau}]^{N_h}$ there holds
\begin{gather}
	\label{eq:fd}
	\begin{aligned}
		&\sum_{n=1}^{N_T}\left[
			\spT{M_h \ddot \U_\Tau}{\dot \V} + \nu \spT{M_h \dot \U_\Tau}{\dot \V} + \spT{A_h \U_\Tau}{\dot \V}
		\right]\\
		&\quad + \sum_{n=1}^{N_T-1}\left[
			M_h \jump{\dot \U_\Tau}{n}\cdot \dot \V(t^+_n) + A_h \jump{\U_\Tau}{n}\cdot \V(t^+_n)
		\right]\\
		&\quad + M_h \dot \U_\Tau(0^+)\cdot \dot \V(0^+) + A_h \U_\Tau(0^+)\cdot \V(0^+)\\
		&\quad =
		\sum_{n=1}^{N_T}\left[\spT{\F_h}{\dot \V}\right]
		+ M_h \Z_{h,0}\cdot \dot\V(0^+) + A_h \U_{h,0}\cdot \V(0^+),
	\end{aligned}
\end{gather}
with the initial conditions $\U_\Tau(0)=\U_{h,0}$, $\dot\U_\Tau(0)=\Z_{h,0}$.

Let $\normE{\bullet}\colon [W_{\Tau}]^{N_h}\rightarrow\R$ be defined as
\begin{gather}
	\label{eq:energy_norm}
	\begin{aligned}
	\normE{\V}^2&\coloneqq
	\nu \sum_{n=1}^{N_T}\norm{M_h^{1/2}\dot \V}{L^2(I_n)}^2\\
	 &\quad+ \frac{1}{2}  (M_h^{1/2} \dot \V(0^+))^2  
		+ \frac{1}{2}\sum_{n=1}^{N_T-1} (M_h^{1/2} \jump{\dot \V}{n})^2
		+ \frac{1}{2}  (M_h^{1/2} \dot \V(T^-))^2\\
	&\quad + \frac{1}{2}  (A_h^{1/2} \V(0^+))^2  
		+ \frac{1}{2}\sum_{n=1}^{N_T-1} (A_h^{1/2} \jump{\V}{n})^2
		+ \frac{1}{2}  (A_h^{1/2} \V(T^-))^2.
	\end{aligned}
\end{gather}
In~\cite{Antonietti-Mazzieri-Migliorini2020} it is shown that $\normE{\bullet}$ is a norm on $[W_{\Tau}]^{N_h}$ that, from now on, will be referred to as energy norm. 

Moreover, in \cite[Proposition 3.1]{Antonietti-Mazzieri-DalSanto-Quarteroni2017} it is proved the following stability result: if $\F\in L^2(0,T)$, then the unique solution $\U_{\Tau}\in [W_{\Tau}^{\mathbf r}]^{N_h}$ of \eqref{eq:fd} satisfies  
\begin{gather}
	\label{eq:DG_stability}
	\normE{\U_{\Tau}}\lesssim
	\left( \norm{\F}{L^2(0,T)}^2 + (A_h^{1/2} \U_{h,0})^2 + (M_h^{1/2} \Z_{h,0})^2 \right)^{1/2}.
\end{gather}
In addition, the DG scheme is proved to be convergent, according to the following result (see~\cite[Theorem 3.12]{Antonietti-Mazzieri-DalSanto-Quarteroni2017}).  

\begin{theorem}
\label{thm:DG_error}
If $\U$ is such that $\U|_{I_n}\in \left(H^{q_n}(I_n)\right)^{N_h}$ with $q_n\geq 2$ for all $n=1,\ldots, N_T$, then 
\begin{gather}
	\label{eq:DG_error}
	\normE{\U-\U_{\Tau}}^2\lesssim\sum_{n=1}^{N_T}\frac{\tau_n^{2\beta_n-3}}{r_n^{2q_n-6}}\norm{\U}{(H^{q_n}(I_n))^{N_h}}^2,
\end{gather}
where $\beta_n\coloneqq\min\{r_n+1,q_n\}$ and $r_n\geq 2$ for all $n=1,\ldots, N_T$.
\end{theorem}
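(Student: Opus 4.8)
The plan is to run the standard \emph{consistency--coercivity--approximation} argument for discontinuous Galerkin time-stepping. Denote by $\mathcal{B}(\U,\V)$ the bilinear form collecting the left-hand side of \eqref{eq:fd} and by $\mathcal{F}(\V)$ its right-hand side, so that $\U_\Tau$ solves $\mathcal{B}(\U_\Tau,\V)=\mathcal{F}(\V)$ for all $\V\in[W_\Tau]^{N_h}$. First I would record two structural facts. The exact DOF-vector $\U$ is of class $C^2$ in time, so all its temporal jumps vanish and, inserting it into \eqref{eq:fd}, it satisfies the same identity; subtracting gives the Galerkin orthogonality $\mathcal{B}(\U-\U_\Tau,\V)=0$ for every $\V\in[W_\Tau]^{N_h}$. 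Second, integrating $\spT{M_h\ddot\V}{\dot\V}=\tfrac12\int_{I_n}\frac{d}{dt}(M_h^{1/2}\dot\V)^2\dt$ and $\spT{A_h\V}{\dot\V}=\tfrac12\int_{I_n}\frac{d}{dt}(A_h^{1/2}\V)^2\dt$ by parts on each slab and telescoping the resulting endpoint values against the jump terms $M_h\jump{\dot\V}{n}\cdot\dot\V(t_n^+)$ and $A_h\jump{\V}{n}\cdot\V(t_n^+)$ yields the exact diagonal identity $\mathcal{B}(\V,\V)=\normE{\V}^2$. This is precisely the coercivity underlying the stability bound \eqref{eq:DG_stability}.

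Next I would split the error through a temporal projection. Let $\Pi_\Tau$ be the standard DG-in-time projection acting componentwise, characterized on each slab $I_n$ by the right-endpoint interpolation $\Pi_\Tau\U(t_n^-)=\U(t_n^-)$ together with the moment conditions $\int_{I_n}(\U-\Pi_\Tau\U)\cdot q\dt=0$ for all $q\in\P_{r_n-1}(I_n)$. Writing $\U-\U_\Tau=\eta+\xi$ with $\eta\coloneqq\U-\Pi_\Tau\U$ and $\xi\coloneqq\Pi_\Tau\U-\U_\Tau\in[W_\Tau]^{N_h}$, the two facts above combine to give $\normE{\xi}^2=\mathcal{B}(\xi,\xi)=\mathcal{B}(\U-\U_\Tau,\xi)-\mathcal{B}(\eta,\xi)=-\mathcal{B}(\eta,\xi)$. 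The role of the projection is now decisive: since $\dot\xi|_{I_n}\in\P_{r_n-1}(I_n)$, the moment condition makes the viscous term $\nu\spT{M_h\dot\eta}{\dot\xi}$ and the stiffness term $\spT{A_h\eta}{\dot\xi}$ vanish upon summation, while the right-endpoint interpolation annihilates the $\dot\U$-jump contributions; integrating the remaining $\spT{M_h\ddot\eta}{\dot\xi}$ by parts and using again the moment and endpoint conditions reduces $\mathcal{B}(\eta,\xi)$ to a handful of endpoint terms in $\eta$ and $\dot\eta$, which are estimated by Cauchy--Schwarz against the matching velocity and displacement components of $\normE{\xi}$. Dividing by $\normE{\xi}$ bounds $\normE{\xi}$ by projection-error quantities, and the triangle inequality $\normE{\U-\U_\Tau}\le\normE{\eta}+\normE{\xi}$ then controls the full error by $\normE{\eta}$ plus those quantities.

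Finally I would invoke the $hp$-approximation estimates for $\Pi_\Tau$ (in the form due to Sch\"otzau--Schwab, as used in \cite{Antonietti-Mazzieri-DalSanto-Quarteroni2017}): on each slab the projection error of $\U$ in $L^2(I_n)$, in its first derivative, and in the endpoint traces scales like $\tau_n^{\beta_n}/r_n^{q_n}$ reduced by the order of the norm, with $\beta_n=\min\{r_n+1,q_n\}$. Feeding these into the endpoint and volume terms collected above and into $\normE{\eta}$, and observing that every component of $\normE{\bullet}$ controls a \emph{first} temporal derivative — either in $L^2(I_n)$ or as a pointwise trace at a slab interface, the latter carrying an extra half-power $\tau_n^{-1/2}$ loss through the scaled trace inequality — one finds that after squaring and summing over $n$ the powers consolidate precisely into $\tau_n^{2\beta_n-3}/r_n^{2q_n-6}$. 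The crux is this last bookkeeping step: one must track the exact Sobolev order entering each term of $\mathcal{B}(\eta,\xi)$ and of $\normE{\eta}$ and pair it with the sharp $hp$-estimate, since it is the first-derivative structure of the energy norm — rather than the coercivity or the Galerkin orthogonality, which are purely algebraic — that fixes the exponents in \eqref{eq:DG_error}.
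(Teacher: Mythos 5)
Your proposal has to be measured against \cite[Theorem 3.12]{Antonietti-Mazzieri-DalSanto-Quarteroni2017}: the paper itself does not prove Theorem~\ref{thm:DG_error} but imports it verbatim from that reference. The skeleton you describe --- the exact identity $\mathcal B(\V,\V)=\normE{\V}^2$, Galerkin orthogonality (legitimate, since the exact DOF vector is $C^2$ in time so its jumps vanish and it satisfies \eqref{eq:fd}), the splitting $\U-\U_\Tau=\eta+\xi$ through a right-endpoint/moment projection, and $hp$-approximation bounds of Sch\"otzau--Schwab type --- is indeed the strategy of that proof. The gap is in the middle step, and it is not cosmetic, because the exponents in \eqref{eq:DG_error} are decided exactly there.

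First, most of the claimed cancellations are false. The moment condition $\int_{I_n}\eta\cdot q\,\dt=0$ for all $q\in[\P_{r_n-1}(I_n)]^{N_h}$ does kill the stiffness term $\spT{A_h\eta}{\dot\xi}$, but not the viscous one: integrating by parts and using that $M_h\ddot\xi\in[\P_{r_n-2}(I_n)]^{N_h}$ gives $\nu\spT{M_h\dot\eta}{\dot\xi}=-\nu\,M_h\eta(t_{n-1}^+)\cdot\dot\xi(t_{n-1}^+)$, which is nonzero because the projection fixes $\eta(t_n^-)=0$ but leaves $\eta(t_{n-1}^+)$ free. Likewise, the right-endpoint condition constrains $\eta$, not $\dot\eta$, so the velocity jumps $\jump{\dot\eta}{n}$ do not vanish at all (they are among the dominant contributions), and even the displacement jump survives as $\jump{\eta}{n}=\eta(t_n^+)$.

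Second, and this is the essential missing ingredient, the surviving terms cannot all be handled by Cauchy--Schwarz against $\normE{\xi}$, as you assert. Integrating $\spT{M_h\ddot\eta}{\dot\xi}$ by parts (which one must do: the direct bound through $\lVert\ddot\eta\rVert_{L^2(I_n)}$ already loses a full power of $\tau_n$) produces either $\spT{M_h\dot\eta}{\ddot\xi}$ or the trace $M_h\eta(t_{n-1}^+)\cdot\ddot\xi(t_{n-1}^+)$; moreover the factors $\dot\xi(t_n^+)$ and $\xi(t_n^+)$ multiplying the jump terms are \emph{one-sided interior} traces, which are not components of $\normE{\xi}$ --- the energy norm \eqref{eq:energy_norm} contains only jumps and the extreme traces at $0^+$ and $T^-$. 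Controlling $\ddot\xi$ and these one-sided traces requires trace and inverse inequalities for polynomials in time (of the type $\lVert\dot p\rVert_{L^2(I_n)}\lesssim r_n^2\tau_n^{-1}\lVert p\rVert_{L^2(I_n)}$), with the resulting factors absorbed into the $\nu$-weighted term of $\normE{\xi}^2$. These inverse estimates are precisely what cost the extra powers of $r_n$: your bookkeeping, based on trace inequalities alone (a $\tau_n^{-1/2}$ loss per trace), can explain the exponent $2\beta_n-3$ in $\tau_n$ but would produce $r_n^{-(2q_n-4)}$ or better, not the $r_n^{-(2q_n-6)}$ of the statement. Without this ingredient the argument does not close as written.
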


\begin{corollary}
\label{cor:err_DG}
Let $\U$ be such that $\U|_{I_n}\in \left(H^{q}(I_n)\right)^{N_h}$ for all $n=1,\ldots, N_T$, with $q\geq 2$. Moreover, let $\tau_n=\Delta t>0$ and $r_n=r\geq 2$ integer, for all $n=1,\ldots,N_T$. Then, the estimate \eqref{eq:DG_error} simplifies as follows:
\begin{gather}
	\label{eq:DG_error2}
	\normE{\U-\U_{\Tau}}^2\lesssim
	\frac{\Delta t^{2\beta-3}}{r^{2q-6}}
	\sum_{n=1}^{N_T}\norm{\U}{(H^{q_n}(I_n))^{N_h}}^2,
\end{gather}
where $\beta\coloneqq\min\{r+1,q\}$. In particular, $\normE{\U-\U_{\Tau}} = O(\Delta t^{\beta-3/2})$ as $\Delta t$ decreases to 0.
\end{corollary}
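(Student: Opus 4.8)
The plan is to obtain \Cref{cor:err_DG} as a direct specialization of \Cref{thm:DG_error}, since the hypotheses here are exactly the uniform-mesh, uniform-degree instance of the general estimate \eqref{eq:DG_error}. First I would substitute $\tau_n=\Delta t$, $r_n=r$ and $q_n=q$ for every $n=1,\ldots,N_T$ into \eqref{eq:DG_error}. The slab-dependent exponent then collapses to a single value, $\beta_n=\min\{r_n+1,q_n\}=\min\{r+1,q\}=\beta$, independent of $n$, so the prefactor $\tau_n^{2\beta_n-3}/r_n^{2q_n-6}$ becomes the $n$-independent quantity $\Delta t^{2\beta-3}/r^{2q-6}$. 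Pulling this common factor outside the summation over the time-slabs immediately produces the simplified bound \eqref{eq:DG_error2}.

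For the asymptotic statement $\normE{\U-\U_{\Tau}}=O(\Delta t^{\beta-3/2})$, I would take the square root of \eqref{eq:DG_error2} and argue that the remaining sum $\sum_{n=1}^{N_T}\norm{\U}{(H^{q}(I_n))^{N_h}}^2$ is bounded by a constant independent of $\Delta t$. Since the time-slabs $\{I_n\}$ are disjoint with union $(0,T]$, this sum is precisely the broken $H^q$-norm of $\U$ over the partition; whenever $\U$ enjoys the assumed global regularity it coincides with the fixed quantity $\norm{\U}{(H^{q}(0,T))^{N_h}}^2$ determined by $\U$ alone, and more generally it remains uniformly bounded in $\Delta t$. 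Absorbing this constant into the hidden multiplicative constant of $\lesssim$ then leaves the dependence on $\Delta t$ governed entirely by the factor $\Delta t^{2\beta-3}$, whose square root yields the claimed rate $\Delta t^{\beta-3/2}$.

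The only point that genuinely requires care — and which I regard as the main, if modest, obstacle — is the behaviour of the right-hand side as $\Delta t\to 0$: the number of slabs $N_T=T/\Delta t$ grows without bound, so one must verify that summing over more and more intervals does not introduce a hidden $\Delta t$-dependence that would degrade the order. This is resolved exactly by the observation above that the sum reassembles into a fixed (global or broken) Sobolev norm of $\U$, so no mesh-dependent accumulation occurs and the convergence order is controlled purely by the power of $\Delta t$.
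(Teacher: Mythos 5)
Your proposal is correct and takes essentially the same route the paper intends: the corollary is stated there without proof, precisely because it is the direct specialization of Theorem~\ref{thm:DG_error} to uniform $\tau_n=\Delta t$, $r_n=r$, $q_n=q$, which is exactly what you carry out. Your additional observation that $\sum_{n=1}^{N_T}\norm{\U}{(H^{q}(I_n))^{N_h}}^2$ reassembles into a fixed broken Sobolev norm of $\U$, so that no hidden $\Delta t$-dependence accumulates as $N_T=T/\Delta t$ grows, correctly fills in the only step left implicit behind the $O(\Delta t^{\beta-3/2})$ claim.
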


\section{VEM-DG discretization}
\label{sec:VEM-DG}

In this section we present a tensor product-based space-time discretization of problem \eqref{eq:pde_weak} that combines the VEM presented in Section \ref{sec:VEM_space_discretization} for space discretization, with the DG scheme presented in Section \ref{sec:DG_time} for time integration. The mesh $\Q$ for the space-time domain $\Omega\times (0,T]$ is constructed by tensorizing the polygonal grid $\T_h$ with the time interval partition $\I_{\Tau}$, namely, $\Q\coloneqq\T_h\otimes\I_{\Tau}$. Each element of the space-time mesh $\Q$ is the tensor product of the polygonal mesh $\T_h$ with $I_n$, i.e.,
\begin{gather}
	\Q=\cup_{n=1}^{N_T} Q_n, \textrm{with }Q_n\coloneqq\T_h\otimes I_n \textrm{ for all }n=1,\ldots,N_T.
\end{gather}
\begin{figure}
	\begin{subfigure}[c]{0.45\textwidth}
         \includegraphics[width=\textwidth]{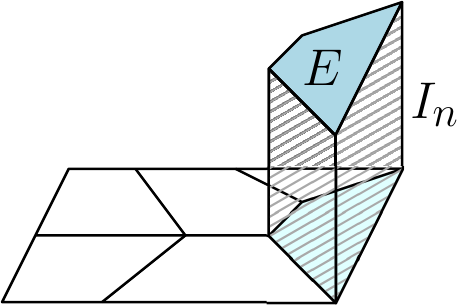}
         \caption{}
     \end{subfigure}
     \hfill
     \begin{subfigure}[c]{0.45\textwidth}
         \centering
         \includegraphics[width=0.8\textwidth]{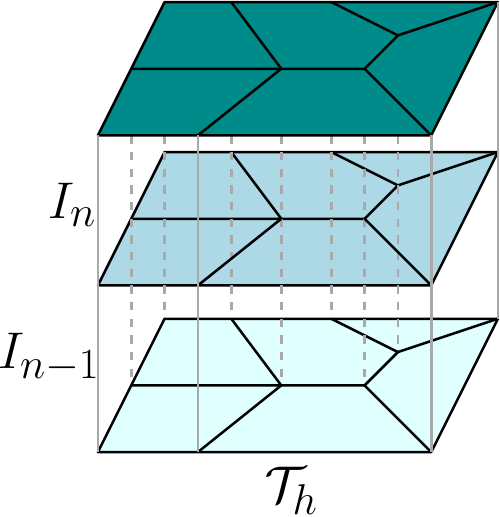}
         \caption{}
     \end{subfigure}
     \caption{(A) Polygon $E\in\T_h$ tensorized with the time-slab $I_n$. (B) Polygonal mesh $\T_h$ tensorized with $I_{n-1}\cup I_n$, namely, $Q_{n-1}\cup Q_n$. Darker color encodes the increasing time instances.}
        \label{fig:tp_mehs}	
\end{figure}
We refer to Figure \ref{fig:tp_mehs} for an example..

The tensor product of the VE space $W_h$ defined in \eqref{eq:VEM_space_global} with the DG space $W_{\Tau}$ defined in \eqref{eq:DG_space} gives the following finite-dimensional space
\begin{gather}
	\W_{h,\Tau} \coloneqq\left\{
	w(x,t)=w_1(x)w_2(t)\colon \T_h\times \I_{\Tau}\rightarrow \R\textrm{ such that }w_1\in W_h \textrm{ and }w_2\in W_{\Tau}
	\right\}.
\end{gather}
Note that, by definition, each $w\in\W_{h,\Tau}$ is continuous in the spatial domain but might be discontinuous in the time domain, i.e., discontinuities are allowed along the interfaces $\T_h\otimes\{t_n\}$, for $n=1,\ldots,N_T-1$. 

To derive the tensor product VEM-DG formulation of the problem of interest, we start from equation~\eqref{eq:pde} in $\Q_n$ multiplied by a test function $\dot w=w_1(x)\dot w_2(t)\in \W_{h,\Tau}$ and we integrate in space and time. Then, we integrate by parts with respect to the space variable and we replace the $L^2(\Omega)$-inner product $\sp{\bullet}{\bullet}$ and the bilinear form $a(\bullet,\bullet)$ with the VE bilinear forms $m_h(\bullet,\bullet)$ and $a_h(\bullet,\bullet)$, respectively. Finally, we add the null terms 
\begin{gather*}
	m_h(\jump{\dot u}{n},\dot w(t^+_n)) + a_h( \jump{u}{n}, \dot w(t^+_n))
\end{gather*}
 and we sum up over all time-slabs. As a result, we get the following problem: find $\ufd\in\W_{h,\Tau}$ such that, for all $w\in \W_{h,\Tau}$ there holds
\begin{gather}
	\label{eq:fd_formulation}
	\A(\ufd,w)=\mathcal F(w),
\end{gather}
where the bilinear form $\A\colon \W_{h,\Tau}\times \W_{h,\Tau}\rightarrow\R$ and the linear form $\mathcal F\colon\W_{h,\Tau}\rightarrow\R$  are respectively given by
\begin{gather*}
	\begin{aligned}
		\A(v,w)&\coloneqq
		\sum_{n=1}^{N_T}\Big[
			m_h(v_1,w_1) \spT{\ddot v_2}{\dot w_2} 
			+ \nu m_h(v_1,w_1) \spT{\dot v_2}{\dot w_2}\\
            &\quad \quad+ a_h(v_1,w_1) \spT{v_2}{\dot w_2}
		\Big]\\
		&\quad + \sum_{n=1}^{N_T-1}\Big[
			m_h(v_1,w_1)\jump{\dot v_2}{n} \dot w_2(t^+_n) +a_h(v_1,w_1) \jump{v_2}{n} w_2(t^+_n)
		\Big]\\
		&\quad+ m_h(v_1,w_1) \dot v_2(0^+) \dot w_2(0^+) + a_h(v_1,w_1) v_2(0^+)w_2(0^+),
	\end{aligned}
\end{gather*}
and
 \begin{gather*}
 	\begin{aligned}
		\mathcal F(w)&\coloneqq
		\sum_{n=1}^{N_T}(f_h,w)_{L^2(\Omega\times I_n)} + m_h(z_{h,0},w_1)\dot w_2(0^+) + a_h(u_{h,0},w_1)w_2(0^+),
	\end{aligned}
 \end{gather*}
for any $v(x,t)=v_1(x)v_2(t)$ and $w(x,t)=w_1(x)w_2(t)$.

There holds the following results.
\begin{lemma}
\label{lem:triple_norm}
The function $\vertiii{\bullet}\colon H^2(0,T;H^1_0(\Omega))\rightarrow\R$ defined as
\begin{gather}
	\label{eq:triple_norm}
	\begin{aligned}
		\vertiii{w}^2&\coloneqq \nu \sum_{n=1}^{N_T}\int_{I_n}\normL{\dot w}^2\, \dt\\
		&\quad+\frac{1}{2}\normL{\dot w(0^+,\cdot)}^2 +\frac{1}{2} \sum_{n=1}^{N_T-1}\normL{\jump{\dot w}{n}}^2 +\frac{1}{2}\normL{\dot w(T^-,\cdot)}^2\\
		&\quad+\frac{1}{2}\normH{w(0^+,\cdot)}^2 +\frac{1}{2}\sum_{n=1}^{N_T-1}\normH{\jump{w}{n}}^2 +\frac{1}{2}\normH{w(T^-,\cdot)}^2
	\end{aligned}
\end{gather}
is a norm on $H^2(0,T;H^1_0(\Omega))$.
\end{lemma}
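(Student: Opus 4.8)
The plan is to verify the three norm axioms, viewing $\vertiii{\bullet}^2$ in \eqref{eq:triple_norm} as the quadratic form $B(w,w)$ associated with the symmetric bilinear form obtained by polarising each of its summands, namely $B(w,v)\coloneqq \nu\sum_{n=1}^{N_T}\int_{I_n} m_h(\dot w,\dot v)\,\dt + \tfrac12 m_h(\dot w(0^+,\cdot),\dot v(0^+,\cdot)) + \cdots + \tfrac12 a_h(w(T^-,\cdot),v(T^-,\cdot))$, where each piece is obtained from the corresponding term of \eqref{eq:triple_norm} via \eqref{eq:VEM-norms}. Since every summand of $B(w,w)$ is non-negative, $B$ is symmetric and positive semi-definite, so $\vertiii{\bullet}=\sqrt{B(\bullet,\bullet)}$ satisfies homogeneity (from bilinearity, $B(\lambda w,\lambda w)=\lambda^2 B(w,w)$) and, by the Cauchy–Schwarz inequality for positive semi-definite forms, the triangle inequality. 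Here I use that $\normL{\bullet}$ and $\normH{\bullet}$ are genuine norms: by the stability bounds \eqref{eq:stability} one has $\normL{\bullet}\simeq\norm{\bullet}{L^2(\Omega)}$ and $\normH{\bullet}\simeq\seminorm{\bullet}{H^1(\Omega)}$, the latter being a norm on $H^1_0(\Omega)$ via the Poincaré inequality. These steps are routine; the content of the lemma is positive definiteness.

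For definiteness the decisive observation is a regularity one. Because time is one-dimensional, the Bochner–Sobolev embedding $H^2(0,T;H^1_0(\Omega))\hookrightarrow C^1([0,T];H^1_0(\Omega))$ holds, so every $w$ in the domain admits continuous representatives $t\mapsto w(t,\cdot)$ and $t\mapsto\dot w(t,\cdot)$ with values in $H^1_0(\Omega)$. Consequently the one-sided traces coincide, $w(t_n^+,\cdot)=w(t_n^-,\cdot)$ and $\dot w(t_n^+,\cdot)=\dot w(t_n^-,\cdot)$, so all the jump terms $\jump{w}{n}$ and $\jump{\dot w}{n}$ in \eqref{eq:triple_norm} vanish identically on this space; moreover all pointwise-in-time evaluations in \eqref{eq:triple_norm} are well defined. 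Hence on $H^2(0,T;H^1_0(\Omega))$ the quantity $\vertiii{w}^2$ reduces to the dissipation integral plus the four endpoint traces.

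To conclude, suppose $\vertiii{w}=0$. As each summand of \eqref{eq:triple_norm} is non-negative, each vanishes separately. From the first term, using $\nu>0$ and that $\normL{\bullet}$ is a norm, I get $\int_0^T\normL{\dot w(t,\cdot)}^2\,\dt=0$, hence $\dot w(t,\cdot)=0$ for a.e.\ $t\in(0,T)$; by the $C^1$-in-time regularity $\dot w$ is continuous, so $\dot w\equiv 0$ on $[0,T]$ and $w(t,\cdot)=w(0^+,\cdot)$ for every $t$. From $\tfrac12\normH{w(0^+,\cdot)}^2=0$ and the fact that $\normH{\bullet}$ is a norm on $H^1_0(\Omega)$ I obtain $w(0^+,\cdot)=0$, whence $w\equiv 0$. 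This proves that $\vertiii{\bullet}$ is positive definite and completes the argument.

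The main obstacle is therefore not the algebra of the seminorm axioms but ensuring that the pointwise traces and jumps in \eqref{eq:triple_norm} are meaningful on $H^2(0,T;H^1_0(\Omega))$ and that the spatial forms $\normL{\bullet},\normH{\bullet}$ are positive definite: the $H^2\hookrightarrow C^1$ embedding in time settles the former (and makes the jump contributions drop out entirely), while \eqref{eq:stability} and Poincaré settle the latter. The argument parallels the proof that the discrete energy norm $\normE{\bullet}$ in \eqref{eq:energy_norm} is a norm on $[W_{\Tau}]^{N_h}$, recalled from \cite{Antonietti-Mazzieri-Migliorini2020}, but is in fact simpler, since in the continuous setting the jump terms are automatically zero.
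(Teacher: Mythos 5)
Your proof is correct for the lemma as stated, but it reaches positive definiteness by a genuinely different route than the paper. The paper never uses time-regularity of $w$: it argues slab by slab, deducing from the dissipation term that $w\equiv C_n$ on each $\Omega\times I_n$, using the trace term $\normH{w(0^+,\cdot)}$ to get $C_1=0$, and then running an induction in which the jump terms $\normH{\jump{w}{n-1}}$ are what propagate $C_n=0$ across the time nodes (mirroring Proposition 2 of \cite{Antonietti-Mazzieri-Migliorini2020}). You instead invoke the one-dimensional Bochner--Sobolev embedding $H^2(0,T;H^1_0(\Omega))\hookrightarrow C^1([0,T];H^1_0(\Omega))$, so all jumps vanish identically, $\dot w\equiv 0$ follows from the dissipation integral plus continuity, and $w\equiv 0$ follows from the initial trace alone; your justification of the seminorm axioms via the polarized positive semi-definite bilinear form is also cleaner than the paper's one-line dismissal. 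What the paper's longer argument buys is robustness: it applies verbatim to functions that are only piecewise regular in time, with genuine jumps, which is precisely how the norm is used afterwards --- coercivity $\vertiii{w}^2=\A(w,w)$ on $\W_{h,\Tau}$ in \cref{lem:coercivity}, the identification with $\normE{\bullet}$ in \cref{lem:norm_equivalence}, and the error $u-\ufd$ in \cref{thm:error_conv}, none of which involves functions lying in $H^2(0,T;H^1_0(\Omega))$. Your embedding argument cannot be transplanted to that broken-in-time setting, since there the jump terms are genuinely nonzero and are exactly what must be exploited; so while your proof settles the literal statement, the paper's slab-wise induction is the one that remains valid where the lemma is actually applied.
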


\begin{proof}
It is clear that the function $\vertiii{\bullet}$ satisfies the homogeneity and subadditivity properties. Moreover, if $w=0$, then it immediately follows that $\vertiii{w}=0$. Therefore, $\vertiii{\bullet}$ is a seminorm on $H^2(0,T;H^1_0(\Omega))$. We show that $\vertiii{w}=0$ implies $w=0$ following the same steps as in the proof of \cite[Proposition 2]{Antonietti-Mazzieri-Migliorini2020}. 

The fact that $\vertiii{w}=0$ implies that all the terms at the right-hand side of \eqref{eq:triple_norm} are zero. In particular, for all $n=1,\ldots, N_T$, there holds
\begin{gather*}
	\int_{I_n}\normL{\dot w}^2\, \dt =0,
\end{gather*}
which, in turns, implies $\dot w\equiv 0$ in $\Omega\times I_n$, that is, $w\equiv C_n$ in $\Omega\times I_n$ for $\{C_n\}_{n=1}^{N_T}$ a collection of constants. 
For $n=1$, we have $w\equiv C_1$ in $\Omega\times I_1$. In addition, from
\begin{gather*}
	\normL{w(0^+,\cdot)}=0,
\end{gather*}
we get $w(0^+,\cdot)\equiv 0$ in $\Omega$. Hence, we conclude $C_1=0$, i.e., $w\equiv 0$ in $\Omega\times I_1$.

We proceed now by induction, namely, we assume $w\equiv 0$ in all $\Omega\times I_m$ for $m\leq n-1$, and we show that $w\equiv 0$ in $\Omega\times I_n$. From
\begin{gather*}
	\normH{\jump{w}{n-1}}=0
\end{gather*}
we get $\jump{w}{n-1}=0$, i.e., $w(t_{n-1}^+,x)=w(t_{n-1}^-,x)$ for a.e. $x\in\Omega$. Since $w(t_{n-1}^-,\cdot)\equiv 0$ in $\Omega$ by assumption, we get $w(t_{n-1}^+,\cdot)\equiv 0$ in $\Omega$, which in turns implies $C_n=0$.
\end{proof}

\begin{lemma}
\label{lem:coercivity}
For all $w(x,t)=w_1(x)w_2(t)\in \W_{h,\Tau}$ there holds
\begin{gather}
	\label{eq:coercivity}
	\vertiii{w}^2=\A(w,w).
\end{gather}
\end{lemma}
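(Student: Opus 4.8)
The plan is to exploit the tensor-product structure $w=w_1(x)w_2(t)$ to separate the spatial and temporal contributions, thereby reducing the claim to a single scalar integration-by-parts identity in time. First I would observe that, since $w$ is one product, every spatial bilinear form occurring in $\A(w,w)$ is constant in $t$ and can be factored out: by the definition \eqref{eq:VEM-norms} one has $m_h(w_1,w_1)=\normL{w_1}^2$ and $a_h(w_1,w_1)=\normH{w_1}^2$. Substituting these into $\A(w,w)$ and using $\normL{\dot w}^2=\normL{w_1}^2\,\dot w_2(t)^2$, the viscous contribution $\nu\sum_{n}m_h(w_1,w_1)\spT{\dot w_2}{\dot w_2}$ reproduces verbatim the first line $\nu\sum_{n}\int_{I_n}\normL{\dot w}^2\,\dt$ of $\vertiii{w}^2$. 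It then remains to match the two remaining groups of terms — those carrying the factor $\normL{w_1}^2$ and those carrying $\normH{w_1}^2$ — against the second and third lines of \eqref{eq:triple_norm}, respectively.

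Next I would handle both remaining groups by one and the same scalar computation. On each time-slab, integration by parts in time gives $\spT{\ddot w_2}{\dot w_2}=\tfrac12\big[\dot w_2(t_n^-)^2-\dot w_2(t_{n-1}^+)^2\big]$ and, identically, $\spT{w_2}{\dot w_2}=\tfrac12\big[w_2(t_n^-)^2-w_2(t_{n-1}^+)^2\big]$. Writing $a_n^{\pm}$ for the one-sided limits at $t_n$ of whichever scalar function is relevant ($\dot w_2$ in the mass group, $w_2$ in the stiffness group), I would sum this identity over $n=1,\dots,N_T$, add the interface contributions $\sum_{n=1}^{N_T-1}(a_n^+-a_n^-)a_n^+$ together with the initial term $(a_0^+)^2$, and collect. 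The slab sum telescopes, the interior $(a_n^+)^2$ contributions cancel, and the cross terms recombine via $\tfrac12(a_n^-)^2+\tfrac12(a_n^+)^2-a_n^-a_n^+=\tfrac12(a_n^+-a_n^-)^2=\tfrac12\jump{\cdot}{n}^2$, leaving precisely $\tfrac12(a_0^+)^2+\tfrac12\sum_{n=1}^{N_T-1}\jump{\cdot}{n}^2+\tfrac12(a_{N_T}^-)^2$. Multiplying by $\normL{w_1}^2$ (with $a_n^{\pm}=\dot w_2(t_n^{\pm})$) and by $\normH{w_1}^2$ (with $a_n^{\pm}=w_2(t_n^{\pm})$) yields exactly the second and third lines of $\vertiii{w}^2$, so that \eqref{eq:coercivity} follows.

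The only genuinely delicate point is the bookkeeping in the telescoping step: one must pair each one-sided limit $t_n^-$/$t_n^+$ with the correct slab and track signs so that the combination $\tfrac12(a_n^-)^2+\tfrac12(a_n^+)^2-a_n^-a_n^+$ assembles the squared jump and the endpoint contributions land on $0^+$ and $T^-$. Everything else is a direct substitution; and since the mass and stiffness groups are structurally identical — differing only by $\dot w_2$ versus $w_2$ and by the factored-out spatial norm — the single scalar identity suffices for both, mirroring the argument for the energy norm in \cite{Antonietti-Mazzieri-Migliorini2020}.
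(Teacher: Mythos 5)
Your proposal is correct and follows essentially the same route as the paper's proof: factor out the spatial norms $\normL{w_1}^2$, $\normH{w_1}^2$ using the tensor-product structure, apply the slab-wise integration-by-parts identities $\spT{\ddot w_2}{\dot w_2}=\tfrac12\big[\dot w_2(t_n^-)^2-\dot w_2(t_{n-1}^+)^2\big]$ and $\spT{\dot w_2}{w_2}=\tfrac12\big[w_2(t_n^-)^2-w_2(t_{n-1}^+)^2\big]$, and recombine with the jump and initial terms. Your explicit telescoping computation $\tfrac12(a_n^-)^2+\tfrac12(a_n^+)^2-a_n^-a_n^+=\tfrac12\jump{\cdot}{n}^2$ is exactly the step the paper compresses into ``performing simple computations,'' so you have in fact supplied more detail than the original.
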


\begin{proof}
Given $w(x,t)=w_1(x)w_2(t)\in \W_{h,\Tau}$, we have
\begin{gather}
	\label{eq:starting}
	\begin{aligned}
		\A(w,w)&\coloneqq
		\sum_{n=1}^{N_T}\left[
			\normL{w_1}^2 \spT{\ddot w_2}{\dot w_2} 
			+\nu \normL{w_1}^2 \norm{\dot w_2}{L^2(I_n)}^2 + \normH{w_1}^2 \spT{w_2}{\dot w_2}
		\right]\\
		&\quad + \sum_{n=1}^{N_T-1}\left[
			\normL{w_1}^2\jump{\dot w_2}{n} \dot w_2(t^+_n) + \normH{w_1}^2  \jump{v_2}{n} w_2(t^+_n)
		\right]\\
		&\quad+\normL{w_1}^2 \left(\dot w_2(0^+)\right)^2 + \normH{w_1}^2 \left(w_2(0^+)\right)^2.
	\end{aligned}
\end{gather}
Integrating by parts, we get
\begin{gather*}
	\spT{\ddot w_2}{\dot w_2} = -\spT{\dot w_2}{\ddot w_2} + \left( \dot w_2(t_n^- \right)^2 - \left( \dot w_2(t_{n-1}^+ \right)^2,
\end{gather*}
which implies 
\begin{gather}
	\label{eq:parta}
	\spT{\ddot w_2}{\dot w_2} = \frac{1}{2}\left( \dot w_2(t_n^- \right)^2 - \frac{1}{2}\left( \dot w_2(t_{n-1}^+ \right)^2.
\end{gather}
Analogously, we derive
\begin{gather}
	\label{eq:partb}
	\spT{\dot w_2}{w_2} = \frac{1}{2}\left( w_2(t_n^- \right)^2 - \frac{1}{2}\left( w_2(t_{n-1}^+ \right)^2.
\end{gather}
Inserting \eqref{eq:parta} and \eqref{eq:partb} into \eqref{eq:starting}, and performing simple computations, we derive
\begin{gather*}
	\begin{aligned}
		\A(w,w)&= 
		\nu \normL{w_1}^2\sum_{n=1}^{N_T}\norm{\dot w_2}{L^2(I_n)}^2\\
		&\quad+\frac{1}{2} \normL{w_1}^2 \left[\left(\dot w_2(0^+)\right)^2 + \sum_{n=1}^{N_T-1}\left(\jump{\dot w_2}{n}\right)^2 
			+ \left(\dot w_2(T^-)\right)^2\right]\\
		&\quad+\frac{1}{2}\normH{w_1}^2 \left[\left(w_2(0^+)\right)^2 + \sum_{n=1}^{N_T-1}\left(\jump{ w_2}{n}\right)^2 
			+ \left(w_2(T^-)\right)^2\right]
	\end{aligned}
\end{gather*}
and we conclude by observing that 
\[
	\normL{w_1}^2\sum_{n=1}^{N_T}\norm{\dot w_2}{L^2(I_n)}^2 
	= \sum_{n=1}^{N_T}\int_{I_n}m_h(\dot w,\dot w)\, \dt
	= \sum_{n=1}^{N_T}\int_{I_n}\normL{\dot w}^2\, \dt,
\]
and 
\begin{gather*}
	\begin{aligned}
		\normL{w_1}^2 \left(\dot w_2(0^+)\right)^2 &= \normL{\dot w(0^+,\cdot)}^2 \\
		\normL{w_1}^2 \left(\jump{\dot w_2}{n}\right)^2 &=\normL{\jump{\dot w}{n}}^2\\
		\normL{w_1}^2 \left(\dot w_2(T^-)\right)^2 &= \normL{\dot w(T^-,\cdot)}^2
	\end{aligned}
\end{gather*}
as well as
\begin{gather*}
	\begin{aligned}
		\normH{w_1}^2 \left(w_2(0^+)\right)^2 &= \normL{w(0^+,\cdot)}^2 \\
		\normH{w_1}^2 \left(\jump{ w_2}{n}\right)^2 &=\normL{\jump{w}{n}}^2\\
		\normH{w_1}^2 \left( w_2(T^-)\right)^2 &= \normL{w(T^-,\cdot)}^2.
	\end{aligned}
\end{gather*}
\end{proof}

\begin{theorem}[Well-posedness]
There exists a unique solution to the VEM-DG problem~\eqref{eq:fd_formulation}.
\end{theorem}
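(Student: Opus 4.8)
The plan is to exploit the finite dimensionality of the trial/test space $\W_{h,\Tau}$ together with the coercivity identity of \cref{lem:coercivity}. Since $\W_{h,\Tau}=W_h\otimes W_{\Tau}$ is finite-dimensional, problem \eqref{eq:fd_formulation} amounts to a square linear system: once a basis is fixed, $\A$ is represented by a square matrix and $\mathcal F$ by a vector. For such a system existence and uniqueness are equivalent, and both follow as soon as the associated homogeneous problem admits only the trivial solution. Hence I would reduce the statement to proving that $\A(e,w)=0$ for all $w\in\W_{h,\Tau}$ forces $e\equiv 0$.

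To establish this implication I would test with $w=e$ itself. By \cref{lem:coercivity} we have $\A(e,e)=\vertiii{e}^2$, so the homogeneous equation yields $\vertiii{e}^2=0$, and the definiteness of $\vertiii{\bullet}$ (the fact that it is a norm, \cref{lem:triple_norm}) gives $e\equiv 0$. Concretely, for uniqueness I take $e=\ufd^{(1)}-\ufd^{(2)}$, the difference of two solutions of \eqref{eq:fd_formulation}; by bilinearity $\A(e,w)=0$ for every $w\in\W_{h,\Tau}$, whence $e\equiv 0$. Existence then follows for free from finite dimensionality—injectivity of a square matrix implies surjectivity—so a unique solution exists for every right-hand side $\mathcal F$.

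The point requiring care is that \cref{lem:coercivity} and \cref{lem:triple_norm} are formulated on separable functions $w=w_1(x)w_2(t)$ and on $H^2(0,T;H^1_0(\Omega))$, respectively, whereas the solution $\ufd$ and the test difference $e$ are general elements of $\W_{h,\Tau}$, which are genuinely discontinuous across the time nodes $t_n$. I would therefore check that the coercivity identity $\A(w,w)=\vertiii{w}^2$ of \eqref{eq:coercivity} extends to all $w\in\W_{h,\Tau}$: writing $w(x,t)=\sum_{j}\varphi_j(x)c_j(t)$ in the VE basis, both sides become quadratic in the coefficient functions $c_j\in W_{\Tau}$, and the slab-wise integration by parts \eqref{eq:parta}--\eqref{eq:partb} used in the proof acts only on the temporal factors, so it carries over verbatim under the bilinear expansion. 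Likewise, the definiteness argument of \cref{lem:triple_norm}—an induction over the time-slabs driven precisely by the jump terms $\jump{\dot w}{n}$ and $\jump{w}{n}$—applies unchanged to the broken-in-time functions of $\W_{h,\Tau}$, so $\vertiii{e}=0$ still forces $e\equiv 0$.

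I expect this matching of the two lemmas (stated in the separable/conforming setting) to the actual discrete space to be the only genuine obstacle; everything else is the standard \emph{coercive square system} argument. An alternative, equally short route would be to identify \eqref{eq:fd_formulation} with the algebraic DG-in-time system \eqref{eq:fd} through the coefficient vector $\U_\Tau$, and to invoke the stability bound \eqref{eq:DG_stability}, which already encodes well-posedness of the fully discrete problem.
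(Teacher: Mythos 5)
Your proof is correct and follows essentially the same route as the paper: both rest on the coercivity identity $\A(w,w)=\vertiii{w}^2$ of Lemma~\ref{lem:coercivity} together with the definiteness of $\vertiii{\bullet}$ from Lemma~\ref{lem:triple_norm}, the paper packaging this as coercivity (with constant $1$) plus continuity of $\mathcal F$, while you spell out the equivalent finite-dimensional argument that the homogeneous square system has only the trivial solution. Your additional care in extending the two lemmas from separable products $w_1(x)w_2(t)$ to general, non-separable, broken-in-time elements of $\W_{h,\Tau}$ (needed since a difference of two solutions is not separable) addresses a point the paper's one-line proof leaves implicit, and is a refinement rather than a deviation.
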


\begin{proof}
Lemma~\ref{lem:coercivity} implies that the bilinear form $\A(\bullet,\bullet)$ is coercive, with coercivity constant 1. The continuity of $\mathcal F$ follows from Cauchy-Schwarz inequality and the continuity of the global virtual bilinear forms \eqref{eq:cont_VEM_forms}.
\end{proof}

\subsection{Algebraic formulation}

In this section we derive the algebraic formulation of the fully discrete problem \eqref{eq:fd_formulation}. We start noticing that the use of DG in time allow us to compute the discrete solution separately, one time-slab at a time. In particular, given $1\leq n\leq N_T$, problem \eqref{eq:fd_formulation} restricted to $I_n$ reads: find $\ufd^n\coloneqq \ufd|_{I_n}\in W_{h}\otimes \P_{r_n}(I_n)$ such that, for all $w\in W_{h}\otimes \P_{r_n}(I_n)$ there holds
\begin{gather}
	\label{eq:formulation_In}
	\A_n(\ufd^n,w)=\mathcal F_n(w),
\end{gather}
where 
\begin{gather*}
	\begin{aligned}
		\A_n(v,w)&\coloneqq
			m_h(v_1,w_1) \spT{\ddot v_2}{\dot w_2} + \nu m_h(v_1,w_1) \spT{\dot v_2}{\dot w_2}\\
            &\quad + a_h(v_1,w_1) \spT{v_2}{\dot w_2}
		  + m_h(v_1,w_1)\dot v_2(t_{n-1}^+) \dot w_2(t_{n-1}^+) \\
		&\quad + a_h(v_1,w_1) v_2(t_{n-1}^+) w_2(t_{n-1}^+), 
	\end{aligned}
\end{gather*}
and
 \begin{gather*}
 	\begin{aligned}
		\mathcal F_n(w)&\coloneqq
		(f_h,w)_{L^2(\Omega\times I_n)} 
		+ m_h(\dot u_{h,\Tau}^{n-1}(t_{n-1}^-,\cdot),\dot w(t_{n-1}^-,\cdot)
		+ a_h(\ufd^{n-1}(t_{n-1}^-,\cdot),w(t_{n-1}^-,\cdot),
	\end{aligned}
 \end{gather*}
for any $v(x,t)=v_1(x)v_2(t)\in W_{h}\otimes \P_{r_n}(I_n)$ and $w(x,t)=w_1(x)w_2(t)\in W_{h}\otimes \P_{r_n}(I_n)$. Note, in particular, that the solution computed for $I_{n-1}$ is used as initial condition for the current time-slab. 

Following the same notation as in Section \ref{sec:VEM_alg}, we write $W_h=span\{\varphi_{j}\}_{j=1}^{N_h}$. Moreover, we denote with $\{\psi_m\}_{m=1}^{r_n}$ a basis for $\P_{r_n}(I_n)$. Then, the trial function $\ufd^n$ can be expressed as linear combination of the tensor product basis function $\{\varphi_j\phi_m,\, j=1,\ldots,N_h,\, m=1,\ldots,r_n+1\}$, namely
\begin{gather}
	\label{eq:udf_n}
	\ufd^n(x,t)=\sum_{j=1}^{N_h}\sum_{m=1}^{r_n+1} \alpha^n_{j,m} \varphi_j(x) \psi_m(t),
\end{gather}
where $\alpha^n_{j,m}\in\R$ for all $j=1,\ldots,N_h,\, m=1,\ldots,r_n+1$. Inserting \eqref{eq:udf_n} into \eqref{eq:formulation_In} and taking $w(x,t)=\varphi_i(x)\psi_\ell(t)$, we get
\begin{gather*}
	A^n \boldsymbol\alpha^n = \mathbf{F}^n,
\end{gather*}
where 
\begin{itemize}
	\item 
	$\boldsymbol\alpha^n\in\R^{N_h(r_n+1)}$ is the solution vector;
	\item
	$A^n\in \R^{N_h(r_n+1)\times N_h(r_n+1)}$ has the following structure:
	\begin{gather*}
	A^n = M_h\otimes (N_1 + \nu N_2 + N_4) + A_h\otimes (N_3 + N_5), 
	\end{gather*}
	where $M_h,\, A_h\in\R^{N_h\times N_h}$ are the mass and stiffness matrices defined in \eqref{eq:mass_stiff_VEM}, and $N_1,\, N_2,\, N_3,\, N_4,\, N_5\in\R^{(r_n+1)\times (r_n+1)}$ are defined as
	\begin{gather*}
		\begin{aligned}
			(N_1)_{\ell,m}= (\ddot\psi_m,\dot\psi_\ell)_{L^2(I_n)},\quad (N_2)_{\ell,m}= (\dot\psi_m,\dot\psi_\ell)_{L^2(I_n)},
			\quad (N_3)_{\ell,m}= (\psi_m,\dot\psi_\ell)_{L^2(I_n)},\\
			(N_4)_{\ell,m}= \dot\psi_m(t_{n-1}^+)\dot\psi_\ell(t_{n-1}^+),\quad 
			(N_5)_{\ell,m}=\psi_m(t_{n-1}^+)\psi_\ell(t_{n-1}^+);
		\end{aligned}
	\end{gather*}
	\item 
	$\mathbf{F}^n\in \R^{N_h(r_n+1)}$ is the known vector with elements
	\begin{gather*}
	(\mathbf{F}^n)_{i,\ell}= (f_h,\varphi_i\psi_\ell)_{L^2(\Omega\times I_n)} 
		+ (M_h \otimes N_6 ) \boldsymbol\alpha^{n-1}
		+ (A_h \otimes N_7) \boldsymbol\alpha^{n-1},
	\end{gather*}
	where $N_6,\, N_7\in\R^{(r_n+1)\times (r_n+1)}$ are defined as
	\begin{gather*}
		\begin{aligned}
			(N_6)_{\ell,m}=\dot\psi_m(t_{n-1}^-)\dot\psi_\ell(t_{n-1}^-),
			\quad (N_7)_{\ell,m}=\psi_m(t_{n-1}^-)\psi_\ell(t_{n-1}^-).
		\end{aligned}
		\end{gather*}
\end{itemize}

\section{Error analysis}
\label{sec:error_analysis}

Before stating the convergence result for the tensor product VEM-DG method, we introduce the following auxiliary lemma.

\begin{lemma}
\label{lem:norm_equivalence}
Let $\ufd\in\W_{h,\Tau}$ and $\U\in [W_{\Tau}]^{N_h}$ be the solutions of problems~\eqref{eq:fd_formulation} and~\eqref{eq:fd}, respectively. Then, 
$$\vertiii{\ufd}=\normE{\U}.$$
\end{lemma}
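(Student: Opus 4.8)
The plan is to show that the coefficient vector of $\ufd$ in the VE basis is precisely $\U$, and then to match the seven contributions of $\vertiii{\ufd}^2$ with those of $\normE{\U}^2$ one by one. First I would make the link between the two solutions explicit. Writing $\ufd(x,t)=\sum_{j=1}^{N_h}U_j(t)\varphi_j(x)$ with $U_j\in W_\Tau$, and setting $\U=(U_1,\ldots,U_{N_h})\in[W_\Tau]^{N_h}$, I would insert this expansion into \eqref{eq:fd_formulation} and test against the basis functions $w=\varphi_i\psi_\ell$. Using the bilinearity of $\A$ together with $(M_h)_{i,j}=m_h(\varphi_j,\varphi_i)$ and $(A_h)_{i,j}=a_h(\varphi_j,\varphi_i)$, the choice $w=\varphi_i\psi_\ell$ reproduces exactly the $i$-th component of \eqref{eq:fd} corresponding to the test function $\V=\psi_\ell\,\mathbf{e}_i$ (volume, jump, initial, and load terms alike). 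Since $\{\varphi_i\psi_\ell\}$ spans $\W_{h,\Tau}$ and $\{\psi_\ell\mathbf{e}_i\}$ spans $[W_\Tau]^{N_h}$, the vector $\U$ so constructed solves \eqref{eq:fd}; by the uniqueness guaranteed through the stability estimate \eqref{eq:DG_stability}, it coincides with the solution in the statement.

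The second ingredient is the elementary observation that, for any spatial function $g=\sum_{j}g_j\varphi_j\in W_h$ with coefficient vector $\mathbf g=(g_1,\ldots,g_{N_h})$, the symmetry and bilinearity of $m_h$ and $a_h$ give
\begin{gather*}
	\normL{g}^2=\sum_{i,j}g_i\,m_h(\varphi_i,\varphi_j)\,g_j=\mathbf g^T M_h\mathbf g=(M_h^{1/2}\mathbf g)^2,\\
	\normH{g}^2=\mathbf g^T A_h\mathbf g=(A_h^{1/2}\mathbf g)^2.
\end{gather*}
Applying these identities slice by slice in time, the coefficient vector of $\dot\ufd(t)$ is $\dot\U(t)$, that of $\jump{\dot\ufd}{n}$ is $\jump{\dot\U}{n}$, and likewise for $\dot\ufd(0^+)$, $\dot\ufd(T^-)$ and for $\ufd$ itself. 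Hence each of the three $\normL{\cdot}$ terms in \eqref{eq:triple_norm} equals the corresponding $(M_h^{1/2}\cdot)^2$ term in \eqref{eq:energy_norm}, each $\normH{\cdot}$ term equals the corresponding $(A_h^{1/2}\cdot)^2$ term, and for the volume contribution $\int_{I_n}\normL{\dot\ufd}^2\dt=\int_{I_n}\dot\U(t)^T M_h\dot\U(t)\dt=\norm{M_h^{1/2}\dot\U}{L^2(I_n)}^2$. Summing over $n$ and collecting all contributions yields $\vertiii{\ufd}^2=\normE{\U}^2$, which is the claim.

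I expect the only delicate point to be the first step, namely verifying that the coefficient vector of the VEM-DG solution is genuinely the DG solution $\U$; this rests on the exact correspondence between the test functions of the two formulations together with the uniqueness of both solutions. Once this identification is in place, the norm equality is a purely algebraic, term-by-term computation requiring no further estimates.
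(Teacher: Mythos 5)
Your proof is correct and follows essentially the same route as the paper: both expand $\ufd$ in the VE basis, identify its coefficient vector with $\U$, and then match each term of $\vertiii{\ufd}^2$ with the corresponding term of $\normE{\U}^2$ through the algebraic identities $\normL{g}^2=\mathbf{g}^T M_h\mathbf{g}$ and $\normH{g}^2=\mathbf{g}^T A_h\mathbf{g}$. The only difference is that you explicitly verify that the coefficient vector of $\ufd$ actually solves \eqref{eq:fd} (by testing with $\varphi_i\psi_\ell$ and invoking uniqueness), a step the paper's proof passes over silently when it simply sets $\U(t)=[U_1,\ldots,U_{N_h}]^T u_2(t)$, so your write-up is, if anything, slightly more complete.
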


\begin{proof}
We follow the same reasoning as in~\cite[Proposition 3]{Antonietti-Mazzieri-Migliorini2020}. We write $\ufd(x,t)=u_1(x)u_2(t)$, with $u_2\in W_{\Tau}$ and $u_1(x)=\sum_{j=1}^{N_h}U_j\varphi_j(x)$, $\{\varphi_j\}_{j=1}^{N_h}$ being the VE basis functions. We set $\U(t)=[U_1,\ldots,U_{N_h}]^Tw_2(t)\in[W_{\Tau}]^{N_h}$. By definition~\eqref{eq:triple_norm}, we have
\begin{gather}
	\label{eq:norm_equiv_p1}
	\begin{aligned}
		\vertiii{\ufd}^2
		&=\nu \sum_{n=1}^{N_T}\int_{I_n}\normL{\dot w}^2\, \dt\\
		&\quad+\frac{1}{2}\normL{\dot \ufd(0^+,\cdot)}^2 +\frac{1}{2} \sum_{n=1}^{N_T-1}\normL{\jump{\dot \ufd}{n}}^2 +\frac{1}{2}\normL{\dot \ufd(T^-,\cdot)}^2\\
		&\quad+\frac{1}{2}\normH{\ufd(0^+,\cdot)}^2 +\frac{1}{2}\sum_{n=1}^{N_T-1}\normH{\jump{\ufd}{n}}^2 +\frac{1}{2}\normH{\ufd(T^-,\cdot)}^2.
	\end{aligned}
\end{gather}
We observe that
\begin{gather*}
	\begin{aligned}
		\int_{I_n}\normL{\dot w}^2\, \dt 
		&= \int_{I_n} \sum_{i,j=1}^{N_h} \left( U_i u_2(t) \right)(M_h)_{i,j} \left( U_j u_2(t) \right)\, \dt\\
		&=\int_{I_n} \U(t)^T M_h \U(t)\, \dt = \norm{M_h^{1/2}\U}{L^2(I_n)}^2.
	\end{aligned}
\end{gather*}
Hence, $\displaystyle{\sum_{n=1}^{N_T}\int_{I_n}\normL{\dot w}^2\, \dt = \sum_{n=1}^{N_T} \norm{M_h^{1/2}\U}{L^2(I_n)}^2}$.
We now focus on the terms in the second line of~\eqref{eq:norm_equiv_p1}. We have
\begin{gather*}
	\begin{aligned}
	\normL{\dot \ufd(0^+,\cdot)}^2 &= \normL{u_1}^2(\dot u_2(0^+))^2
	= \sum_{i,j=1}^{N_h} \left(U_i \dot u_2(0^+)\right) (M_h)_{i,j}\left(U_i \dot u_2(0^+)\right)\\
	&= \left(M_h^{1/2}\dot \U(0^+)\right)^2,
	\end{aligned}
\end{gather*}
and, similarly, we find $\normL{\dot \ufd(T^-,\cdot)}^2 = \left(M_h^{1/2}\dot \U(T^-)\right)^2$. Moreover,
\begin{gather*}
	\begin{aligned}
	\normL{\jump{\dot \ufd}{n}}^2 &= \normL{u_1}^2(\jump{\dot u_2}{n})^2
	= \sum_{i,j=1}^{N_h} \left(U_i\jump{\dot u_2}{n}\right) (M_h)_{i,j}\left(U_i \jump{\dot u_2}{n}\right)
	= \left(M_h^{1/2}\jump{\dot \U}{n}\right)^2.
	\end{aligned}
\end{gather*}
We conclude observing that the terms in the third line of~\eqref{eq:norm_equiv_p1} can be treated analogously. 
\end{proof}

\begin{remark}
\label{rem:norm_equiv}
Lemma~\ref{lem:norm_equivalence} extends to $e_h\coloneqq u_h-\ufd$, $u_h$ being the solution to the semi-discrete problem~\eqref{eq:pde_weak_sd}.
\end{remark}

\begin{theorem}[Error estimate]
\label{thm:error_conv}
Let the assumptions of Theorem \ref{thm:VEM_estimate1} and Theorem \ref{thm:DG_error} hold.
Then, there holds
\begin{equation}
	\label{eq:error}
	\begin{aligned}
	\vertiii{u-\ufd}
	&\lesssim T
	\Bigg[\sum_{n=1}^{N_T}\frac{\tau_n^{2\beta_n-3}}{r_n^{2q_n-6}}
	\left(\normH{u_{h,0}}^2 + \normL{z_{h,0}}^2 + |f_h|^2_{H^{q_n}(0,t;L^2(\Omega))} \right)\Bigg]^{1/2}\\
	& +  T\, h^{k}
	\Bigg[
	\seminorm{u_0}{H^{k+1}(\Omega)}^2 
	+ h^2 \seminorm{z_0}{H^{k+1}(\Omega)}^2 
	+ h^2 \seminorm{f}{L^2(0,T;H^{k+1}(\Omega))}^2\\
	&\quad\quad + \seminorm{u_{t}}{L^1(0,T;H^{k+1}(\Omega))}^2 
	 + h^2 \seminorm{u_{t}}{L^2(0,T;H^{k+1}(\Omega))}^2 \\
	 &\quad\quad+ h^2 \seminorm{u_{tt}}{L^1(0,T;H^{k+1}(\Omega))}^2 
	 +  h^2 \seminorm{u_{tt}}{L^2(0,T;H^{k+1}(\Omega))}^2 
	\Bigg]^{1/2}.
	\end{aligned}
\end{equation}
\end{theorem}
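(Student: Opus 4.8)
The plan is to interpolate between $u$ and $\ufd$ through the semi-discrete solution $u_h$ of~\eqref{eq:pde_weak_sd}, splitting
\[
u-\ufd=(u-u_h)+(u_h-\ufd)
\]
and using the triangle inequality $\vertiii{u-\ufd}\le\vertiii{u-u_h}+\vertiii{u_h-\ufd}$. The first term carries the spatial (VEM) error and is handled by Theorem~\ref{thm:VEM_estimate1}; the second carries the temporal (DG) error and is handled by Theorem~\ref{thm:DG_error}. These two contributions generate, respectively, the second and first bracket of~\eqref{eq:error}.

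For the temporal term I would first apply Remark~\ref{rem:norm_equiv}: expanding $u_h$ and $\ufd$ in the VE basis identifies $\vertiii{u_h-\ufd}$ with $\normE{\U-\U_\Tau}$, where $\U$ is the algebraic representation of $u_h$ solving the ODE system~\eqref{eq:algebraic_form_sd} and $\U_\Tau$ is its DG approximation~\eqref{eq:fd}. Theorem~\ref{thm:DG_error} then yields
\[
\vertiii{u_h-\ufd}^2=\normE{\U-\U_\Tau}^2\lesssim\sum_{n=1}^{N_T}\frac{\tau_n^{2\beta_n-3}}{r_n^{2q_n-6}}\norm{\U}{(H^{q_n}(I_n))^{N_h}}^2.
\]
It remains to replace the temporal Sobolev norm of the semi-discrete solution by the data. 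To this end I would use the stability theory: the zeroth- and first-order time derivatives of $u_h$ are estimated by Theorem~\ref{thm:VEM_stability}, while for each $2\le j\le q_n$ the function $\partial_t^j u_h$ solves the same semi-discrete problem with vanishing initial data and forcing $\partial_t^j f_h$, so Remark~\ref{rem:VEM_stability_q} and~\eqref{eq:VEM_stability_q} bound $\normH{\partial_t^j u_h(t)}^2+\normL{\partial_t^{j+1}u_h(t)}^2$ by $|f_h|^2_{H^{j}(0,t;L^2(\Omega))}$. Inserting these pointwise-in-time bounds and integrating over each slab converts $\norm{\U}{(H^{q_n}(I_n))^{N_h}}^2$ into $\normH{u_{h,0}}^2+\normL{z_{h,0}}^2+|f_h|^2_{H^{q_n}(0,t;L^2(\Omega))}$; the prefactor $T$ in the first bracket of~\eqref{eq:error} then arises from controlling the slab integrals by the global-in-time stability bounds.

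For the spatial term I would exploit that both $u$ and $u_h$ are continuous in time, so that every jump $\jump{u-u_h}{n}$ vanishes and all jump contributions in the definition~\eqref{eq:triple_norm} of $\vertiii{\bullet}$ drop out. What remains is the $\nu$-weighted integral $\sum_{n}\int_{I_n}\normL{\partial_t(u-u_h)}^2\dt$ together with the endpoint terms $\normL{\partial_t(u-u_h)}^2$ and $\normH{u-u_h}^2$ at $0^+$ and $T^-$. By the norm equivalence~\eqref{eq:stability} these are controlled, pointwise in $t$, by $\norm{u_t(t)-u_{h,t}(t)}{L^2(\Omega)}^2$ and $\norm{u(t)-u_h(t)}{H^1(\Omega)}^2$, which are exactly the quantities bounded in Theorem~\ref{thm:VEM_estimate1}. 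Since that estimate is $O(h^k)$ with the stated data-dependence, integrating over $(0,T)$ produces the second bracket of~\eqref{eq:error}, again with a prefactor $T$ coming from the time integration. Summing the two estimates gives~\eqref{eq:error}.

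The main obstacle is the temporal step, and specifically the passage from $\norm{\U}{(H^{q_n}(I_n))^{N_h}}^2$ to the data-dependent right-hand side. This requires differentiating the semi-discrete equation repeatedly in time, applying~\eqref{eq:VEM_stability_q} to every derivative, and carefully reconciling the algebraic (Euclidean) norm appearing in Theorem~\ref{thm:DG_error} with the $m_h$- and $a_h$-weighted VEM norms used in~\eqref{eq:error}, all while keeping the hidden constants independent of $h$ and $\Tau$ and correctly tracking the powers of $\tau_n$ and $T$. By comparison, once the vanishing of the time-jumps is noticed, the spatial step is essentially a direct consequence of Theorem~\ref{thm:VEM_estimate1}.
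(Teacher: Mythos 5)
Your proposal is correct and follows essentially the same route as the paper's proof: the same splitting $u-\ufd=(u-u_h)+(u_h-\ufd)$ through the semi-discrete solution, the identification $\vertiii{u_h-\ufd}=\normE{\U-\U_\Tau}$ via Lemma~\ref{lem:norm_equivalence} and Remark~\ref{rem:norm_equiv} followed by Theorem~\ref{thm:DG_error} and the stability results (Theorem~\ref{thm:VEM_stability}, Remark~\ref{rem:VEM_stability_q}) to pass from $\norm{\U}{(H^{q_n}(I_n))^{N_h}}$ to the data, and the observation that the time-continuity of $u-u_h$ kills all jump terms so that Theorem~\ref{thm:VEM_estimate1} controls the remaining spatial contributions. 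The only cosmetic difference is that the paper bounds the terms at $t=0^+$ directly by the interpolation estimate~\eqref{eq:inter-estimate} rather than by Theorem~\ref{thm:VEM_estimate1}, which yields the same powers of $h$.
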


\begin{proof}
Let $u_h(t)\in C^0\left(0,T; W_h\right)\cap C^1\left(0,T;W_h\right)$ be the solution to the semi-discrete problem~\eqref{eq:pde_weak_sd}. Then, we split the error $e\coloneqq u-\ufd=(u-u_h)+(u_h-\ufd)$, where $e_h\coloneqq u-u_h$ is the error due to the space approximation by means of the VEM, and $e_\Tau\coloneqq u_h-\ufd$ is the error due to the DG time discretization. By triangular inequality, we have
\begin{gather}
	\label{eq:error_p1}
	\vertiii{u-\ufd} \leq \vertiii{e_h} + \vertiii{e_\Tau}.
\end{gather}
We start bounding the second contribution to the norm of the error. Applying Lemma~\ref{lem:norm_equivalence} (and Remark~\ref{rem:norm_equiv}) and Theorem~\ref{thm:DG_error}, we find
\begin{gather*}
	\vertiii{e_\Tau}^2 
	\lesssim \sum_{n=1}^{N_T}\frac{\tau_n^{2\beta_n-3}}{r_n^{2q_n-6}}\norm{\U}{(H^{q_n}(I_n))^{N_h}}^2,
\end{gather*}
and by definition of the $H^{q_n}(I_n)$-norm we get
\begin{gather*}
	\vertiii{e_\Tau}^2 
	\lesssim \sum_{n=1}^{N_T}\frac{\tau_n^{2\beta_n-3}}{r_n^{2q_n-6}}
	\int_{I_n}\left( \norm{u_h(t)}{L^2(\Omega)}^2 + \cdots + \norm{\partial_t^{q_n} u_h(t)}{L^2(\Omega)}^2\right)\, \dt,
\end{gather*}
where $\partial_t^{q_n} u_h$ is the $q_n$-th time derivative of $u_h$.
Applying the Poincar\'e inequality, \eqref{eq:stability} and Theorem \ref{thm:VEM_stability}, we find
\begin{gather*}
	\begin{aligned}
	\norm{u_h(t)}{L^2(\Omega)}^2
	&\lesssim \seminorm{u_h(t)}{H^1(\Omega)}^2
	\lesssim \normH{u_h(t)}\\
	&\lesssim \left( \normH{u_{h,0}}^2 + \normL{z_{h,0}}^2 +\norm{f_h}{L^2(0,t,L^2(\Omega))}^2 \right),
	\end{aligned}
\end{gather*}
so that
\begin{gather*}
	\int_{I_n}\norm{u_h(t)}{L^2(\Omega)}^2\, \dt 
	\lesssim T \left( \normH{u_{h,0}}^2 + \normL{z_{h,0}}^2 +\norm{f_h}{L^2(0,T,L^2(\Omega))}^2 \right).
\end{gather*}
Similarly, applying the Poincar\'e inequality, \eqref{eq:stability} and Remark~\ref{rem:VEM_stability_q}, for all $1\leq \alpha\leq q_n$ integer, we obtain
\begin{gather*}
	\int_{I_n}\norm{\partial_t^{q_n} u_h(t)}{L^2(\Omega)}^2\, \dt 
	\lesssim  T\norm{\partial_t^{q_n} f_h}{L^2(0,T,L^2(\Omega))}^2.
\end{gather*}
Hence, we have shown that
\begin{gather}
	\label{eq:t_err}
	\vertiii{e_\Tau}^2
	\lesssim T \sum_{n=1}^{N_T}\frac{\tau_n^{2\beta_n-3}}{r_n^{2q_n-6}}
	\left(\normH{u_{h,0}}^2 + \normL{z_{h,0}}^2 + |f_h|^2_{H^{q_n}(0,T;L^2(\Omega))} \right).
\end{gather}

We consider now the error due to the VEM approximation in space. Recalling that $e_h\in C^1(0,T;W_h)$, we have
\begin{gather}
	\label{eq:s_err0}
	\begin{aligned}
		\vertiii{e_h}^2
			&= \sum_{n=1}^{N_T}\int_{I_n}\normL{\dot e_h}^2\, \dt
			+\frac{1}{2}\normL{\dot e_h(0^+)}^2 +\frac{1}{2}\normL{\dot e_h(T^-)}^2\\
		&\quad+\frac{1}{2}\normH{e_h(0^+)}^2 +\frac{1}{2}\normH{e_h(T^-)}^2.
	\end{aligned}
\end{gather}
Since $u_{h,0}$ is the interpolant of degree $k$ of $u_0$, thanks to~\eqref{eq:inter-estimate} there holds
\begin{gather}
	\label{eq:s_err1}
	\normH{e_h(0^+)}=\normH{u_0-u_{h,0}}\lesssim h^k \seminorm{u_0}{H^{k+1}(\Omega)},
\end{gather}
and similarly, since $z_{h,0}$ is the interpolant of degree $k$ of $z_0$
\begin{gather}
	\label{eq:s_err2}
	\normL{\dot e_h(0^+)}=\normL{z_0-z_{h,0}}\lesssim h^{k+1} \seminorm{z_0}{H^{k+1}(\Omega)}.
\end{gather}
Using \eqref{eq:stability} and Theorem \ref{thm:VEM_estimate1}, we find:
\begin{gather}
\label{eq:s_err3}
	\begin{aligned}
	&\normH{e_h(T^-)}^2 + \normL{\dot e_h(T^-)}^2
	= \normH{u(T^-)-u_{h}(T^-)}^2 + \normL{u_t(T^-)-u_{h,t}(T^-)}^2\\
	&\quad\lesssim \norm{u(T^-)-u_{h}(T^-)}{H^1(\Omega)}^2 +  \norm{u_t(T^-)-u_{h,t}(T^-)}{L^2(\Omega)}^2\\
	&\quad\lesssim h^{2k}\Big( 
	\seminorm{u_0}{H^{k+1}(\Omega)}^2 
	+ h^2 \seminorm{z_0}{H^{k+1}(\Omega)}^2 
	+ h^2 \seminorm{f}{L^2(0,T;H^{k+1}(\Omega))}^2\\
	&\quad\quad + \seminorm{u_{t}}{L^1(0,T;H^{k+1}(\Omega))}^2 
	 + h^2 \seminorm{u_{t}}{L^2(0,T;H^{k+1}(\Omega))}^2 
	 + h^2 \seminorm{u_{tt}}{L^1(0,T;H^{k+1}(\Omega))}^2 
	 +  h^2 \seminorm{u_{tt}}{L^2(0,T;H^{k+1}(\Omega))}^2 
	 \Big).
	\end{aligned}
\end{gather}

Finally, thanks to \eqref{eq:stability} and Theorem \ref{thm:VEM_estimate1}, we find:
\begin{gather}
	\label{eq:s_err5}
	\begin{aligned}
	&\int_{I_n}\normL{\dot e_h(s)}^2\, \ds
	=\int_{I_n} \normL{u_t(s)-u_{h,t}(s)}^2\, \ds
	\lesssim \int_{I_n} \norm{u_t(s)-u_{h,t}(s)}{L^2(\Omega)}^2\, \ds\\
	&\quad \lesssim \tau_n h^{2k}\Big( 
	\seminorm{u_0}{H^{k+1}(\Omega)}^2 
	+ h^2 \seminorm{z_0}{H^{k+1}(\Omega)}^2 
	+ h^2 \seminorm{f}{L^2(0,T;H^{k+1}(\Omega))}^2\\
	&\quad\quad + \seminorm{u_{t}}{L^1(0,T;H^{k+1}(\Omega))}^2 
	 + h^2 \seminorm{u_{t}}{L^2(0,T;H^{k+1}(\Omega))}^2 
	 + h^2 \seminorm{u_{tt}}{L^1(0,T;H^{k+1}(\Omega))}^2 
	 +  h^2 \seminorm{u_{tt}}{L^2(0,T;H^{k+1}(\Omega))}^2 
	 \Big).
	\end{aligned}
\end{gather}
Inserting \eqref{eq:s_err1}, \eqref{eq:s_err2}, \eqref{eq:s_err3} and \eqref{eq:s_err5} into \eqref{eq:s_err0}, and using that $\sum_{n=1}^{N_T}\tau_n=T$, we obtain
\begin{gather}
	\label{eq:s_err}
	\begin{aligned}
	&\vertiii{e_h}^2 \lesssim T h^{2k}\Big( 
	\seminorm{u_0}{H^{k+1}(\Omega)}^2 
	+ h^2 \seminorm{z_0}{H^{k+1}(\Omega)}^2 
	+ h^2 \seminorm{f}{L^2(0,T;H^{k+1}(\Omega))}^2\\
	&\quad\quad + \seminorm{u_{t}}{L^1(0,T;H^{k+1}(\Omega))}^2 
	 + h^2 \seminorm{u_{t}}{L^2(0,T;H^{k+1}(\Omega))}^2 
	 + h^2 \seminorm{u_{tt}}{L^1(0,T;H^{k+1}(\Omega))}^2 
	 +  h^2 \seminorm{u_{tt}}{L^2(0,T;H^{k+1}(\Omega))}^2 
	 \Big).
	 \end{aligned}
\end{gather}
The final result \eqref{eq:error} follows from \eqref{eq:t_err} and \eqref{eq:s_err}.
\end{proof}

\begin{corollary}
\label{cor:err_DG_VEM}
Let $u\in C^2\left(0,T; H^1_0(\Omega)\right)$, $u_0,\, z_0\in H^{k+1}(\Omega)$ and $u_t,\, u_{tt},\, f\in L^2\left(0,T;H^{k+1}(\Omega)\right)$, with $k\geq 1$ integer. Moreover, let $u \in H^{q}\left (I_n; H^1_0(\Omega)\right)$ for all $n=1,\ldots, N_T$, with $q\geq 2$, with $\tau_n=\Delta t>0$ and $r_n=r\in \N$ for all $n=1,\ldots,N_T$. Then, 
\begin{equation*}
	\vertiii{u-\ufd} = O(\Delta t^{\beta-3/2} + h^k)
\end{equation*}
as $\Delta t$ and $h$ decrease to 0.
\end{corollary}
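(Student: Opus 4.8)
The plan is to read off \Cref{thm:error_conv} and specialize its two error contributions to the uniform setting $\tau_n=\Delta t$, $r_n=r$, $q_n=q$ (so that $\beta_n=\beta=\min\{r+1,q\}$ for every $n$), treating the spatial and temporal parts separately. First I would invoke the triangular splitting $\vertiii{u-\ufd}\le\vertiii{e_h}+\vertiii{e_\Tau}$ already used in the proof of \Cref{thm:error_conv}, with $e_h=u-u_h$ and $e_\Tau=u_h-\ufd$, and bound each piece, the claim then being $\vertiii{e_h}=O(h^{k})$ and $\vertiii{e_\Tau}=O(\Delta t^{\beta-3/2})$.

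For the spatial term $\vertiii{e_h}$ I would use the bound \eqref{eq:s_err}: under the standing regularity hypotheses ($u_0,z_0\in H^{k+1}(\Omega)$ and $u_t,u_{tt},f\in L^2(0,T;H^{k+1}(\Omega))$, together with the $L^1$-in-time seminorms appearing there) every seminorm on its right-hand side is finite and independent of $h$ and $\Delta t$. Hence the whole bracket is an $O(1)$ constant, $\vertiii{e_h}\lesssim h^{k}$, and this half is entirely routine.

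For the temporal term I would avoid the crude global-stability estimate \eqref{eq:t_err} --- which, after inserting $N_T=T/\Delta t$, carries a spurious factor $N_T$ and thus degrades the rate by $\Delta t^{-1/2}$ --- and instead exploit the exact identity of \Cref{lem:norm_equivalence} together with \Cref{rem:norm_equiv}: writing $\U_h\in[W_\Tau]^{N_h}$ for the coefficient vector of the semidiscrete solution $u_h$, one has $\vertiii{e_\Tau}=\normE{\U_h-\U_\Tau}$. Applying \Cref{cor:err_DG} with the uniform parameters gives $\normE{\U_h-\U_\Tau}^2\lesssim \tfrac{\Delta t^{2\beta-3}}{r^{2q-6}}\sum_{n=1}^{N_T}\norm{\U_h}{(H^{q}(I_n))^{N_h}}^2$. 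The decisive point is that the per-slab norms telescope, $\sum_{n=1}^{N_T}\norm{\U_h}{(H^{q}(I_n))^{N_h}}^2=\norm{\U_h}{(H^{q}(0,T))^{N_h}}^2$, so there is no $N_T$-growth and $\vertiii{e_\Tau}=O(\Delta t^{\beta-3/2})$.

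The hard part will be justifying that this global Sobolev-in-time norm $\norm{\U_h}{(H^{q}(0,T))^{N_h}}^2$ is finite, i.e.\ that the semidiscrete solution inherits the temporal regularity $u_h\in H^{q}(0,T;W_h)$ required by \Cref{cor:err_DG}. I would obtain this from the algebraic system \eqref{eq:algebraic_form_sd}: since $M_h$ is invertible and the system is a linear constant-coefficient second-order ODE, $\U_h$ is as smooth in time as its forcing $\F_h$ permits, and differentiating the equation repeatedly and invoking the stability of the differentiated problem (cf.\ \Cref{rem:VEM_stability_q}) controls $\partial_t^{q}u_h$ by the time derivatives of $f_h$. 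Once both pieces are in hand, combining $\vertiii{e_h}=O(h^{k})$ with $\vertiii{e_\Tau}=O(\Delta t^{\beta-3/2})$ via the triangular inequality yields $\vertiii{u-\ufd}=O(\Delta t^{\beta-3/2}+h^{k})$ as $\Delta t,h\to 0$.
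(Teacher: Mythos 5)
Your proposal is correct, and it is in fact more careful than the paper's own (implicit) derivation, which presents \cref{cor:err_DG_VEM} as an immediate specialization of \cref{thm:error_conv}. The point you flagged is real: substituting $\tau_n=\Delta t$, $r_n=r$, $q_n=q$ directly into \eqref{eq:error} does \emph{not} give the advertised rate, because in \eqref{eq:t_err} each per-slab norm $\norm{\U}{(H^{q_n}(I_n))^{N_h}}^2$ has been bounded by a global, $n$-independent data constant; the sum over $n$ then contributes a factor $N_T=T/\Delta t$, and the theorem's statement only yields $\vertiii{e_\Tau}=O(\Delta t^{\beta-2})$. Your repair --- returning to \cref{lem:norm_equivalence} (with \cref{rem:norm_equiv}) and \cref{cor:err_DG}, and using that the broken norms telescope, $\sum_{n}\norm{\U}{(H^{q}(I_n))^{N_h}}^2=\norm{\U}{(H^{q}(0,T))^{N_h}}^2=O(1)$ --- is exactly what recovers $\Delta t^{\beta-3/2}$, and is what \cref{cor:err_DG} itself implicitly does; an equivalent fix is to keep the factor $\tau_n$, rather than enlarging it to $T$, when bounding $\int_{I_n}\norm{\partial_t^{j}u_h(t)}{L^2(\Omega)}^2\,\dt$ in the proof of \cref{thm:error_conv}, since summing $\tau_n$ over $n$ produces a single factor $T$ instead of $N_T$ copies of it. Your final step, deducing $u_h\in H^{q}(0,T;W_h)$ from the constant-coefficient linear system \eqref{eq:algebraic_form_sd} so that the global norm is finite, is the right way to legitimize the telescoping; note that it requires time regularity of $f_h$ (as in \cref{rem:VEM_stability_q}), a hypothesis the corollary does not state explicitly, but the paper's own proof of \cref{thm:error_conv} relies on the same unstated assumption, so your argument is no more demanding. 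The spatial half of your argument coincides with the paper's bound \eqref{eq:s_err}.
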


\section{Numerical tests}
\label{sec:numerical_tests}

All the numerical tests are performed in Matlab, and make use of the VEM code available at \cite{Borio2020} for spatial discretization. For DG in time, we refer to~\cite{Antonietti-Mazzieri-Migliorini2020}. The meshes are generated using the code Polymesher~\cite{Talischi-Paulino-Pereira-Menezes2012}.

\subsection{Verification test}

 As verification test, we consider equation~\eqref{eq:pde} on $\Omega\times (0,T]=(0,1)^2\times (0,1]$, where $\nu =1$ and the loading term $f$ as well as the initial conditions $u_0,\, z_0$ are chosen so that 
\begin{gather}
	\label{eq:u_ex}
	u_{ex}(t,x_1,x_2)\coloneqq \sin(t^2)\sin(\pi x_1)\sin(\pi x_2)
\end{gather}
is the unique solution of the problem (see Figure~\ref{fig:exact_sol_ex1}). 

\begin{figure}
	\begin{subfigure}[b]{0.45\textwidth}
         \includegraphics[width=\textwidth]{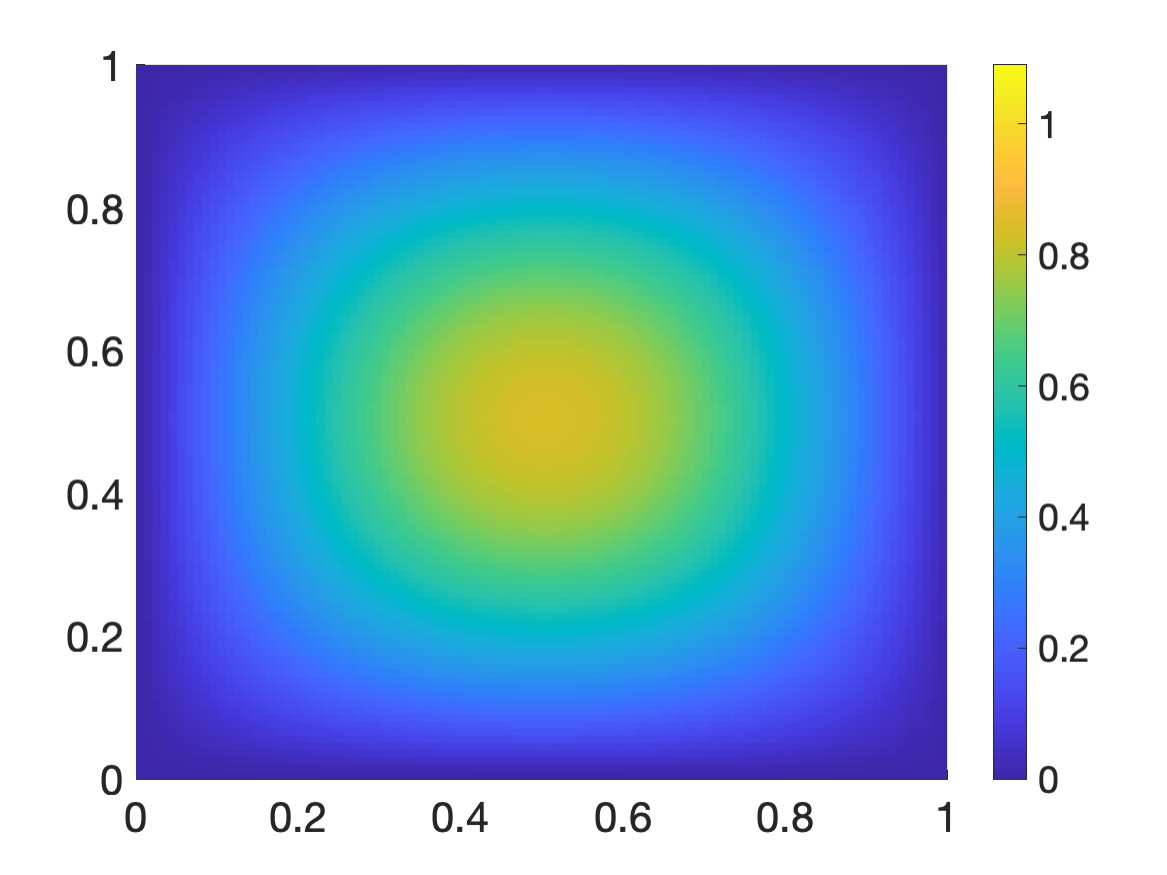}
         \caption{}
     \end{subfigure}
     \hfill
     \begin{subfigure}[b]{0.45\textwidth}
         \centering
         \includegraphics[width=\textwidth]{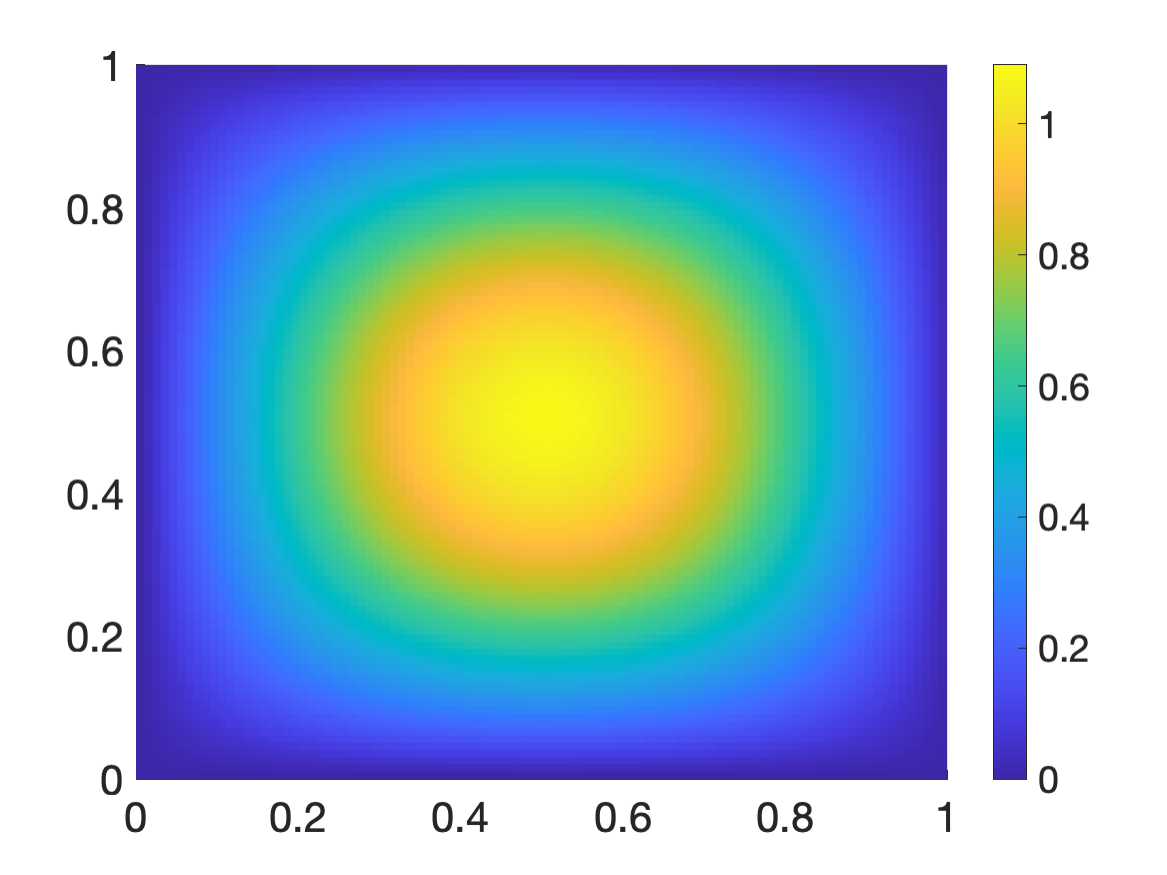}
         \caption{}
     \end{subfigure}
     \caption{(a) $u_{ex}$ at final time $T=1$; (b) $u_{ex,t}$ at final time $T=1$.}
        \label{fig:exact_sol_ex1}	
\end{figure}

First, we verify the convergence of the VEM-DG error as the time discretization refines.
We compute the VEM-DG solution $u_{h,\tau}$ applying the VEM of degree $k=4$ on the Veronoi mesh represented in Figure \ref{fig:fig2_ex1}(A), coupled with the DG method in time over uniform partitions of $[0,1]$ with decreasing length $\Delta t$ of the time-slabs and with varying polynomial degree $r=1,\, 2,\, 3$. Note that the case $r=1$ is not covered by the theory of Section~\ref{sec:DG_time}. In Figure \ref{fig:fig2_ex1}(B) we observe the expected decay of the error at final time, namely, $\vertiii{u_{ex}(T)-\ufd(T)}=O(\Delta t^{r-1/2})$ (see Corollary~\ref{cor:err_DG}).

\begin{figure}
    \begin{subfigure}[b]{0.45\textwidth}
        \includegraphics[width=\textwidth]{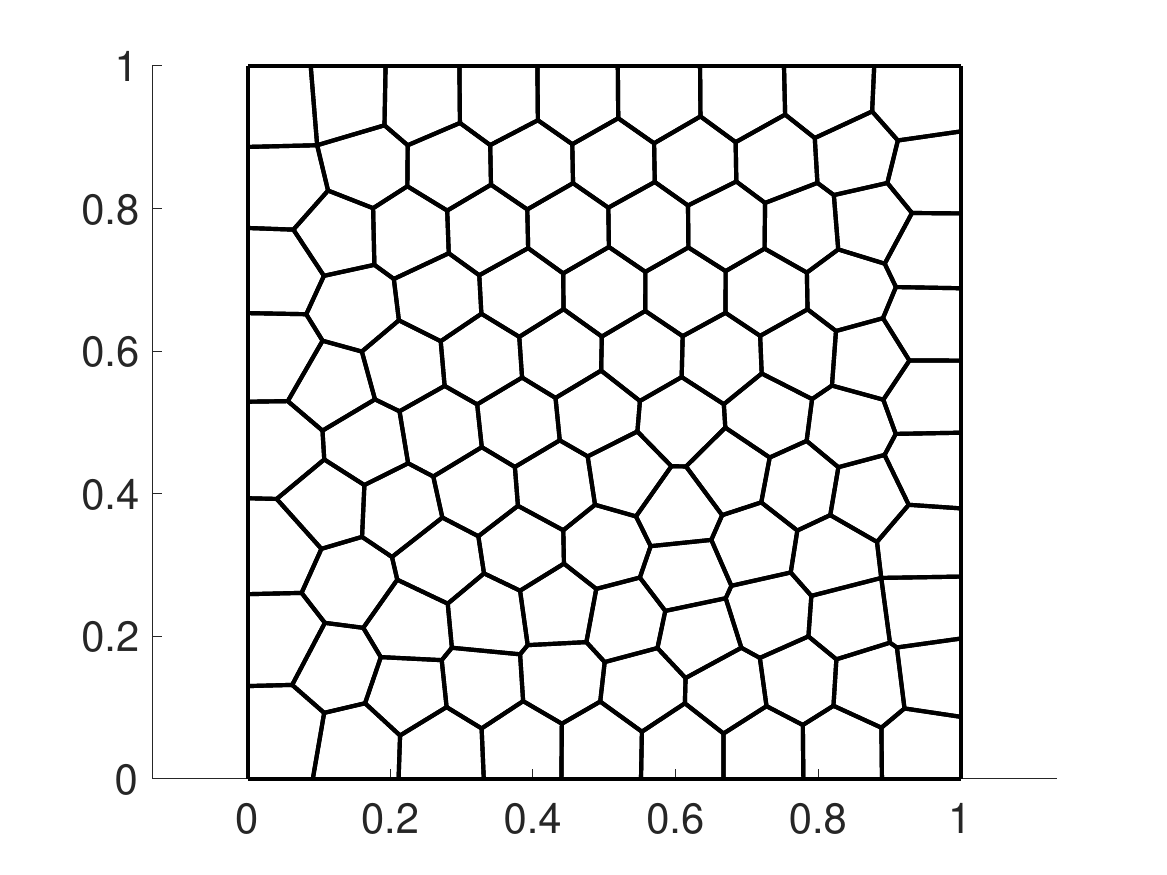}
        \caption{}         
    \end{subfigure}
    \hfill
    \begin{subfigure}[b]{0.45\textwidth}
        \includegraphics[width=\textwidth]{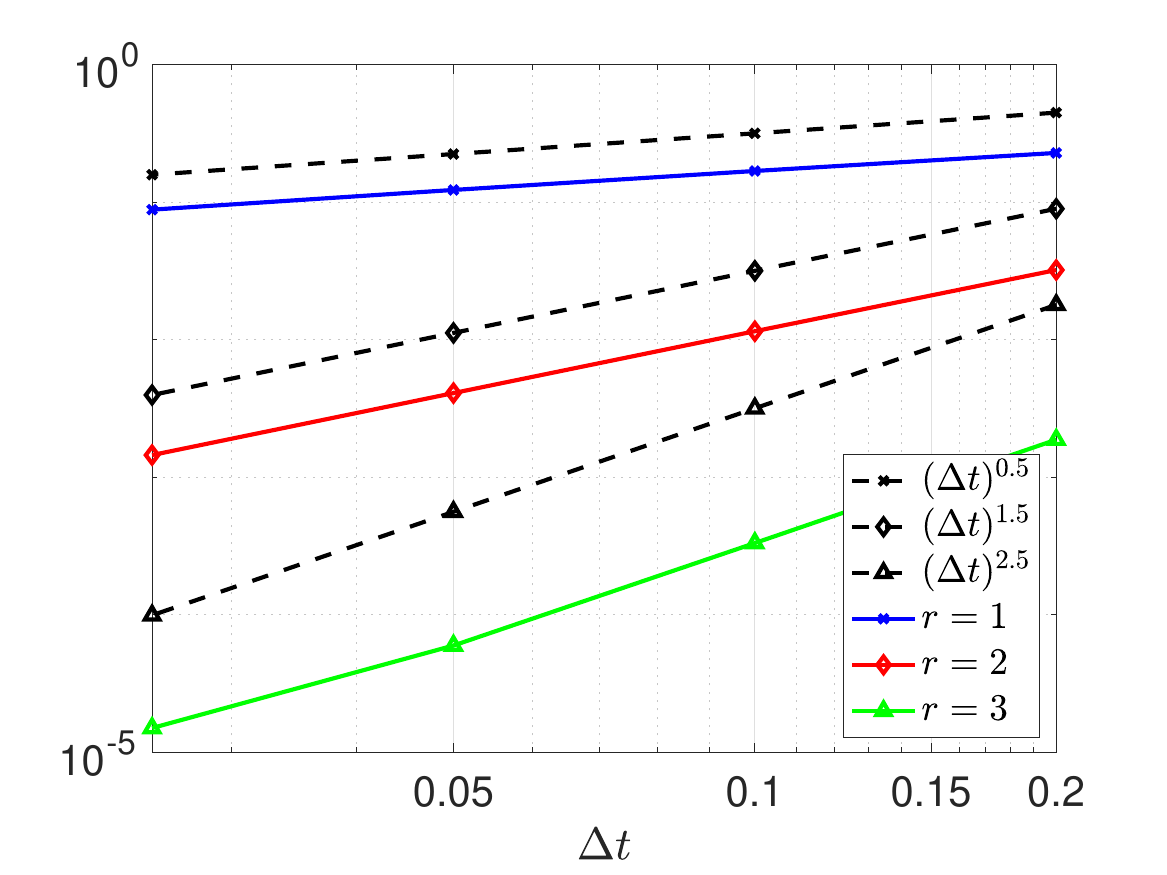}
        \caption{}         
     \end{subfigure}
    \caption{(a) Veronoi mesh with 100 polygonal elements; (b) $\vertiii{u_{ex}(T)-\ufd(T)}$, where $\ufd$ is computed using VEM of degree $k=4$ and DG of increasing degree $r=1,2,3$.}
    \label{fig:fig2_ex1}
\end{figure}

In the second experiment, we study the convergence of the VEM-DG error as the space discretization refines. To this end, we consider the DG approximation of degree $r=6$ over the uniform partition of $[0,1]$ with $\Delta t=0.1$, coupled with the VEM on different Veronoi meshes (see Figure \ref{fig:mesh}) and with increasing degree $k=1,\, 2,\, 3$. 
In Figure \ref{fig:error}(A) the expected behavior $\vertiii{u_{ex}(T)-\ufd(T)}=O(h^k)$ is observed (see Theorem~\ref{thm:VEM_estimate1}).

Finally, in the last experiment, we take $r=k$ and $h\sim\Delta t$. The error decay is depicted in Figure \ref{fig:error}(B), and it is in agreement with~\eqref{eq:error}.

\begin{figure}
	\begin{subfigure}[b]{0.45\textwidth}
         \includegraphics[width=\textwidth]{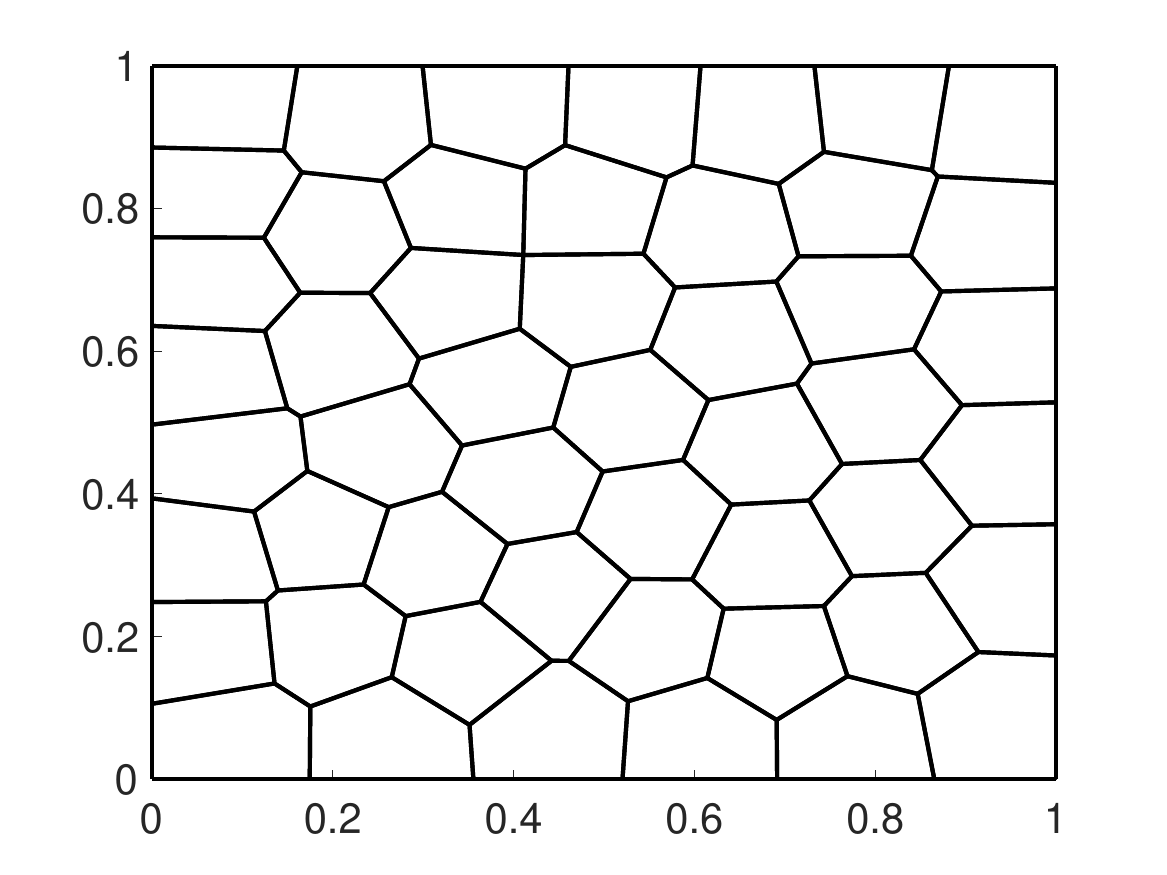}
         \caption{}
     \end{subfigure}
     \hfill
     \begin{subfigure}[b]{0.45\textwidth}
         \centering
         \includegraphics[width=\textwidth]{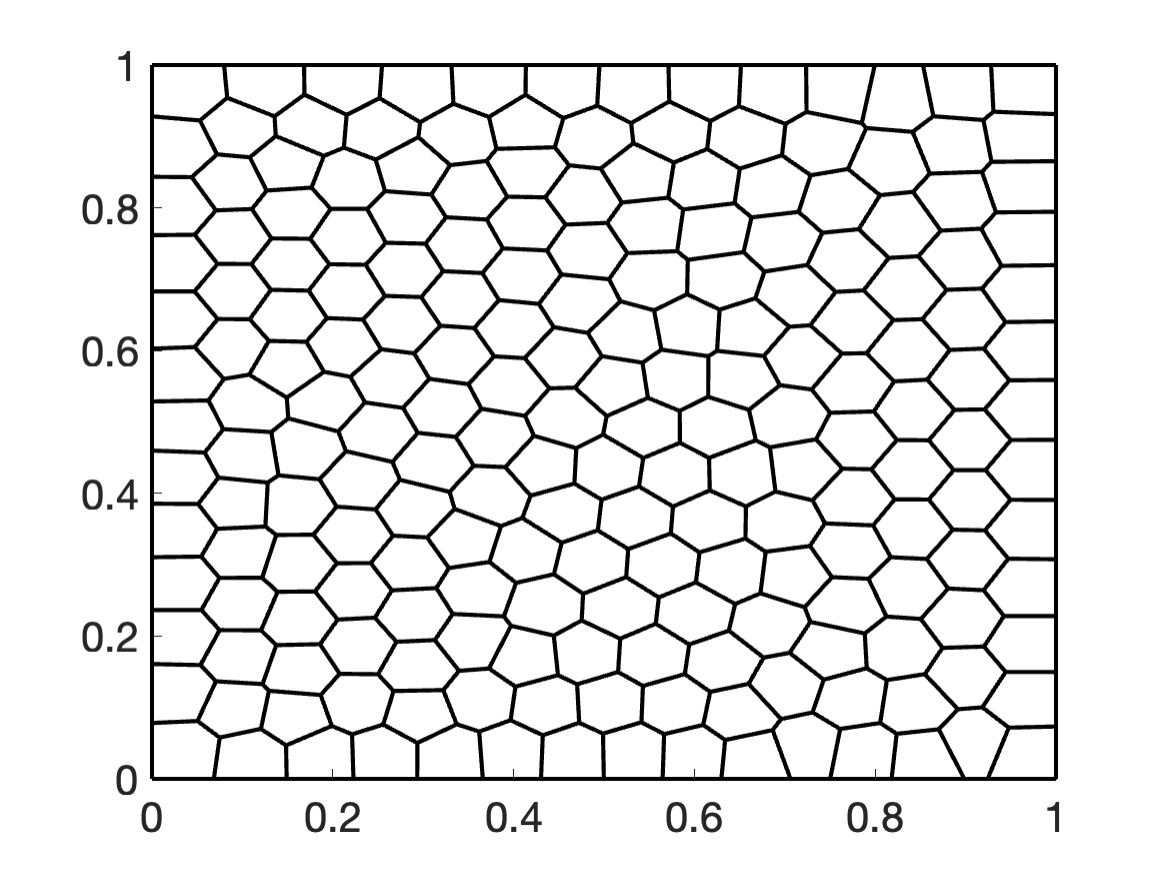}
         \caption{}
     \end{subfigure}
     \begin{subfigure}[b]{0.45\textwidth}
         \includegraphics[width=\textwidth]{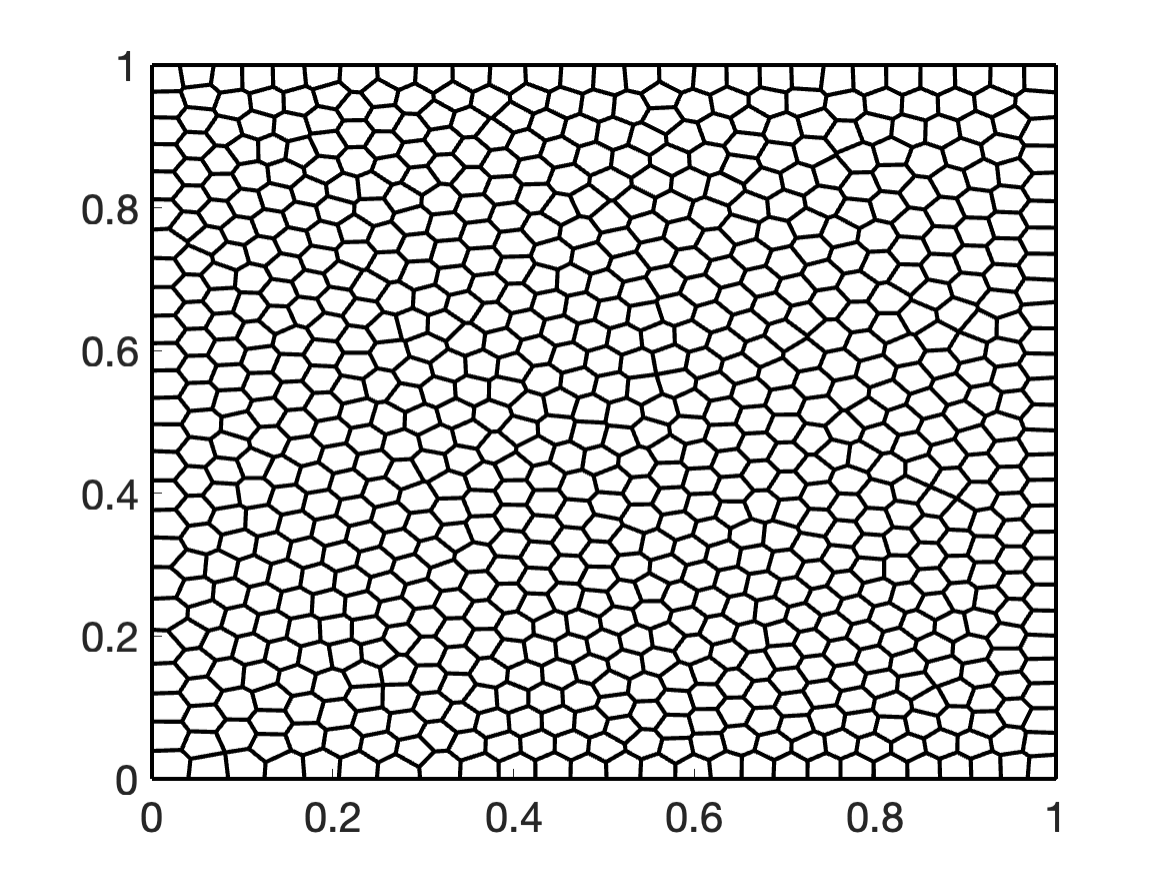}
         \caption{}
     \end{subfigure}
     \hfill
     \begin{subfigure}[b]{0.45\textwidth}
         \centering
         \includegraphics[width=\textwidth]{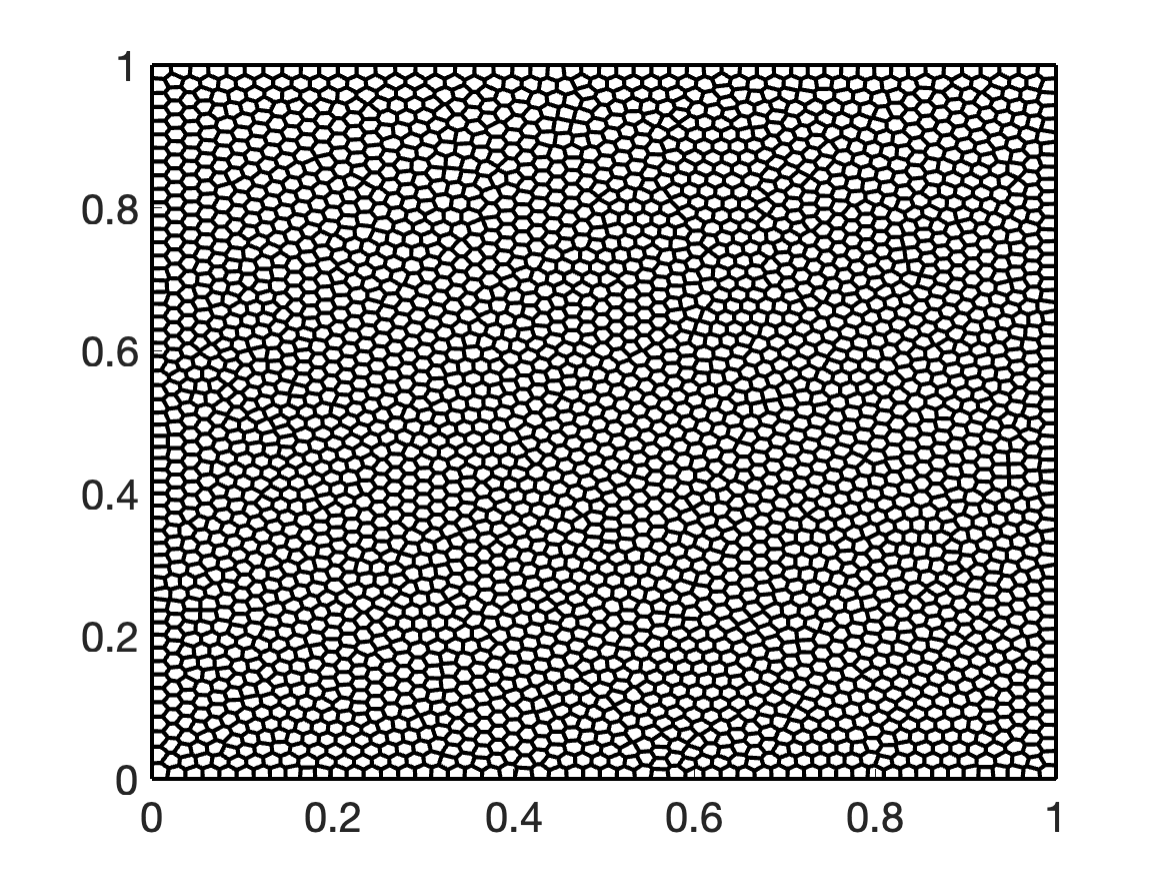}
         \caption{}
     \end{subfigure}
     \caption{Veronoi meshes with 50 (top left) 200 (top right) 800 (bottom left) and 3200 (bottom right) elements.}
        \label{fig:mesh}	
\end{figure}

\begin{figure}
	\begin{subfigure}[b]{0.45\textwidth}
        		\includegraphics[width=\textwidth]{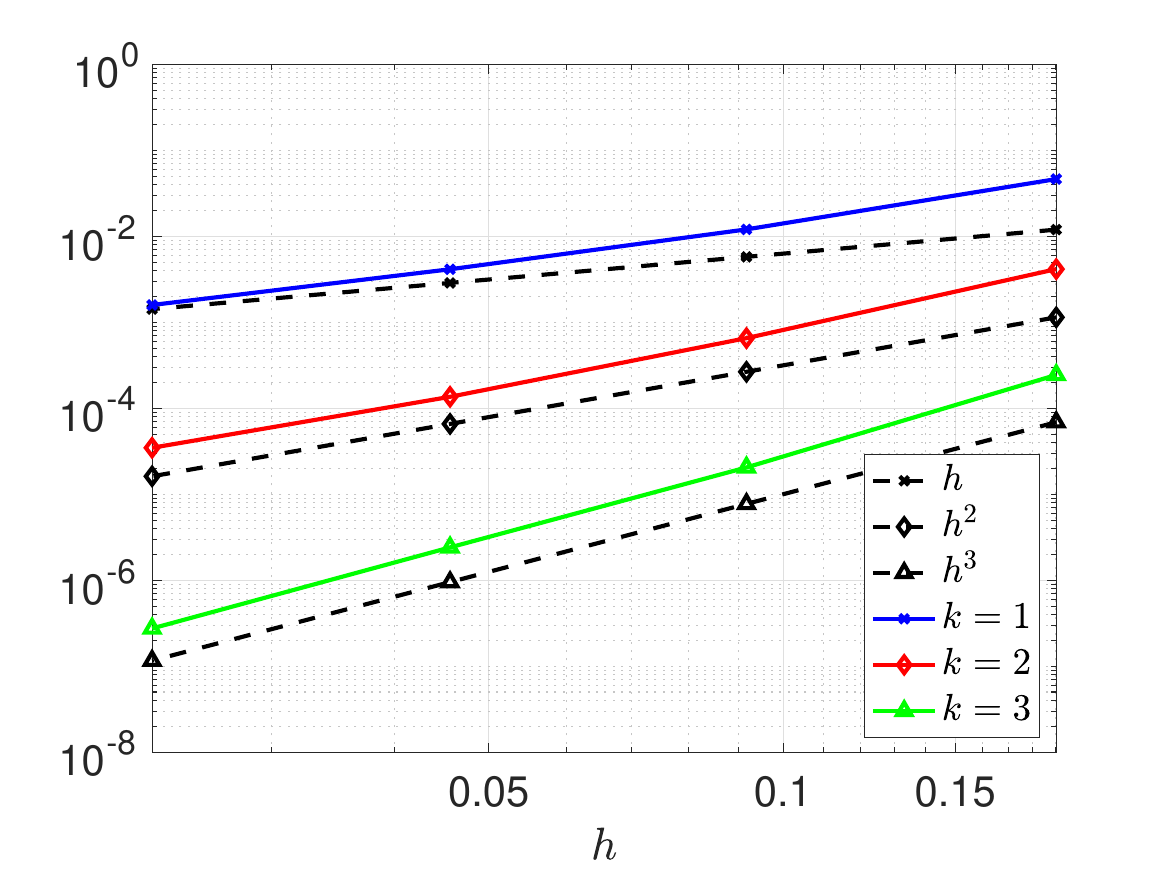}
         	\caption{}         
     	\end{subfigure}
     \hfill
     \begin{subfigure}[b]{0.45\textwidth}
        		\includegraphics[width=\textwidth]{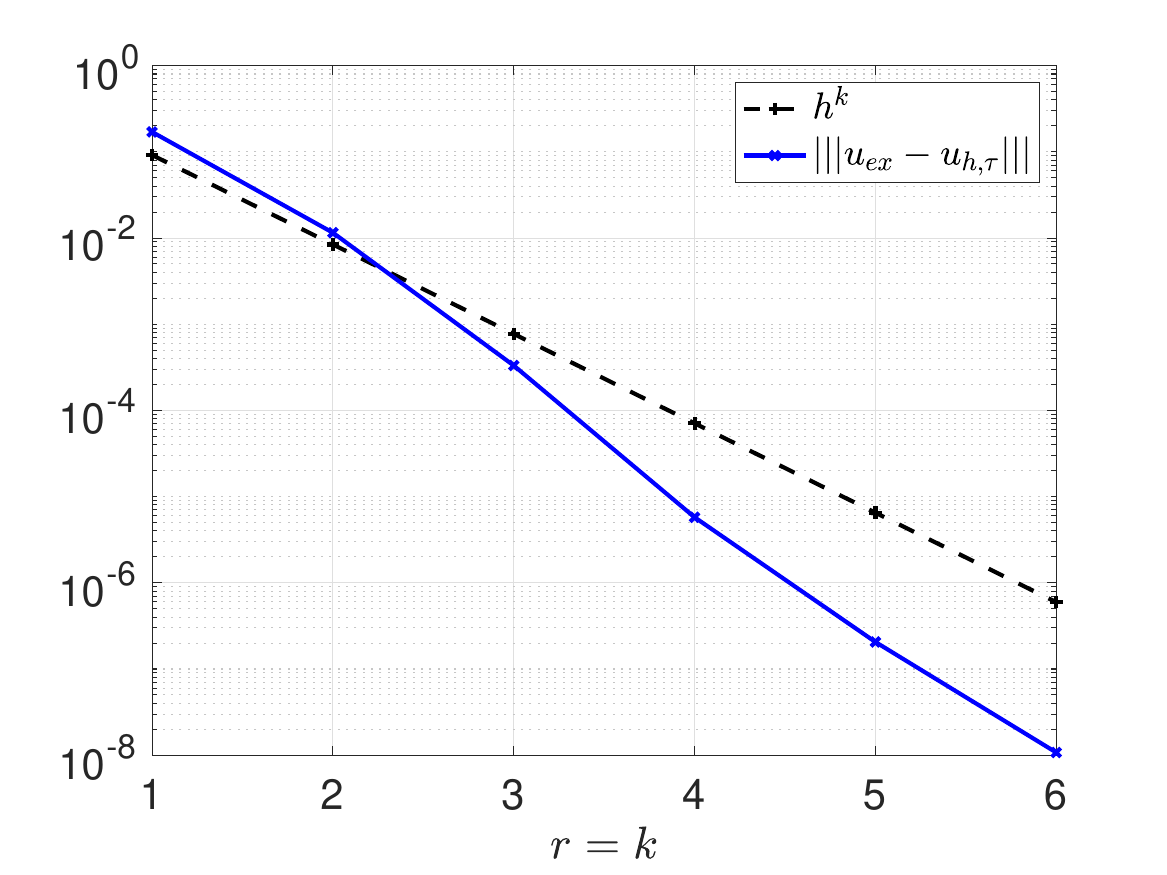}
         	\caption{}         
     	\end{subfigure}
	\caption{(a) $\vertiii{u_{ex}(T)-\ufd(T)}$, where $\ufd$ is computed using DG of degree $r=6$ on a uniform partition of the time interval with $\Delta t=0.01$ and VEM of increasing degree $k=1,\, 2,\, 3$.  (b) $\vertiii{u_{ex}(T)-\ufd(T)}$, where $\ufd$ is computed using DG on a uniform partition of the time interval with $\Delta t=0.1$ and VEM on the Veronoi mesh with 200 elements (see Figure~\ref{fig:mesh} (B), with equal degree in time and space.}
         \label{fig:error}
\end{figure}

\subsection{Validation test}

The second experiment deals with a more realistic scenario, and aims at investigating the performances of the proposed numerical scheme in the non-dissipative case, which is not covered by the theory here developed. In particular, we consider problem~\eqref{eq:pde} with $\nu=0$, initial data $u_0\equiv 0$, $z_0\equiv 0$ and loading term
\begin{equation}
	\label{eq:f}
	f(t,x)=\left\{\begin{array}{ll}
	0&\textrm{ for }t<0.1\\
	100\, e^{ -\frac{(x-x_0)^4}{s^2}} &\textrm{ else}
	\end{array}\right.
\end{equation}
representing a smooth impulse centered at $x_0=(0.05,0.05)$, with $s=0.025$ (see Figure~\ref{fig:f_uex} (A)). For such example, there is no analytical solution. Hence, we refer to an overkilled solution computed by means of the VEM of degree 2 on a spatial mesh with 3200 elements coupled with DG for time discretization, with polynomial degree 2 and $\Delta t=1/320$ (see Figure~\ref{fig:f_uex} (B)).

In Figure~\ref{fig:sol_test2} we represent the snapshots at final time $T=1$ of the approximated solution obtained by means of the proposed VEM-DG strategy (the parameters for time integration are $\Delta t=1/20$ and $r=2$), compared with the approximations produced using the Newmark method for increasing $\Delta t$. Note that the numerical scheme for the space integration is the same as in the reference solution. We can observe that the discrete solution computed with Newmark is affected by spurious oscillations.
In Figure~\ref{fig:sol_sensor} we report the computed time history of the displacement on a receiver located at $(0.5,0.5)$. It is clear that the DG-VEM approximation is more accurate than those computed with the Newmark method.

\begin{figure}
	\begin{subfigure}[b]{0.45\textwidth}
        		\includegraphics[width=\textwidth]{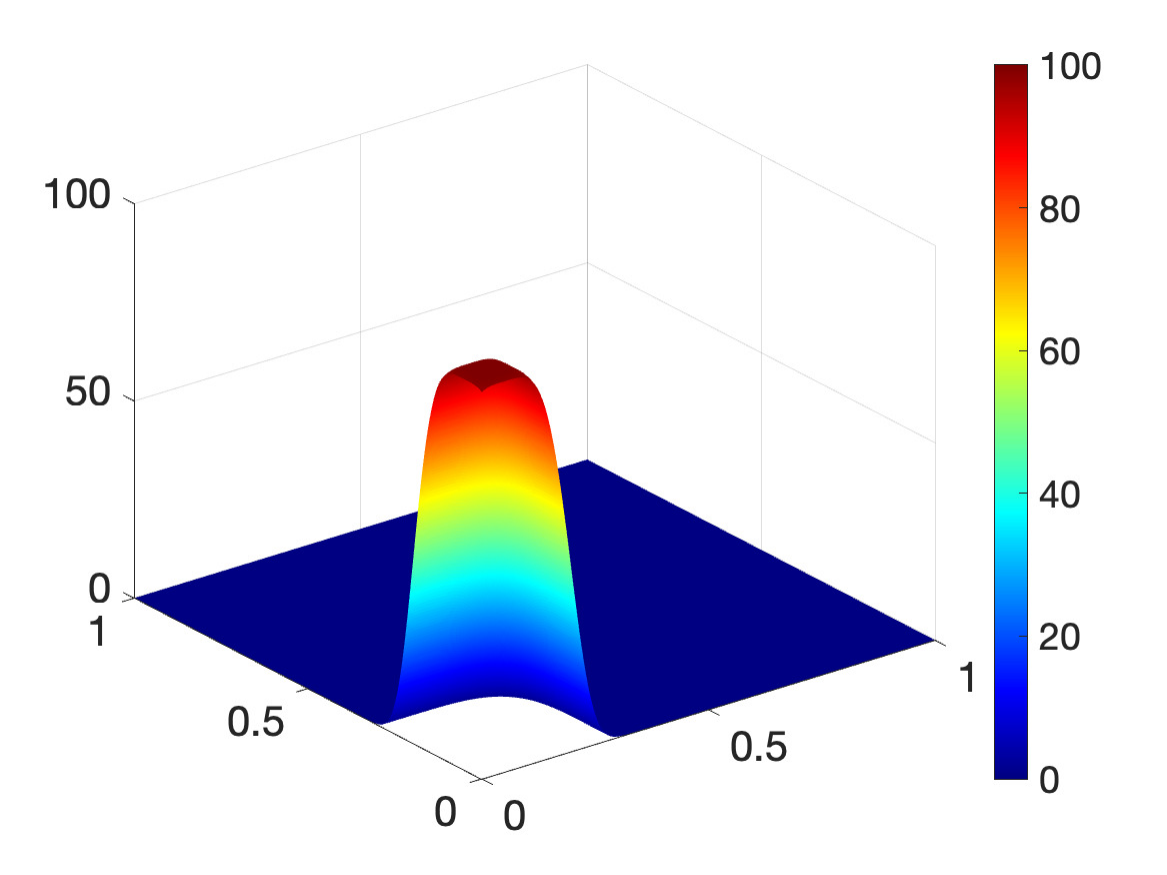}
         	\caption{}         
     	\end{subfigure}
     \hfill
     \begin{subfigure}[b]{0.45\textwidth}
        		\includegraphics[width=\textwidth]{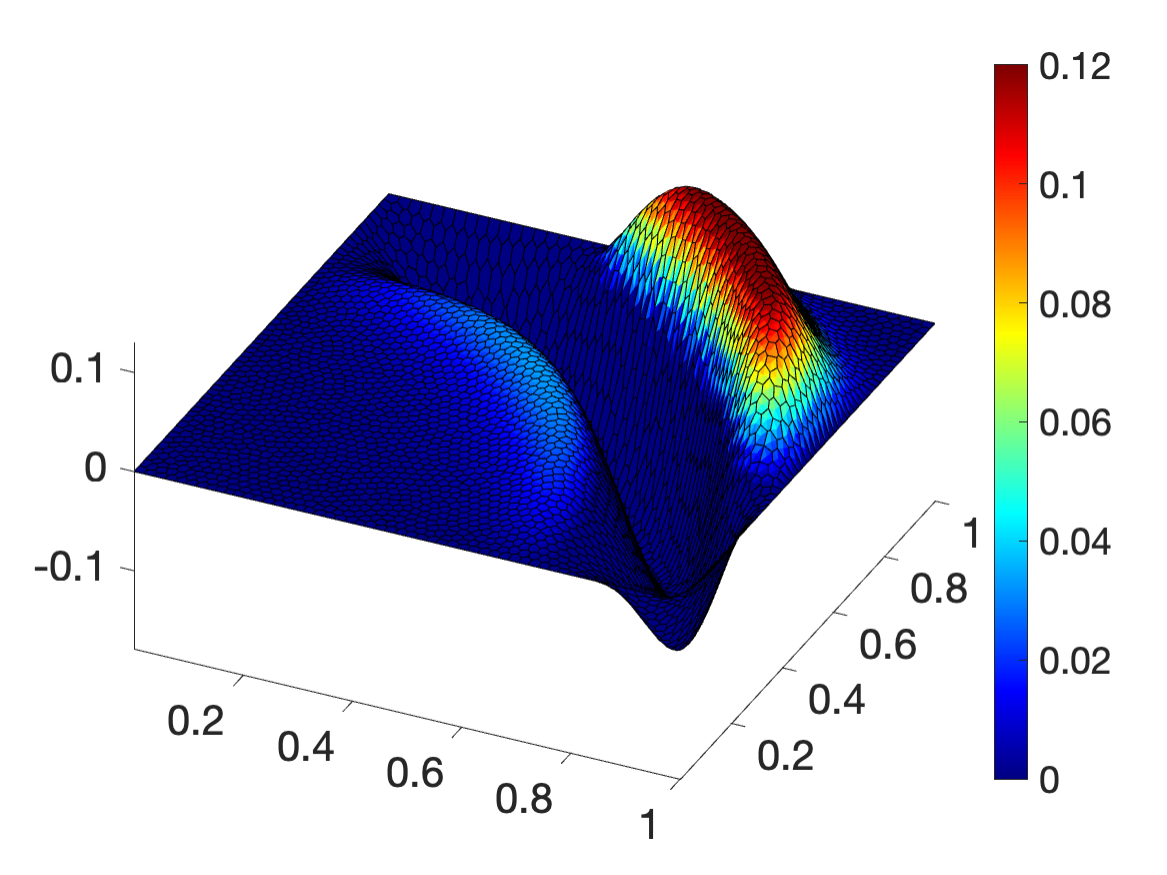}
         	\caption{}         
     	\end{subfigure}
	\caption{(a) Loading term~\eqref{eq:f}. (b) Reference solution.}
         \label{fig:f_uex}
\end{figure}

\begin{figure}
	\begin{subfigure}[b]{0.45\textwidth}
         \includegraphics[width=\textwidth]{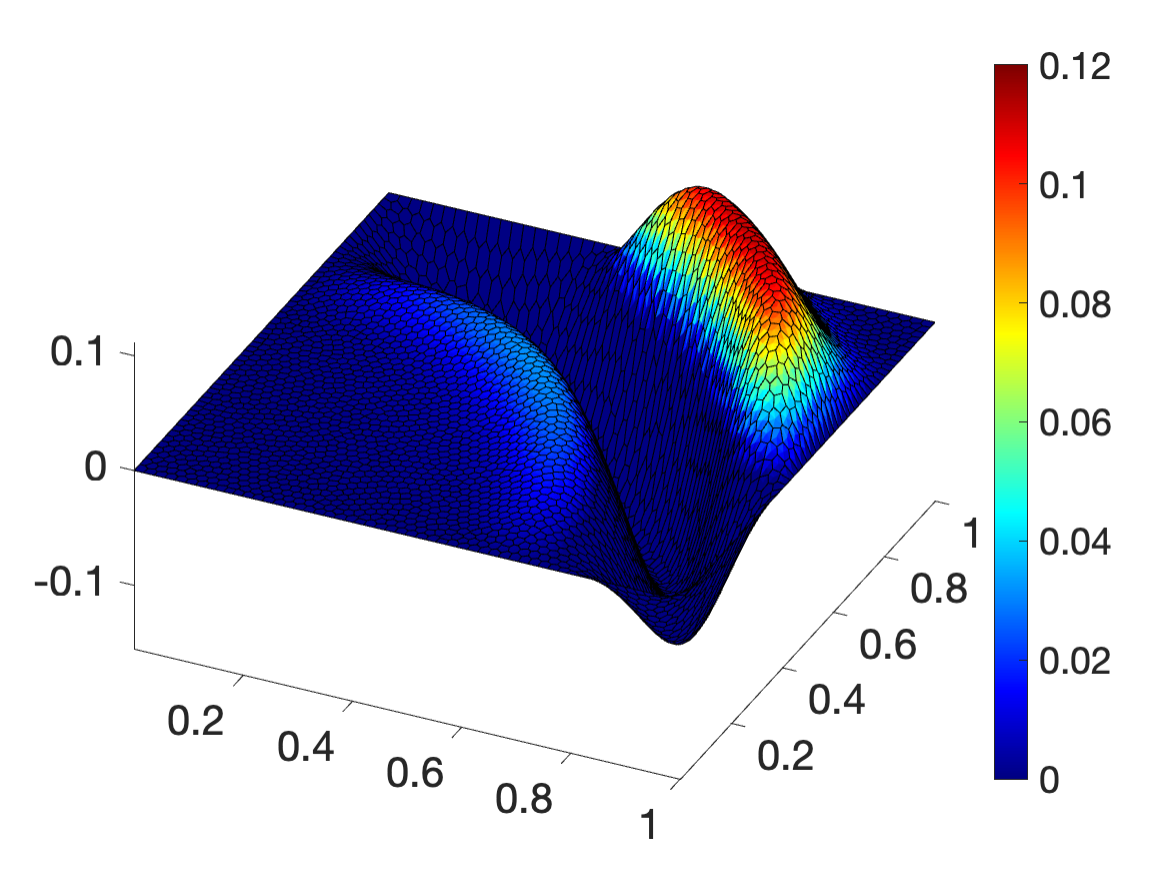}
         \caption{}
     \end{subfigure}
     \hfill
     \begin{subfigure}[b]{0.45\textwidth}
         \centering
         \includegraphics[width=\textwidth]{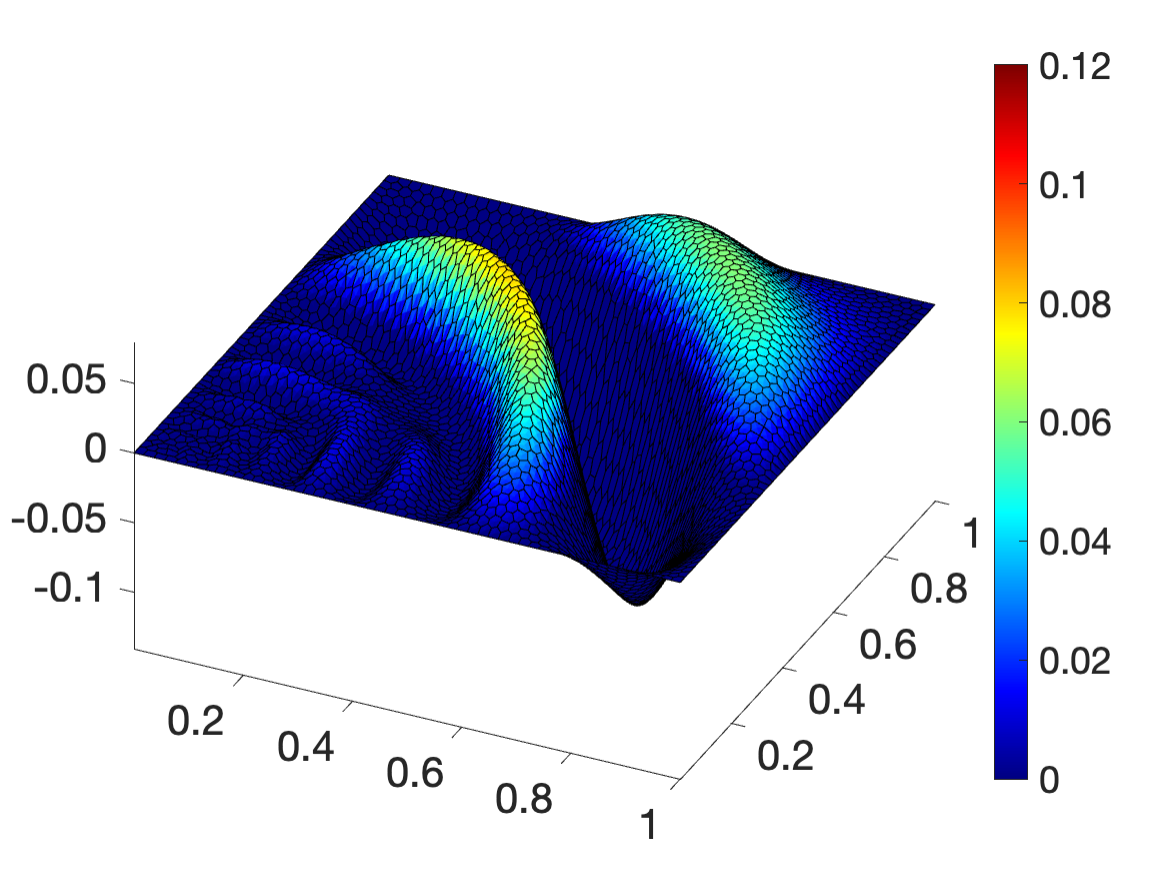}
         \caption{}
     \end{subfigure}
     \begin{subfigure}[b]{0.45\textwidth}
         \includegraphics[width=\textwidth]{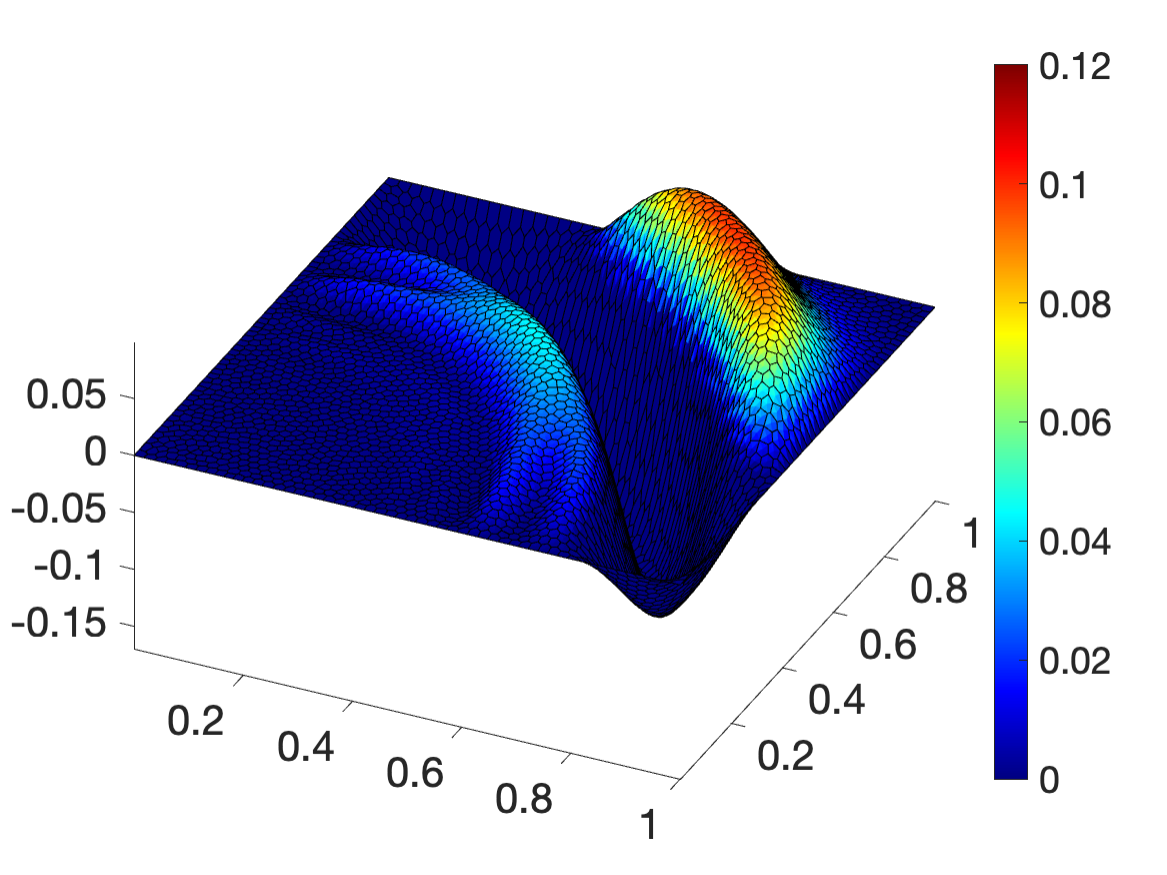}
         \caption{}
     \end{subfigure}
     \hfill
     \begin{subfigure}[b]{0.45\textwidth}
         \centering
         \includegraphics[width=\textwidth]{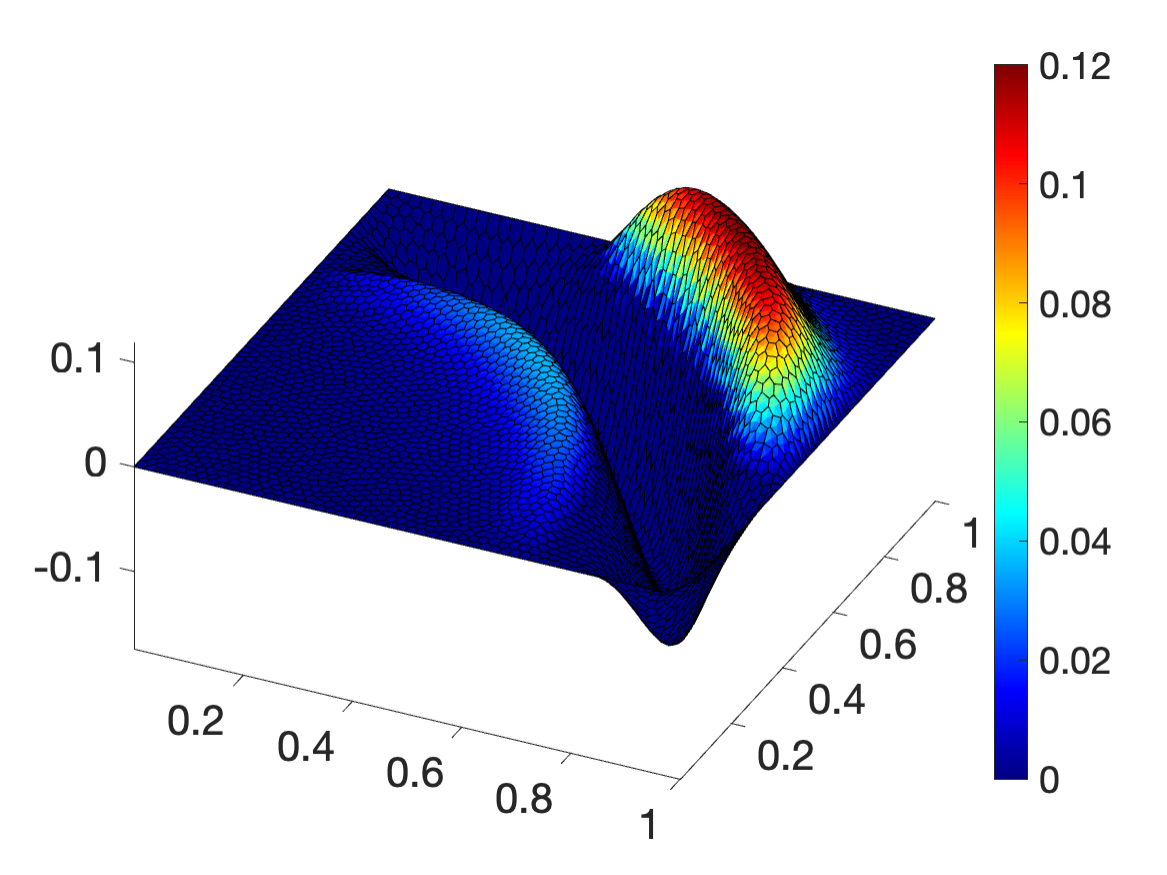}
         \caption{}
     \end{subfigure}
     \caption{Fully discrete solution computed by means of the proposed DG-VEM strategy with $\Delta t=1/20$ and $r=2$ (a) compared with the numerical approximation obtained by means of VEM in space coupled with Newmark for time integration, with $\Delta t=1/20$ (b), $\Delta t=1/40$ (c) and $\Delta t=1/80$ (d).}
        \label{fig:sol_test2}	
\end{figure}

\begin{figure}
    \begin{center}
	\includegraphics[width=0.45\textwidth]{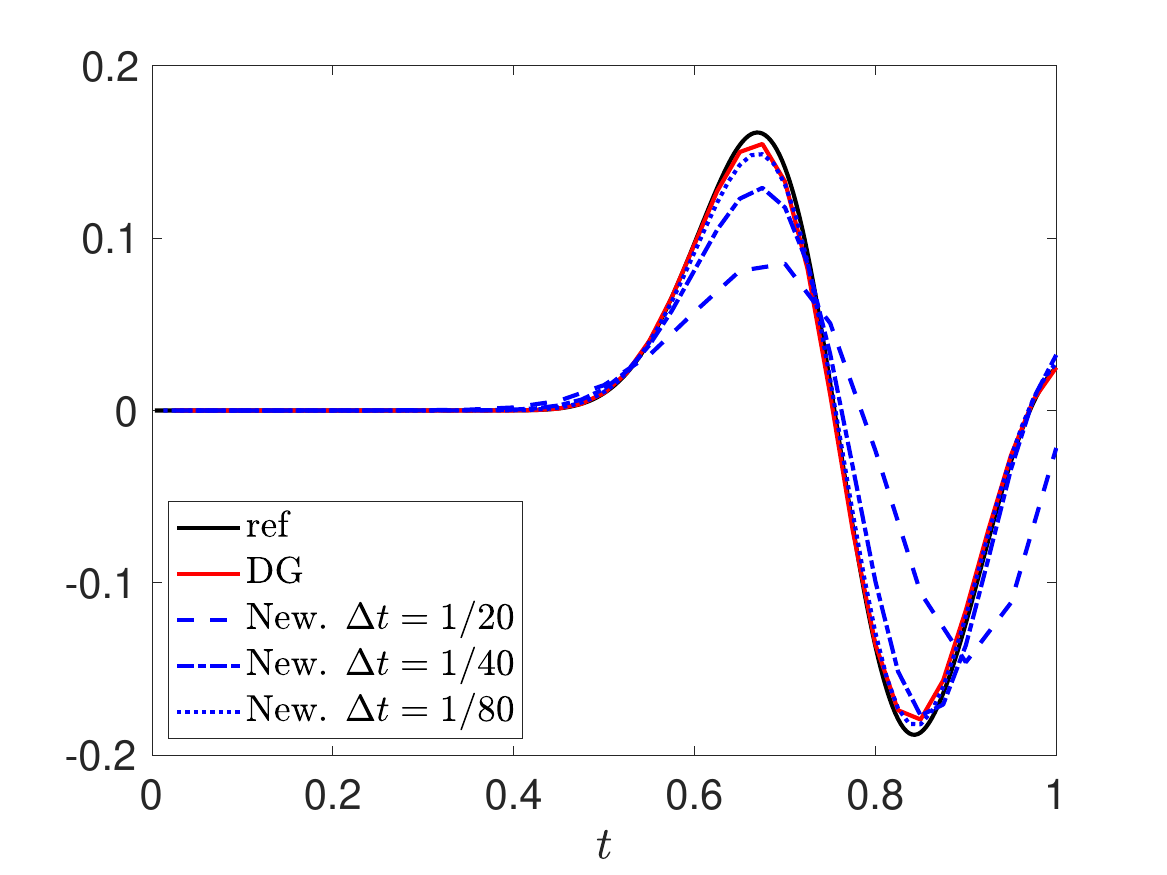}
        	\caption{Computed time history of the displacement on a receiver located at $(0.5,0.5)$. The black line represents the reference solution. The red line represents the DG-VEM solution. The dashed blue lines represent the solutions computed with the Newmark method for increasing $\Delta$.}         
        	\label{fig:sol_sensor}
    \end{center}
\end{figure}

\appendix

\section{Representation formula for the semi-discrete solution}
\label{sec:repr_form}

\begin{theorem}
\label{thm:VEM_wp}
The unique solution to problem~\eqref{eq:pde_weak_sd} is given by
\begin{gather}
	\label{eq:uh_expression}
	\begin{aligned}
	u_h(t)&\coloneqq \sum_{n=1}^{N_h} \gamma_n(t) w_h^{(n)},
	\end{aligned}
\end{gather}
where $\{w_h^{(n)}\}_{n=1}^{N_h}$ is the basis of $W_h$ orthonormal with respect to $m_h(\bullet,\bullet)$ fulfilling, for all $v_h\in W_h$ and for all $n=1,\ldots,N_h$
\begin{gather*}
	a_h(w_h^{(n)},v_h) = \lambda_h^{(n)} m_h(w_h^{(n)},v_h),
\end{gather*}
with $0<\lambda_h^{(1)}\leq \cdots\leq \lambda_h^{(N_h)}$, and the $n$-th coefficient in the eigen-expansion of $u_h(t)$~\eqref{eq:uh_expression} is given by 
\begin{gather}
	\label{eq:uh_coeff}
	\begin{aligned}
	\gamma_n(t)&\coloneqq 
	e^{-\nu/2 t}\Bigg[m_h(u_{h,0}, w_h^{(n)})\cos(\omega_h^{(n)} t)
	+\frac{1}{\omega_h^{(n)}} m_h(z_{h,0}, w_h^{(n)})\sin(\omega_h^{(n)} t)\\
	&\quad + \frac{1}{\omega_h^{(n)}} \int_0^t e^{-\nu/2 (t-s)} \sin(\omega_h^{(n)} (t-s)) \sp{f_h(s)}{w_h}\, ds\Bigg]
	\end{aligned}
\end{gather}
where $\omega_h^{(n)}\coloneqq\sqrt{\lambda_h^{(n)}-\frac{\nu^2}{4}}$, with $\nu$ being small enough so that $\lambda_h^{(n)}-\frac{\nu^2}{4}>0$ for all $n=1,\ldots, N_h$.
Moreover, for all $t\in (0,T)$, there holds
\end{theorem}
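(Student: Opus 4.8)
The plan is to diagonalize the semi-discrete system in the generalized eigenbasis of the pair $(a_h,m_h)$ and then solve the resulting decoupled scalar ODEs explicitly. First I would assemble the spectral ingredients. On the finite-dimensional space $W_h$ the form $m_h(\bullet,\bullet)$ is symmetric and positive definite (it induces $\normL{\bullet}^2$), while $a_h(\bullet,\bullet)$ is symmetric and, by \eqref{eq:stability} together with the Poincar\'e inequality on $W_h\subset H^1_0(\Omega)$, positive definite as well (it induces $\normH{\bullet}^2$). Two symmetric forms, one of them positive definite, can be simultaneously diagonalized; equivalently, in the algebraic setting of Section~\ref{sec:VEM_alg}, the generalized eigenproblem $A_h w=\lambda M_h w$ with $M_h$ symmetric positive definite admits $N_h$ real positive eigenvalues $0<\lambda_h^{(1)}\le\cdots\le\lambda_h^{(N_h)}$ with $M_h$-orthonormal eigenvectors. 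This produces exactly the basis $\{w_h^{(n)}\}_{n=1}^{N_h}$ of the statement.

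Next I would expand $u_h(t)=\sum_{n=1}^{N_h}\gamma_n(t)w_h^{(n)}$, which is legitimate since $\{w_h^{(n)}\}$ is a basis of $W_h$; by $m_h$-orthonormality one has $\gamma_n(t)=m_h(u_h(t),w_h^{(n)})$. Testing \eqref{eq:pde_weak_sd} with $v_h=w_h^{(n)}$ and using orthonormality together with the eigenrelation $a_h(w_h^{(m)},w_h^{(n)})=\lambda_h^{(n)}\delta_{mn}$ decouples the problem into the $N_h$ independent scalar ODEs
\begin{gather*}
	\ddot\gamma_n(t)+\nu\dot\gamma_n(t)+\lambda_h^{(n)}\gamma_n(t)=\sp{f_h(t)}{w_h^{(n)}},
\end{gather*}
with initial data $\gamma_n(0)=m_h(u_{h,0},w_h^{(n)})$ and $\dot\gamma_n(0)=m_h(z_{h,0},w_h^{(n)})$, read off from the initial conditions of the semi-discrete problem.

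The third step is to solve each of these forced, damped linear oscillators. Under the smallness assumption on $\nu$ the motion is under-damped: the substitution $\gamma_n(t)=e^{-\nu t/2}\beta_n(t)$ removes the first-order term and leaves $\ddot\beta_n+(\omega_h^{(n)})^2\beta_n=e^{\nu t/2}\sp{f_h}{w_h^{(n)}}$ with $\omega_h^{(n)}=\sqrt{\lambda_h^{(n)}-\nu^2/4}$. Solving this via the homogeneous $\cos/\sin$ pair plus Duhamel's (variation-of-constants) formula and undoing the substitution (the factors $e^{-\nu t/2}e^{\nu s/2}=e^{-\nu(t-s)/2}$ combine inside the convolution) yields \eqref{eq:uh_coeff}. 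Uniqueness is then automatic: the decoupled linear ODEs with prescribed initial data have unique solutions, so \eqref{eq:pde_weak_sd} is uniquely solvable, which also underpins the well-posedness claim invoked earlier.

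I expect the main bookkeeping difficulty to be the constant-matching in this last step --- in particular tracking how $\dot\gamma_n(0)$ propagates through the change of variables into the coefficient of $\sin(\omega_h^{(n)}t)$ --- rather than any conceptual obstacle, since all the structural facts are elementary linear algebra and ODE theory. For the closing estimate announced after ``there holds'', the plan is to bound $|\gamma_n(t)|$ and $|\dot\gamma_n(t)|$ directly from \eqref{eq:uh_coeff} (using $e^{-\nu t/2}|\cos|,\,e^{-\nu t/2}|\sin|\le 1$ and a Cauchy--Schwarz/Young estimate on the Duhamel integral) and then to sum over $n$, invoking the Parseval identities $\normL{u_h(t)}^2=\sum_n\gamma_n(t)^2$ and $\normH{u_h(t)}^2=\sum_n\lambda_h^{(n)}\gamma_n(t)^2$, which follow from $m_h$-orthonormality and the eigenrelation, together with \eqref{eq:stability} to convert back to the $L^2$- and $H^1$-based quantities.
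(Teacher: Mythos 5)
Your proposal follows essentially the same route as the paper's own proof: expand in the $m_h$-orthonormal generalized eigenbasis, observe that testing \eqref{eq:pde_weak_sd} with $w_h^{(n)}$ decouples it into the scalar damped oscillators $\ddot\gamma_n+\nu\dot\gamma_n+\lambda_h^{(n)}\gamma_n=\sp{f_h(t)}{w_h^{(n)}}$, and exhibit the explicit under-damped solution; uniqueness follows, as you say, from scalar ODE theory. The only methodological difference is that the paper \emph{verifies} the formula a posteriori (Lemmas~\ref{lem:sol_hom} and~\ref{lem:sol_part} just differentiate the homogeneous part $F_n$ and the Duhamel convolution $G_n$ and check the ODE), whereas you \emph{derive} it via the substitution $\gamma_n=e^{-\nu t/2}\beta_n$ and variation of constants; the two are interchangeable, and your route has the extra merit of actually justifying the existence of the eigenbasis, which the paper takes for granted.

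One caveat, and it is exactly the constant-matching point you flagged: carried out to the end, your derivation does \emph{not} literally produce \eqref{eq:uh_coeff}. Since $\dot\beta_n(0)=\dot\gamma_n(0)+\frac{\nu}{2}\gamma_n(0)$, the coefficient multiplying $\frac{1}{\omega_h^{(n)}}\sin(\omega_h^{(n)}t)$ must be $m_h(z_{h,0},w_h^{(n)})+\frac{\nu}{2}m_h(u_{h,0},w_h^{(n)})$, not $m_h(z_{h,0},w_h^{(n)})$; with the coefficient as printed, the expression still solves the ODE (the homogeneous part does so for arbitrary constants), but its initial velocity is $m_h(z_{h,0},w_h^{(n)})-\frac{\nu}{2}m_h(u_{h,0},w_h^{(n)})$, so the initial condition $u_{h,t}(0)=z_{h,0}$ is missed. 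In addition, in \eqref{eq:uh_coeff} the outer factor $e^{-\nu t/2}$ erroneously multiplies the Duhamel integral, which already carries $e^{-\nu(t-s)/2}$ inside; compare with $G_n$ in Lemma~\ref{lem:sol_part}, which has no such prefactor. These are defects of the printed formula (and of the paper's own final matching of $c_n$, $d_n$, which is inconsistent with its computation of $\dot F_n(0)$ in Lemma~\ref{lem:sol_hom}), not of your argument --- indeed your derivation is precisely what exposes them --- but a complete write-up of your step three should end with the corrected coefficients rather than asserting agreement with \eqref{eq:uh_coeff} as stated.
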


To prove Theorem~\ref{thm:VEM_wp} we need two auxiliary results.
\begin{lemma}
\label{lem:sol_hom}
Let 
\begin{gather}
	\label{eq:Fn}
	F_n(t)\coloneqq e^{-\nu/2 t}\Big(c_n \cos(\omega_h^{(n)} t)+\frac{d_n}{\omega_h^{(n)}} \sin(\omega_h^{(n)} t)\Big), 
\end{gather}	
with $c_n,\, d_n>0$. Then, there holds
\begin{gather}
	\label{eq:sol_hom}
	\ddot F_n(t) + \nu \dot F_n(t) + \lambda_h^{(n)} F_n(t) =0,
\end{gather}
where $\lambda_h^{(n)}, \omega_h^{(n)}$ have been defined in Theorem~\ref{thm:VEM_stability}.
\end{lemma}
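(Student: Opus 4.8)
The plan is to recognize $F_n$ as the general real solution of a linear, second-order, constant-coefficient ODE and to confirm \eqref{eq:sol_hom} either by a characteristic-polynomial argument or by direct differentiation. Equation \eqref{eq:sol_hom} reads $\ddot F + \nu \dot F + \lambda_h^{(n)} F = 0$, whose characteristic polynomial is $r^2 + \nu r + \lambda_h^{(n)} = 0$. Its roots are $r_\pm = -\tfrac{\nu}{2} \pm \tfrac{1}{2}\sqrt{\nu^2 - 4\lambda_h^{(n)}} = -\tfrac{\nu}{2} \pm i\,\omega_h^{(n)}$, where the identification of the imaginary part uses exactly the definition $\omega_h^{(n)}=\sqrt{\lambda_h^{(n)}-\nu^2/4}$ together with the standing assumption $\lambda_h^{(n)}-\nu^2/4>0$, which makes the roots genuinely complex. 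The real and imaginary parts of $e^{r_+ t}$ are $e^{-\nu t/2}\cos(\omega_h^{(n)} t)$ and $e^{-\nu t/2}\sin(\omega_h^{(n)} t)$; since $F_n$ is a real linear combination of these two fundamental solutions, with coefficients $c_n$ and $d_n/\omega_h^{(n)}$, it satisfies the ODE automatically.

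If a self-contained verification avoiding any appeal to ODE theory is preferred, I would differentiate directly. Writing $F_n = e^{-\nu t/2} G_n$ with $G_n(t) = c_n \cos(\omega_h^{(n)} t) + \tfrac{d_n}{\omega_h^{(n)}}\sin(\omega_h^{(n)} t)$, the product rule yields $\dot F_n = e^{-\nu t/2}\big(\dot G_n - \tfrac{\nu}{2} G_n\big)$ and $\ddot F_n = e^{-\nu t/2}\big(\ddot G_n - \nu \dot G_n + \tfrac{\nu^2}{4} G_n\big)$. Substituting into the left-hand side of \eqref{eq:sol_hom} and factoring out $e^{-\nu t/2}$, the terms proportional to $\dot G_n$ cancel and one is left with $\ddot G_n + \big(\lambda_h^{(n)} - \tfrac{\nu^2}{4}\big) G_n$. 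Because $G_n$ is a combination of $\cos(\omega_h^{(n)} t)$ and $\sin(\omega_h^{(n)} t)$, we have $\ddot G_n = -(\omega_h^{(n)})^2 G_n = -\big(\lambda_h^{(n)} - \tfrac{\nu^2}{4}\big) G_n$, so the two remaining terms cancel and the expression vanishes.

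There is no genuine obstacle here: the statement is an elementary verification, and the only point requiring care is the algebraic bookkeeping in the substitution, together with the observation that the damping prefactor $e^{-\nu t/2}$ shifts the characteristic roots precisely so as to absorb the $\nu\dot F$ term. The hypothesis $\lambda_h^{(n)}-\nu^2/4>0$ serves only to guarantee that $\omega_h^{(n)}$ is real, so that the trigonometric (rather than hyperbolic) representation in \eqref{eq:Fn} is the appropriate one for the underdamped regime; the computation itself would go through formally for either sign.
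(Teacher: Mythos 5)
Your proof is correct. The paper's own argument is the brute-force version of your second, ``self-contained'' verification: it differentiates $F_n$ twice, displays the full expansions of $\dot F_n$ and $\ddot F_n$ as combinations of $e^{-\nu t/2}\cos(\omega_h^{(n)}t)$ and $e^{-\nu t/2}\sin(\omega_h^{(n)}t)$, and leaves the final cancellation in $\ddot F_n+\nu\dot F_n+\lambda_h^{(n)}F_n$ implicit (incidentally, the paper's displayed first derivative contains a typo: the coefficient of $\sin(\omega_h^{(n)}t)$ should be $-\tfrac{\nu}{2}\tfrac{d_n}{\omega_h^{(n)}}-c_n\omega_h^{(n)}$, with $d_n$ rather than $c_n$ in the first term). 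Your factorization $F_n=e^{-\nu t/2}G_n$ accomplishes the same computation with far less bookkeeping: the damping term is absorbed exactly, and the identity reduces to $\ddot G_n=-(\omega_h^{(n)})^2G_n$ combined with $(\omega_h^{(n)})^2=\lambda_h^{(n)}-\nu^2/4$. Your primary argument via the characteristic polynomial $r^2+\nu r+\lambda_h^{(n)}$ and the complex roots $-\nu/2\pm i\,\omega_h^{(n)}$ is a genuinely different, more conceptual route not taken by the paper; it makes transparent that \emph{any} real linear combination of the two fundamental solutions satisfies the ODE, so the hypothesis $c_n,d_n>0$ is seen to be irrelevant, at the mild cost of invoking standard ODE theory instead of a bare computation. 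One small remark: the constants $\lambda_h^{(n)}$, $\omega_h^{(n)}$ are actually introduced in the representation theorem of the appendix (Theorem~\ref{thm:VEM_wp}), not in Theorem~\ref{thm:VEM_stability} as the lemma's cross-reference claims; the definition $\omega_h^{(n)}=\sqrt{\lambda_h^{(n)}-\nu^2/4}$ you reconstructed is exactly the intended one.
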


\begin{proof}
Equation~\eqref{eq:sol_hom} follows by observing that
\begin{gather*}
	\begin{aligned}
		\dot F(t)&=
		e^{-\nu/2 t}\Bigg[ \Big(-\frac{\nu}{2} c_n+d_n\Big) \cos(\omega_h^{(n)} t)
		+  \Big(-\frac{\nu}{2} \frac{c_n}{\omega_h^{(n)}}-c_n\omega_h^{(n)}\Big) \sin(\omega_h^{(n)} t)\Bigg],\\
		\ddot F(t) &= e^{-\nu/2 t}\Bigg[
		\Big(\frac{\nu^2}{4}c_n-\nu d_n-c_n( \omega_h^{(n)})^2\Big) \cos(\omega_h^{(n)} t)\\
		&\quad
		+\Big(\frac{\nu^2}{4}\frac{d_n}{\omega_h^{(n)}}+\nu c_n\omega_h^{(n)}-\omega_h^{(n)}d_n \Big) \sin(\omega_h^{(n)} t)
		\Bigg]. 
	\end{aligned}
\end{gather*}
\end{proof}

\begin{lemma}
\label{lem:sol_part}
Let 
\begin{gather}
	\label{eq:Gn}
	G_n(t)\coloneqq\int_{0}^t g_n(t,s)\, ds, 
\end{gather}
with 
\begin{gather}
	\label{eq:gn}
g_n(t,s)\coloneqq \frac{1}{\omega_h^{(n)}} e^{-\nu/2 (t-s)} \sin(\omega_h^{(n)} (t-s)) \sp{f_h(s)}{w_h}.
\end{gather}
Then, there holds
\begin{gather}
	\label{eq:sol_part}
	\ddot G_n(t) + \nu \dot G_n(t) + \lambda_h^{(n)} G_n(t) =\sp{f_h(t)}{w_h},
\end{gather}
where $\lambda_h^{(n)}, \omega_h^{(n)}$ have been defined in Theorem~\ref{thm:VEM_stability}.
\end{lemma}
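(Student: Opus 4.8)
The plan is to treat $G_n$ as a Duhamel (variation-of-parameters) convolution and to exploit the fact that, for each fixed $s$, the kernel $t\mapsto g_n(t,s)$ is precisely a homogeneous solution in the sense of Lemma~\ref{lem:sol_hom}. Indeed, writing $\sigma=t-s$, one sees that $g_n(t,s)=\tfrac{1}{\omega_h^{(n)}}\sp{f_h(s)}{w_h}\,e^{-\nu/2\,\sigma}\sin(\omega_h^{(n)}\sigma)$ matches $F_n(\sigma)$ from~\eqref{eq:Fn} with $c_n=0$ and $d_n=\sp{f_h(s)}{w_h}$. Since the underlying ODE has constant coefficients, hence is translation invariant in time, Lemma~\ref{lem:sol_hom} yields, for every fixed $s\leq t$,
\[
\partial_t^2 g_n(t,s)+\nu\,\partial_t g_n(t,s)+\lambda_h^{(n)}g_n(t,s)=0.
\]

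The computation then proceeds by differentiating under the integral sign (Leibniz rule for a variable upper limit). First I would differentiate once to obtain
\[
\dot G_n(t)=g_n(t,t)+\int_0^t\partial_t g_n(t,s)\,ds=\int_0^t\partial_t g_n(t,s)\,ds,
\]
the boundary term vanishing because $g_n(t,t)$ is proportional to $\sin(0)=0$. Differentiating a second time now picks up a nontrivial boundary contribution,
\[
\ddot G_n(t)=\partial_t g_n(t,s)\big|_{s=t}+\int_0^t\partial_t^2 g_n(t,s)\,ds,
\]
and a direct evaluation gives $\partial_t g_n(t,s)\big|_{s=t}=\sp{f_h(t)}{w_h}$, since at $s=t$ the exponential and the cosine equal $1$ while the sine vanishes, leaving the surviving term $\tfrac{1}{\omega_h^{(n)}}\cdot\omega_h^{(n)}\sp{f_h(t)}{w_h}$. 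This boundary term is exactly the desired right-hand side of~\eqref{eq:sol_part}.

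Finally, I would substitute the homogeneous identity of the first paragraph into the remaining integral, replacing $\partial_t^2 g_n$ by $-\nu\,\partial_t g_n-\lambda_h^{(n)}g_n$, and recognize the two resulting integrals as $\dot G_n(t)=\int_0^t\partial_t g_n\,ds$ and $G_n(t)=\int_0^t g_n\,ds$. Collecting terms gives $\ddot G_n(t)=\sp{f_h(t)}{w_h}-\nu\dot G_n(t)-\lambda_h^{(n)}G_n(t)$, which rearranges to~\eqref{eq:sol_part}. The only delicate point is the careful bookkeeping of the two boundary terms produced by the Leibniz rule, namely observing that the first-order boundary term vanishes while the second-order one reproduces the forcing $\sp{f_h(t)}{w_h}$, together with the mild regularity of $f_h$ needed to justify differentiation under the integral; everything else reduces, via translation invariance, to the already-established Lemma~\ref{lem:sol_hom}.
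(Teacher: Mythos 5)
Your proof is correct, and it takes a somewhat different route from the paper's. Both arguments share the same skeleton: differentiate $G_n$ twice with the Leibniz rule, use that the boundary term $g_n(t,t)$ vanishes (it is proportional to $\sin(0)$), and identify the second-order boundary term $\partial_t g_n(t,s)\big|_{s=t}=\sp{f_h(t)}{w_h}$ as the forcing. The difference is in how the remaining integral is handled: you observe that for each fixed $s$ the kernel $t\mapsto g_n(t,s)$ is a time-translate of the homogeneous solution \eqref{eq:Fn}, so Lemma~\ref{lem:sol_hom} kills the integral term at once (this is exactly Duhamel's principle), whereas the paper computes $\dot G_n$ and $\ddot G_n$ as explicit integrals of trigonometric expressions and then relies on the cancellation that follows from $\lambda_h^{(n)}=(\omega_h^{(n)})^2+\nu^2/4$. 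Your organization buys two things: it avoids recomputing second derivatives of the kernel, and it dispenses with the hypothesis ``$f(0)=0$'' that the paper's proof invokes --- that hypothesis is only needed to neutralize a spurious boundary term $-g(0,t)$ appearing in the paper's imprecise statement of the Leibniz rule (the lower limit is constant, so no such term arises), and your correct bookkeeping shows no condition on $f_h(0)$ is required. One small point you should patch: Lemma~\ref{lem:sol_hom} is stated for constants $c_n,d_n>0$, while you apply it with $c_n=0$ and $d_n=\sp{f_h(s)}{w_h}$ of arbitrary sign. This is harmless --- the identity \eqref{eq:sol_hom} is linear in $(c_n,d_n)$ and its proof never uses positivity, and translation invariance of the constant-coefficient operator justifies replacing $t$ by $t-s$ --- but a sentence saying so (or a direct two-line verification that $e^{-\nu t/2}\sin(\omega_h^{(n)}t)$ solves the homogeneous equation) would make the appeal to Lemma~\ref{lem:sol_hom} airtight.
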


\begin{proof}
We note that
\begin{gather*}
	\begin{aligned}
		\frac{d}{dt} \int_0^t g(s,t)\, ds &= g(t,t) - g(0,t) 
			+ \int_0^t\frac{\partial}{\partial t} g(s,t)\, ds,\\
		\frac{d^2}{dt^2} \int_0^t g(s,t)\, ds &=
			\frac{d}{dt} \Big(g(t,t)-g(0,t)\Big) 
			+ \frac{\partial}{\partial t} g(s,t)|_{s=t}
			-\frac{\partial}{\partial t} g(s,0)
			+ \int_0^t\frac{\partial^2}{\partial t^2} g(s,t)\, ds.
	\end{aligned}
\end{gather*}
Equation~\eqref{eq:sol_part} follows by choosing $f(0)=0$ and observing that
\begin{gather*}
	\begin{aligned}
	\dot G(t) = &\int_0^t e^{-\nu/2 (t-s)}\Bigg[
	-\frac{\nu}{2\omega_h^{(n)}}\sin(\omega_h^{(n)} (t-s)) + \cos(\omega_h^{(n)} (t-s))
	\Bigg] \sp{f_h(s)}{w_h}\, ds,
	\end{aligned}
\end{gather*}
and
\begin{gather*}
	\begin{aligned}
	\ddot G(t) &= \sp{f_h(t)}{w_h}
	+ \int_0^t e^{-\nu/2 (t-s)} \sp{f_h(s)}{w_h} \\
        &\Bigg[
	-\nu\cos(\omega_h^{(n)} (t-s))+\Bigg(\frac{\nu^2}{4\omega_h^{(n)}}-\omega_h^{(n)}\Bigg)
	\sin(\omega_h^{(n)} (t-s)) 
	\Bigg]\, ds.
	\end{aligned}
\end{gather*}
\end{proof}

\begin{proof}[Proof of Theorem A.1]
Since $\{w_h^{(n)}\}_{n=1}^{N_h}$ is the basis of $W_h$, it is enough to verify that~\eqref{eq:uh_expression} fulfills problem~\eqref{eq:pde_weak_sd} for all test functions $v_h=w_h^{(n)}$, with $n=1,\ldots, N_h$.  
Observe that
\begin{gather*}
	\begin{aligned}
	a_h(u_h(t),w_h^{(n)})&=\sum_{m=1}^{N_h}\gamma_n(t) a_h(w_h^{(m)},w_h^{(n)})\\
	&=\left\{\begin{array}{ll}
	0,&\textrm{ if }n\neq m,\\
	\gamma_n(t)\lambda_h^{(n)} \normL{w_h^{(n)}}^2 = \lambda_h^{(n)}\gamma_n(t), &\textrm{ if }n= m,
	\end{array}\right.
	\end{aligned}
\end{gather*}
and, analogously, $m_h(u_h(t),w_h^{(n)}) = \gamma_n(t)$.
Then,
\begin{align}
	\nonumber
	&m_h(u_{h,tt}(t),w_h^{(n)})+\nu m_h(u_{h,t}(t),w_h^{(n)})+a_h(u_{h}(t),w_h^{(n)})\\
	\nonumber
	&=\frac{d^2}{dt^2} m_h(u_{h}(t),w_h^{(n)}) + \nu \frac{d}{dt} m_h(u_{h}(t),w_h^{(n)}) + a_h(u_{h}(t),w_h^{(n)})\\
	\label{eq:proofa}
	& = \frac{d^2}{dt^2}  \gamma_n(t) + \nu \frac{d}{dt} \gamma_n(t) +  \lambda_h^{(n)}\gamma_n(t).
\end{align}
We conclude that $\eqref{eq:proofa}= \sp{f_h(t)}{w_h}$ by applying Lemma~\ref{lem:sol_hom}, since $\gamma_n(t)=F_n(t)+G_n(t)$, where $F_n(t)$ is of the form~\eqref{eq:Fn} - the constants being fixed so that $u_h(t)$ fulfills the initial conditions $u_h(0)=u_{h,0}$, $u_{h,t}(0)=z_{h,0}$, namely,
\begin{gather*}
	c_n = m_h(u_{h,0}, w_h^{(n)}),\quad
	d_n = \frac{1}{\omega_h^{(n)}} m_h(z_{h,0}, w_h^{(n)}),
\end{gather*}
and by applying Lemma~\ref{lem:sol_part}, since $G_n(t)$ is of the form~\eqref{eq:Gn}-\eqref{eq:gn}.
\end{proof}

\bibliographystyle{plain}

\end{document}